\documentclass[12pt]{article} 
\usepackage{amsmath, amssymb}
\usepackage{amsthm} 
\usepackage{url}
\usepackage{mathrsfs}
\usepackage{comment}
\usepackage{fancyhdr}
\usepackage{enumitem}

\newcommand{\iu}{\mathrm{i}}

\usepackage[
  setpagesize=false,
  bookmarks=true,
  bookmarksdepth=3,
  bookmarksnumbered=true,
  linkcolor=blue,
  citecolor=red,
  colorlinks=true,
  pdftitle={Heat Kernel and Closed Geodesic Asymptotics for Nilpotent Coverings},
  pdfsubject={Heat-kernel and prime-geodesic asymptotics for nilpotent coverings from exact rational fibers},
  pdfauthor={Atsushi Katsuda},
  pdfkeywords={nilpotent coverings, heat kernels, prime closed geodesics, Floquet--Bloch theory, spectral zeta functions}
]{hyperref}

\allowdisplaybreaks

\theoremstyle{plain} 
\newtheorem{theorem}{\indent\sc Theorem}[section]
\newtheorem{lemma}[theorem]{\indent\sc Lemma}

\newtheorem{proposition}[theorem]{\indent\sc Proposition}

\newtheorem{conditions}[theorem]{\indent\sc Conditions}

\theoremstyle{definition} 

\newtheorem{remark}[theorem]{\indent\sc Remark}

\newtheorem{problem}[theorem]{\indent\sc Problem}

\makeatletter
    
    \@addtoreset{equation}{section}

\def\address#1#2{\begingroup
\noindent\parbox[t]{7.8cm}{%
\small{\scshape\ignorespaces#1}\par\vskip1ex
\noindent\small{\itshape E-mail address}%
\/: #2\par\vskip4ex}\hfill%
\endgroup}%
\makeatother

\title{Heat Kernel and Closed Geodesic Asymptotics
for Nilpotent Coverings}

\author{%
\textbf{Atsushi Katsuda}$^{*}$
}
\date{}

\begin{document}
\markright{Heat Kernel and Closed Geodesic Asymptotics}
\pagestyle{myheadings}
\maketitle
\begin{abstract}
We establish all--order long--time asymptotic expansions for heat kernels on
nilpotent coverings and for prime closed geodesics in fixed central classes of
nilpotent quotients of compact hyperbolic surfaces.  The exact lattice-side
input is the finite-dimensional rational Floquet--Bloch theory of the companion
paper: rational Kirillov restrictions give exact finite-dimensional fibers,
and a generalized Pytlik functional gives exact Fourier-inversion and
normalized-trace identities.

At a rational parameter $p/q$ the decomposition is exact, and the fluctuation
of the fiber integrand is controlled only by $q$.  Hence the large-denominator
comparison with the smooth Kirillov or Schr\"odinger normal form is uniform on
the rational support of the Pytlik functional; irrational parameters do not
enter the rigorous trace argument.  For general nilpotent models,
coefficient-weighted spectral sums are justified to every fixed order by
positive Rockland estimates, the Plancherel--Mellin formula, and a trace-level
order-balance argument.

In contrast with approaches which usually give leading terms or integrated
Edgeworth-type asymptotics, the method gives genuinely local, pointwise
higher-order heat-kernel expansions.  The same representation-theoretic
quantity governs the leading term in the closed-geodesic asymptotics, producing
a nilpotent Chebotarev-type phenomenon.  The Heisenberg model is computed to
the first correction term, and the Engel model is represented through the
resolvent and heat-kernel calculus of the quartic oscillator.
\end{abstract}

\footnotetext[1]{
2020 \textit{Mathematics Subject Classification}.
Primary 58J35, 58J65; Secondary 37D40, 22D10, 22E25, 53C22.
}

\footnotetext[2]{
\textit{Key words and phrases}.
heat kernel asymptotics,
prime closed geodesics,
nilpotent coverings,
noncommutative Floquet--Bloch theory,
hypoelliptic operators,
spectral zeta functions,
Chen's iterated integrals.
}

\footnotetext[3]{
$^{*}$This work was supported by JSPS KAKENHI Grant Number JP24K06715
and the Research Institute for Mathematical Sciences,
an International Joint Usage/Research Center
located in Kyoto University.
}

\tableofcontents

\section{Introduction}

This paper studies heat kernels and prime closed geodesics on nilpotent
coverings.  The motivating problem is to extend the asymptotic theory known for
abelian coverings to nonabelian nilpotent deck groups.  In the abelian case,
Floquet--Bloch theory decomposes the analytic problem over the character torus,
and the long-time behavior is read from the perturbation theory of twisted
operators near the trivial character.  In the nilpotent case the same analytic
questions are natural, but the representation-theoretic parameter space is no
longer an ordinary torus.

The companion foundation paper~\cite{KatsudaFB} develops the replacement for
this missing abelian Bloch parameter space.  That theory was developed with the
present analytic problems in mind.  It gives a finite-dimensional rational
skeleton on the lattice side and a smooth Kirillov model on the Malcev-completion
side.  The former supplies the exact trace identities; the latter supplies a
convenient normal form for the perturbative calculations.

Classical Floquet--Bloch theory, recalled for instance in Kuchment~\cite{Kuchment},
works by decomposing periodic analytic problems over the character dual of the
covering group.  For nilpotent coverings this direct framework is not available:
a nonabelian finitely generated nilpotent lattice is non-type I, and its full
unitary dual is not a manageable Bloch parameter space.  The point here is not
to resolve the full dual.  Instead, we use the exact rational decomposition and
finite-dimensional trace formula of the companion paper, and then perform the
asymptotic analysis only after passing to the corresponding fiber operators.

The use of the Lie-group representation should be understood in this way.  The
Schr\"odinger and Kirillov representations provide smooth normal forms for
large-denominator rational fibers, while the exact trace identities remain
finite-dimensional.  At a rational parameter $p/q$ the fiber decomposition is
exact, and the fluctuation of the normalized integrand is controlled by the
denominator $q$, independently of the numerator and of the auxiliary Bloch
variables.  Bounded-denominator sets have zero Pytlik mass.  Consequently the
large-denominator comparison passes directly to the exact trace functional, and
irrational parameters play no role in the rigorous argument.  The geometric
rational functional evaluates the continuous model observables by ordinary
Kirillov--Fujiwara integration.  On the model side, the coefficient-weighted
spectral sums are justified below to every fixed order by positive Rockland
estimates, the Plancherel--Mellin formula, and a trace-level order-balance
argument.

The Harper operator is included as a model illustration rather than as an input
to the proofs.  In the rational magnetic case, the familiar finite-dimensional
Harper matrices are exactly the Heisenberg finite-dimensional fibers.  The
Wilkinson expansion is therefore an asymptotic statement inside exact finite
fibers, not a justification for their existence.  The detailed Harper discussion
is placed in Appendix~\ref{Anotherproofwilkinson}.

\medskip
\noindent
We use throughout the standard smooth-vector convention for unitary
representations of Lie groups.  If \(\pi:G\to U(\mathcal H)\) is a strongly
continuous unitary representation, then \(\mathcal H^\infty\) denotes the dense
space of smooth vectors, and all Lie-algebraic normal-form calculations are
performed on this common smooth domain.  In the Heisenberg case the notation
\(L^2(\mathbb R)\) always means the Schr\"odinger Hilbert space after the
central character has been fixed; equivalently, the center acts by scalars and
is not an additional \(L^2\)-variable.  More intrinsically, this is the induced
model attached to a polarization, not the regular space \(L^2(G)\).

\medskip
\noindent
We state the two principal results first.  The exact rational
representation-theoretic input and the analytic transfer and summability
statements used in their coefficient formulae are recorded immediately
afterward in Subsections~\ref{repinput} and
\ref{analytic-bridge-hypotheses}.

\subsection{Long--time asymptotics of heat kernels on covering manifolds}
\label{IntroHeat}

Let \(M\) be a compact Riemannian manifold, or a finite connected
non-bipartite graph, and let
\[
\pi:X\longrightarrow M
\]
be a normal covering with deck transformation group \(\Gamma\).
We denote by \(k_X(t,p,q)\) the heat kernel on \(X\), or the
transition probability of the simple random walk in the graph case.

\begin{problem}
Determine the asymptotic behavior of \(k_X(t,p,q)\) as \(t\to\infty\).
\end{problem}

When \(\Gamma\) is abelian, this problem has been studied
extensively.  Kotani and Sunada~\cite{KotaniSunada1} established full
long--time asymptotic expansions by combining classical Bloch theory
with geometric analysis of the Albanese torus.

\begin{theorem}[Kotani--Sunada]\label{abel-heat}
If \(\Gamma\) is free abelian of rank \(b\), then the heat kernel
admits a full asymptotic expansion
\[
k_X(t,p,q)
\sim
\frac{C_{\rm ab}(p,q)}{t^{b/2}}
\left(1+\frac{c_1(p,q)}{t}+\frac{c_2(p,q)}{t^2}+\cdots\right),
\qquad t\to\infty .
\]
With the Kotani--Sunada normalization, the diagonal leading constant is
\[
C_{\rm ab}(p,p)
=
\frac{\operatorname{vol}(M)^{b/2}
      \operatorname{vol}(\operatorname{Alb}^{\Gamma})^{b/2}}
     {(4\pi)^{b/2}\,\#\operatorname{Tor}(\Gamma)} .
\]
Here \(\operatorname{Alb}^{\Gamma}\) is the Albanese torus associated
with the covering, equipped with the Albanese metric.  The off-diagonal
constant and the higher coefficients are also expressed by the Albanese
geometry and the heat-kernel perturbation data.
\end{theorem}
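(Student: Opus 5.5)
The plan is to follow the classical Floquet--Bloch route. For simplicity, reduce first to the case where \(\Gamma\) is torsion free, \(\Gamma\cong\mathbb Z^b\). A finite torsion part is removed by passing to the intermediate covering \(X/\Gamma_{\rm free}\to M\); this contributes the factor \(1/\#\operatorname{Tor}(\Gamma)\) through the volume of the base.

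For each character \(\chi\in\widehat\Gamma\cong T^b\), let \(L_\chi\) be the flat line bundle and \(\Delta_\chi\) the twisted Laplacian on \(M\). Bloch inversion gives
\[
k_X(t,p,q)=\int_{\widehat\Gamma}k_\chi(t,\pi p,\pi q)\,d\chi ,
\]
where \(k_\chi\) is the kernel of \(e^{-t\Delta_\chi}\), lifted through the identification of the fibers at \(p\) and \(q\). In the graph case one uses the twisted transition operators instead; non-bipartiteness excludes the eigenvalue \(-1\) at the trivial character.

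Next I localize the integral. \(\Delta_\chi\) has kernel only at \(\chi=1\), and \(\inf\operatorname{spec}\Delta_\chi\ge c>0\) for \(\chi\) outside a neighborhood \(U\) of \(1\). So the complement of \(U\) contributes \(O(e^{-ct})\).

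On \(U\), write \(\chi=e^{2\pi i\langle\theta,\cdot\rangle}\) with \(\theta\in H^1(M,\mathbb R)\) small, and gauge the twist into the operator \(\Delta_\theta=e^{-i\omega_\theta}\Delta e^{i\omega_\theta}\), where \(\omega_\theta\) is the harmonic 1-form representing \(\theta\). Kato--Rellich analytic perturbation theory then gives a simple analytic eigenvalue \(\lambda(\theta)\) with normalized eigenfunction \(\phi_\theta\). One has \(\lambda(0)=0\), \(\phi_0=\operatorname{vol}(M)^{-1/2}\), \(d\lambda(0)=0\), and Hessian \(\operatorname{Hess}\lambda(0)=4\pi^2\langle\cdot,\cdot\rangle_{\rm Alb}\cdot(\text{normalization})\). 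This Hessian computation is second-order perturbation: the first-order term vanishes because harmonic forms are \(L^2\)-orthogonal to exact ones, and the second-order term is \(\|\omega_\theta\|^2_{L^2}/\operatorname{vol}(M)\), which is the Albanese metric. The remaining spectrum of \(\Delta_\theta\) is bounded below on \(U\), so
\[
k_X(t,p,q)=\int_U e^{-t\lambda(\theta)}\phi_\theta(x)\overline{\phi_\theta(y)}\,e^{2\pi i\langle\theta,\,\text{Albanese displacement}\rangle}\,d\theta+O(e^{-ct}).
\]

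Finally I expand the integral by Laplace's method. Substituting \(\theta=\xi/\sqrt t\), I expand \(t\lambda(\xi/\sqrt t)=Q(\xi)+\sum_{k\ge3}t^{1-k/2}\lambda_k(\xi)\) and Taylor expand the amplitude. Gaussian integration then produces an expansion in powers of \(t^{-1/2}\), with leading term
\[
\operatorname{vol}(M)^{-1}(\det Q)^{-1/2}\pi^{b/2}\,t^{-b/2}.
\]
Matching the normalization of the Albanese volume with \(\det Q\) gives the stated constant \(C_{\rm ab}(p,p)\).

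For the diagonal case the odd half-integer powers vanish by parity. Time reversal gives \(\Delta_{-\theta}=\overline{\Delta_\theta}\), so \(\lambda\) is even and \(\phi_{-\theta}=\overline{\phi_\theta}\). Hence the amplitude \(|\phi_\theta(x)|^2\) is even, every odd-degree Taylor term integrates to zero against the Gaussian, and only integer powers \(t^{-b/2-j}\) remain, as stated. Off the diagonal, the linear phase is handled by the same even/odd bookkeeping after centering, with the displacement absorbed into the coefficients \(c_j(p,q)\).

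I expect the main obstacle to be the exact identification of the leading constant. This means tracking the Haar measure on \(\widehat\Gamma\) against Lebesgue measure on \(H^1\), relating the lattice covolume to \(\operatorname{vol}(\operatorname{Alb}^\Gamma)\), and fixing the Hessian normalization so that the powers \(\operatorname{vol}(M)^{b/2}\operatorname{vol}(\operatorname{Alb})^{b/2}\) come out as in Kotani--Sunada's convention. I would check this bookkeeping against the flat torus \(M=\mathbb R^b/\mathbb Z^b\), \(X=\mathbb R^b\), where \(k=(4\pi t)^{-b/2}\).
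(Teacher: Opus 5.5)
Your route is the one the paper sketches for the abelian case in Subsection~\ref{Infiniteabelian}. It consists of Bloch inversion over $\widehat\Gamma$, the harmonic gauge putting every twisted operator on $L^2(M)$, a simple analytic bottom eigenvalue with Hessian \eqref{outline-abelian-hessian}, and a Laplace expansion. Rescaling $\theta=\xi/\sqrt t$ is equivalent to the Morse coordinates. Your parity argument is sound and works verbatim off the diagonal for fixed $p,q$, with no centering: the full amplitude $a(\theta)=e^{2\pi\iu\int_q^p\omega_\theta}\phi_\theta(x)\overline{\phi_\theta(y)}$ satisfies $a(-\theta)=\overline{a(\theta)}$, and $\lambda$ is even.

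\emph{First gap: the leading constant.} Carry your own computation to the end, with $Q=4\pi^2G/\operatorname{vol}(M)$ and $G_{ij}=(\omega_i,\omega_j)_{L^2(M)}$ for the harmonic forms dual to a basis of $\Gamma$. It gives
\[
C_{\rm ab}(p,p)=(4\pi)^{-b/2}\operatorname{vol}(M)^{b/2-1}(\det G)^{-1/2}.
\]
This equals $\operatorname{vol}(\operatorname{Alb}^{\Gamma})/\bigl((4\pi)^{b/2}\operatorname{vol}(M)\bigr)$ when the Albanese metric is dual to $\operatorname{vol}(M)^{-1}\int_M\langle\omega,\eta\rangle\,dv_g$.

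Your sentence ``matching the normalization gives the stated constant'' is not a proof, and the flat-torus check cannot supply one. On $\mathbb R^b/\mathbb Z^b$ one has $\operatorname{vol}(M)=\det G=1$, so every candidate normalization returns $(4\pi)^{-b/2}$. On any flat torus $\det G=\operatorname{vol}(M)^{b-2}$, so the $\operatorname{vol}(M)$- and $\det G$-dependence cannot be separated.

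Test instead on $M=(\mathbb R^b/\mathbb Z^b)\times N$ with $X=\mathbb R^b\times N$, where $k_X(t,p,p)\sim(4\pi t)^{-b/2}\operatorname{vol}(N)^{-1}$. This confirms the formula above. It also shows that the literal expression $\operatorname{vol}(M)^{b/2}\operatorname{vol}(\operatorname{Alb}^{\Gamma})^{b/2}/(4\pi)^{b/2}$ is not reproduced under any usual convention: $L^2$ Gram matrix or its inverse, with or without the factor $\operatorname{vol}(M)^{-1}$. The last step must be an explicit computation that fixes the convention, not an appeal to matching.

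\emph{Second gap: the graph case.} Non-bipartiteness of $M$ excludes $-1$ from $\operatorname{spec}P_\chi$ only for $\chi$ near the trivial character. Your localization needs $\operatorname{spec}P_\chi\subset[-1+c,\,1-c]$ for every $\chi$ outside $U$. But $-1\in\operatorname{spec}P_\chi$ for some $\chi$ exactly when the covering graph $X$ is bipartite. The offending $\chi$ is the order-two character recording whether a deck transformation swaps the colour classes; it is nontrivial when $M$ is non-bipartite.

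This really occurs. The triangle is non-bipartite, but its $\mathbb Z$-covering is the line. There $p_n(x,x)=0$ for odd $n$, so no expansion of the stated form holds. You need $X$ non-bipartite, or else you must account for a second critical character contributing a $(-1)^n$ term.
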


The leading-order asymptotics in the abelian case trace back to
earlier works of Guivarc'h~\cite{Guivarch}, Jacod~\cite{Jacod},
Kr\'amli--Sz\'asz~\cite{Kramli}, and Sinai~\cite{Sinai}, and the
geometric origin of the leading coefficient was clarified by Kotani,
Shirai, and Sunada~\cite{KotaniShiraiSunada}.  These results rely
essentially on classical Bloch theory and on the commutative nature of
the covering group.

Beyond the abelian setting, the situation changes substantially.
Leading-order results for random walks and heat kernels associated
with nilpotent groups were obtained by
Alexopoulos~\cite{Alexopoulos,Alexopoulos1,Alexopoulos2,Alexopoulos3}
and Ishiwata~\cite{Ishiwata} via comparison with heat kernels on
stratified nilpotent Lie groups.  Related local limit and Edgeworth-type
results include work of Cr\'epel--Raugi, Raugi, Breuillard, Hough,
Ishiwata--Kawabi--Namba, and Namba~\cite{Crepel,Raugi,Breuillard,Hough,Ishiwata4,Ishiwata5,Namba};
for general heat-kernel background see also Saloff-Coste~\cite{Saloff}.  These works determine the leading
asymptotic behavior, both on and off the diagonal, through the
corresponding Lie group heat kernel.  However, a systematic framework
for obtaining genuinely local higher-order asymptotic expansions has
not been available in general.  Even for nilpotent Lie groups
themselves, local expansions beyond the leading term as \(t\to\infty\)
are not known in full generality, although integrated
Edgeworth-type expansions exist; see, for example, Pap~\cite{Pap}.

We now state the corresponding result for coverings with nilpotent
deck transformation groups.  Throughout, \(\Gamma\) denotes a
finitely generated torsion-free nilpotent group, \(G\) its Malcev
completion, \(H=\Gamma/[\Gamma,\Gamma]\) its abelianization,
\(b=\mathrm{rank}\,H\), and \(d\) the polynomial growth degree of
\(\Gamma\).

\begin{theorem}[Nilpotent heat-kernel expansion]
\label{conj-heat}
Let \(\Gamma\) be a finitely generated torsion-free nilpotent group, let
\(G\) be its Malcev completion, and let \(d\) be its polynomial growth degree.
For every \(N\geq0\),
\begin{equation}
 k_X(t,p,q)
 =t^{-d/2}\sum_{j=0}^{N}C_j(p,q)t^{-j/2}
 +o\bigl(t^{-d/2-N/2}\bigr).
\label{heat-expansion-N}
\end{equation}
Thus the heat kernel has an asymptotic expansion to all orders in the usual
sense that the expansion may be truncated after any prescribed finite order.
If the conjugation/parity symmetry of the localized model eliminates the odd
half-orders, only integral powers of \(t^{-1}\) occur.

With the notation introduced in Section~\ref{leadingnilpotent}, the leading
coefficient is
\[
C_0(p,q)
=
\Gamma(d/2)\operatorname{vol}(M)^{d/2}
\int_{\Theta}J(\theta)
\sum_i a_{0,i}(p,q;\theta)\,\mu_i(\theta)^{-d/2}\,d\theta .
\]
Here \(J(\theta)\) is the exact model density from
Proposition~\ref{model-density}, \(\mu_i(\theta)\) are the eigenvalues of the
positive regular model operator, and \(a_{0,i}\) are the leading local
amplitudes.  The sum and angular integral converge absolutely by
Theorem~\ref{all-order-model-summability}.  If the normalized model and
amplitude are angularly independent, the expression reduces to the
corresponding scalar multiple of \(\zeta_{\mathcal H}(d/2)\).
\end{theorem}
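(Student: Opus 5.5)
The plan is to follow the route outlined in the introduction: exact rational Floquet--Bloch decomposition, large-denominator comparison with the smooth model, and Mellin summation on the model side.

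\emph{Step 1: exact trace formula.} Using the rational Kirillov restrictions and the generalized Pytlik functional of the companion paper~\cite{KatsudaFB}, I would write
\[
k_X(t,p,q)=\int \operatorname{tr}\bigl(e^{-t\Delta_{p/q,\xi}}K_{p,q}\bigr)\,d\mathcal P(p/q,\xi),
\]
an exact integral over rational parameters $p/q$ and auxiliary Bloch variables $\xi$. Here $\Delta_{p/q,\xi}$ are the finite-dimensional twisted fiber operators on $M$, and $K_{p,q}$ is the evaluation or amplitude operator at $p,q$. Bounded-denominator sets have Pytlik mass zero. Moreover, away from a shrinking neighbourhood of the trivial parameter, each fiber operator has a spectral gap. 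Hence that region contributes $O(e^{-ct})$, which is negligible at every polynomial order.

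\emph{Step 2: localization and normal form.} Near the trivial parameter I would apply Kato-type perturbation theory to the bottom eigenvalue of $\Delta$. This produces a projected effective operator on the fiber. After the anisotropic rescaling by the Malcev dilations, of total homogeneous degree $d$, this effective operator becomes
\[
t\,\vol(M)^{-1}\bigl(\mathcal L_\theta+t^{-1/2}R_1+\dots+t^{-N/2}R_N+\text{remainder}\bigr).
\]
Here $\mathcal L_\theta$ is the positive Rockland model operator in the Kirillov/Schr\"odinger representation with angular parameter $\theta$, and the $R_j$ are homogeneous correction terms. The key estimate, taken from the paper's transfer statements, is that at a rational parameter $p/q$ the difference between the exact fiber integrand and the model integrand depends only on $q$. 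It is controlled by an inverse power of $q$ uniformly in the numerator and in $\xi$, so it can be integrated directly against the Pytlik measure.

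\emph{Step 3: expansion and Mellin summation.} I would expand $e^{-t(\cdot)}$ by Duhamel/Dyson iteration in the $R_j$. The $j$-th term is a coefficient-weighted sum of the form
\[
\sum_i a_{j,i}\int_0^\infty s^{\,d/2+j/2-1}e^{-s\mu_i(\theta)}\,ds,
\]
taken after integrating over the radial variable with density $J(\theta)$ from Proposition~\ref{model-density}. For $j=0$ the radial integral gives $\Gamma(d/2)\mu_i^{-d/2}$, and the factor $\vol(M)^{d/2}$ comes from the rescaling. This reproduces the stated formula for $C_0$. Absolute convergence of the sums over $i$ and $\theta$ at each fixed order is supplied by Theorem~\ref{all-order-model-summability}, which rests on the Rockland estimates, the Plancherel--Mellin formula and the order-balance argument. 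Terms that are odd under the conjugation/parity symmetry integrate to zero, which removes the half-integer orders.

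\emph{Main obstacle.} I expect the hardest part to be showing that the remainder after $N$ terms is genuinely $o(t^{-d/2-N/2})$. This requires a single bound that covers both the perturbative truncation error in the model, with trace-class control of the iterated Duhamel terms, and the rational-versus-smooth discrepancy. My first approach would be dominated convergence in the rescaled variables. On the rational side, the $q$-only fluctuation bound together with zero mass at bounded $q$ gives the dominating function. On the model side, the Rockland estimates give uniform trace bounds for $e^{-s\mathcal L_\theta}$ times polynomial weights.
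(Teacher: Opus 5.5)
Your architecture is the paper's: Pytlik inversion on the rational finite-dimensional fibers, zero mass of bounded height, Sunada localization, rational large-denominator transfer to the Kirillov normal form, the density $J(\theta)r^{d/2-1}\,dr\,d\theta$ of Proposition~\ref{model-density}, the Laplace integral producing $\Gamma(d/2)\operatorname{vol}(M)^{d/2}\mu_i(\theta)^{-d/2}$, and Theorem~\ref{all-order-model-summability} for the coefficient sums.

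\textbf{The gap: dominated convergence on the rational side.} Your remainder argument proposes dominated convergence, with the dominating function supplied ``on the rational side''. This step fails, because $\Lambda$ is only a finitely additive positive functional that annihilates every bounded-height set. Limit theorems are therefore unavailable. For example, $\mathbf 1_{\{{\rm ht}>n\}}\to0$ pointwise with bound $1$, yet $\Lambda(\mathbf 1_{\{{\rm ht}>n\}})=1$ for every $n$.

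In your setting this is not a technicality. By Sunada's inequality, every fixed nontrivial finite-dimensional fiber has a positive spectral gap. So the fiber heat observable tends to $0$ exponentially at each fixed $\rho$, while $\Lambda$ of it is of order $t^{-d/2}$. No pointwise-plus-majorant argument in the parameter $\rho$ can recover the answer. Likewise, ``integrating the $q$-bound against the Pytlik measure'' is meaningful only through positivity and the vanishing of bounded-height mass.

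\textbf{How the paper avoids it.} The paper never passes a limit through $\Lambda$. At each fixed $t$ it uses only four facts:
\begin{itemize}
\item the uniform estimate $\sup_{\operatorname{den}(\rho)>Q}|F^{\rm fin}_N(\rho)-F^{\rm mod}_N(\rho)|\le\varepsilon_N(Q)$;
\item positivity of $\Lambda$;
\item $\|\Lambda\|=1$;
\item $\Lambda(\mathbf 1_{\{{\rm ht}\le Q\}})=0$.
\end{itemize}
This is Lemma~\ref{high-height-transfer-lemma}. Since $Q$ is arbitrary, it gives the \emph{exact} identity $\Lambda(F^{\rm fin}_N)=\Lambda(F^{\rm mod}_N)$ of Proposition~\ref{rational-spectral-transfer}.

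Proposition~\ref{model-density} then converts $\Lambda(F^{\rm mod}_N)$ into an ordinary Lebesgue integral in $(r,\theta)$. Only after that does the paper carry out the substitution $u=tr$, the Volterra expansion and the $t\to\infty$ remainder analysis. At that stage Tonelli and dominated convergence are legitimate. The majorant is the order-balance bound $\|T_{N,\theta}(1+A_\theta)^{d/2}\|\le C_N$, combined with the angular integrability of $\operatorname{Tr}(1+A_\theta)^{-d/2}$ from the Plancherel--Mellin identity.

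So the single bound controlling both errors at once, which you identify as the main obstacle, is not needed. The rational-versus-smooth discrepancy is removed exactly for each $t$, and the truncation error is a purely model-side problem.

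\textbf{Two smaller corrections.}
\begin{itemize}
\item The rescaled operator should read $t\,\mathscr L_\theta(r)=uA_\theta+\sum_{n\ge3}t^{1-n/2}u^{n/2}B_{n,\theta}$ with $u=tr$. The powers of $t^{-1/2}$ come attached to powers of $u$, and these produce the extra Mellin weight $u^{\beta}$. They do not come from a prefactor in $t$.
\item The $j$-th coefficient is not a diagonal sum $\sum_i a_{j,i}\int_0^\infty s^{(d+j)/2-1}e^{-s\mu_i}\,ds$ with harmless scalars. The insertions $B_{n,\theta}$ are off-diagonal and of $A$-order $n$ (Lemma~\ref{relative-order-lemma}), so your would-be $a_{j,i}$ grow with $\mu_i$. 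This is why the paper works with trace-level Volterra terms. There the Mellin power lowers the $A$-order by $d+2\beta$ while the insertions raise it by at most $2\beta$, and no eigenvalue branches need to be chosen at multiplicities.
\end{itemize}
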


The proof keeps the exact lattice trace entirely on the rational
finite-dimensional support.  Proposition~\ref{rational-spectral-transfer}
identifies its high-denominator observables with the smooth normal form,
Proposition~\ref{model-density} evaluates the resulting continuous model
observable, and Theorem~\ref{all-order-model-summability} justifies all
coefficient-weighted spectral interchanges.

\subsection{Geometric analogue of the Chebotarev density theorem}
\label{IntroChebotarev}

Let \(M\) be a compact Riemann surface of genus \(g\) with constant
negative curvature \(-1\).  Let us first formulate the closed-geodesic
problem precisely.  Every nontrivial free homotopy class of oriented closed
curves on \(M\) contains a unique closed geodesic, and free homotopy classes
are naturally identified with conjugacy classes in \(\pi_1(M)\).  In
particular, prime closed geodesics correspond to primitive conjugacy classes.

Let
\[
\Phi:\pi_1(M)\twoheadrightarrow\Gamma
\]
be a surjective homomorphism onto a finitely generated discrete group
\(\Gamma\).  For a conjugacy class \(\alpha\) in \(\Gamma\), let
\(\pi(x,\Phi,\alpha)\) denote the number of prime closed geodesics \(\gamma\)
on \(M\) such that
\[
\ell(\gamma)\leq x
\qquad\text{and}\qquad
\Phi([\gamma])\subset\alpha,
\]
where \([\gamma]\) denotes the conjugacy class in \(\pi_1(M)\)
corresponding to \(\gamma\).  Equivalently, the image under \(\Phi\) of one,
and hence every, representative of \([\gamma]\) belongs to \(\alpha\).
When \(\Phi\) is the canonical projection onto a fixed quotient \(\Gamma\)
and no confusion can arise, we write simply
\(\pi(x,\alpha)=\pi(x,\Phi,\alpha)\).

\begin{problem}
Determine the asymptotic behavior of \(\pi(x,\Phi,\alpha)\) as
\(x\to\infty\).
\end{problem}

As in the heat-kernel problem, the essential input for our nilpotent answer is
the representation-theoretic Floquet--Bloch decomposition coming from exact
rational finite-dimensionalization.  No new structural ingredient is
required; the same canonical hypoelliptic operator governs the leading
constants.

The abelian benchmark is obtained from the canonical abelianization map
\[
\Phi_{\rm ab}:\pi_1(M)\twoheadrightarrow H_1(M,\mathbb Z).
\]
Since the target is abelian, each conjugacy class is a singleton.  Thus, for
\(\alpha\in H_1(M,\mathbb Z)\), the notation \(\pi(x,\alpha)\) counts the
prime closed geodesics \(\gamma\) with \(\ell(\gamma)\leq x\) and homology
class \([\gamma]=\alpha\).

\begin{theorem}[Phillips--Sarnak~\cite{Phillips}]
\label{abel-geodesic}
For every fixed \(\alpha\in H_1(M,\mathbb Z)\), the counting function
\(\pi(x,\alpha)\) admits a full asymptotic expansion as \(x\to\infty\).
In particular,
\[
\pi(x,\alpha)
\sim
\frac{C e^x}{x^{1+b/2}},
\qquad x\to\infty,
\]
where \(b=\mathrm{rank}\,H_1(M,\mathbb Z)=2g\), and
\[
C
=
\left(\frac{\mathrm{vol}(M)}{2\pi}\right)^{b/2}
\frac{1}{\mathrm{vol}(J(M))}
=
(g-1)^g.
\]
Here \(J(M)\) denotes the Jacobi torus.  The leading coefficient is
independent of \(\alpha\).
\end{theorem}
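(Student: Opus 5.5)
The plan is the classical Phillips--Sarnak route: translate geodesic counting in a fixed homology class into twisted Selberg zeta functions parametrized by the character torus $\widehat{H}=H^1(M,\mathbb R)/H^1(M,\mathbb Z)\cong\mathbb T^{2g}$, then extract the asymptotics from the behaviour of the bottom eigenvalue near the trivial character. For $\chi\in\widehat{H}$ let $\Delta_\chi$ be the Laplacian on sections of the flat unitary line bundle attached to $\chi$, with bottom eigenvalue $\lambda_0(\chi)$. By orthogonality of characters,
\[
\pi(x,\alpha)=\int_{\widehat{H}}\overline{\chi(\alpha)}\,\pi_\chi(x)\,d\chi,
\qquad
\pi_\chi(x)=\sum_{\ell(\gamma)\le x}\chi([\gamma]).
\]
For each $\chi$, the twisted Selberg trace formula and the zeros of the twisted zeta function $Z(s,\chi)$ give a twisted prime geodesic theorem: $\pi_\chi(x)=\operatorname{li}(x^{s_0(\chi)})+O(x^{\sigma})$ in the variable $x=e^{\ell}$, with $s_0(\chi)(1-s_0(\chi))=\lambda_0(\chi)$. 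Here $\sigma<1$ is uniform once $\chi$ leaves a small neighbourhood $U$ of the trivial character, because $\lambda_0(\chi)>0$ for $\chi\neq1$ and the spectrum varies continuously. Away from $U$ one then only gets exponentially smaller contributions $O(e^{\sigma x})$ in the length variable.

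Near $\chi=1$, I will use analytic perturbation theory. Write $\chi=e^{2\pi\iu\langle\omega,\cdot\rangle}$ with $\omega$ a harmonic $1$-form; then $\lambda_0(\omega)=\|\omega\|^2_{L^2}/\mathrm{vol}(M)\cdot4\pi^2+O(|\omega|^4)$, and $\lambda_0$ is real analytic and even. Hence $s_0(\omega)=1-c\,Q(\omega)+\dots$, where $Q$ is the $L^2$ (Albanese/Jacobi) quadratic form. Inserting this into the integral gives
\[
\int_U e^{x s_0(\omega)}\bigl(x\,s_0(\omega)\bigr)^{-1}(1+\dots)\,d\omega .
\]
Laplace's method then yields $e^x x^{-1}\cdot x^{-b/2}$ times a Gaussian constant $\bigl(\det Q\bigr)^{-1/2}(\pi/c)^{b/2}$. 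The higher Taylor coefficients of $s_0$ and of the $\operatorname{li}$ expansion produce the full expansion in powers of $x^{-1}$. The twist factor $\overline{\chi(\alpha)}=e^{-2\pi\iu\langle\omega,\alpha\rangle}$ is $1+O(|\omega|)$, so it affects only the lower-order terms; this is why the leading constant is independent of $\alpha$.

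The step I expect to be the main obstacle is uniformity. The $\operatorname{li}$-remainder for $\pi_\chi$ has to be uniform in $\chi\in U$, including a uniform gap between $s_0(\chi)$ and the rest of the spectrum, so that integration over $\chi$ commutes with the asymptotics. I would handle this with the explicit-formula contour argument, integrating $Z'/Z(s,\chi)$ against a smoothed kernel with bounds on $Z(s,\chi)$ that are uniform in $\chi$. The uniform bounds come from the trace formula together with compactness of $\widehat H$. Finally, the constant is evaluated as $(\mathrm{vol}(M)/2\pi)^{g}/\mathrm{vol}(J(M))$, using that the Jacobi torus has covolume given by $\det Q$ in the chosen normalization. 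With $\mathrm{vol}(M)=4\pi(g-1)$ and $\mathrm{vol}(J(M))=1$ this gives $(g-1)^g$.
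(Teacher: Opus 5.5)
\textbf{The gap: the constant.} Your outline is viable, but the final constant does not follow from your own computation. You take $\lambda_0=\frac{4\pi^2}{\mathrm{vol}(M)}\|\omega\|_{L^2}^2+O(|\omega|^4)$ and $s_0=1-\lambda_0+O(\lambda_0^2)$, so $c=4\pi^2/\mathrm{vol}(M)$ and $Q=\|\cdot\|_{L^2}^2$. Your Gaussian constant is then
$(\pi/c)^{g}(\det Q)^{-1/2}=(\mathrm{vol}(M)/4\pi)^{g}\,\mathrm{vol}(J(M))^{-1}$,
where $\mathrm{vol}(J(M))=(\det Q)^{1/2}$, not $\det Q$, is computed in an integral basis. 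In this $L^2$ normalization $\mathrm{vol}(J(M))=1$. The reason is that the Gram matrix is the unimodular cup-product matrix times the matrix of the Hodge star, which is a complex structure on $H^1(M,\mathbb R)$ and so has determinant $1$. This correctly gives $(g-1)^g$.

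The expression you quote, $(\mathrm{vol}(M)/2\pi)^{g}/\mathrm{vol}(J(M))$, equals $(2(g-1))^{g}$ when $\mathrm{vol}(J(M))=1$. It equals $(g-1)^g$ only if the Jacobi torus carries the metric $2Q$, in which case $\mathrm{vol}(J(M))=2^{g}$. An example is $\iu\int_M\phi\wedge\bar\phi=2\int_M\omega\wedge *\omega$ for $\phi=\omega+\iu *\omega$. As written, your last sentence asserts $2^g(g-1)^g=(g-1)^g$. Fix one normalization of the Jacobi metric and carry it through the Laplace computation.

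\textbf{Comparison of routes.} Apart from this, your route differs from the one the paper relies on. The paper cites Phillips--Sarnak for this theorem, and in Section~\ref{Asymptoticsclosedgeodesics} it runs Phillips' method for the nilpotent analogue:
\begin{enumerate}
\item The twisted Selberg formula, with test function the indicator of $[-T,T]$ convolved with $k_\varepsilon$, is first averaged over the dual against $\overline{\chi(\alpha)}$. The geometric side then isolates the class $\alpha$ directly.
\item The spectral side is localized near the trivial character by Sunada's inequality and evaluated by Laplace's method.
\item Choosing $\varepsilon=e^{-\delta T}$ and partially summing the weighted sum $\sum\ell(\gamma)/(2\sinh(\ell(\gamma)/2))$ gives $\pi(T,\alpha)$.
\end{enumerate}

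You instead prove a twisted prime geodesic theorem for each $\chi$ and integrate afterwards. This works, but all the difficulty moves into making the remainder uniform in $\chi$, which via $Z'/Z(s,\chi)$ needs uniform bounds on the twisted zeta functions. Averaging first needs only a single smoothed trace formula, whose spectral side is controlled by the uniform gap and uniform spectral counting. It is also the form the paper adapts to the nilpotent case, where the dual is replaced by a finitely additive functional. Your route gives the per-character asymptotics as a by-product, and the $\alpha$-independence of the leading term is transparent in it.

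Finally, state the twisted theorem in the length variable, $\pi_\chi(T)=\operatorname{li}(e^{s_0(\chi)T})+O(e^{\sigma T})$, to avoid the clash with $x$ in the statement.
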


The leading term was also obtained independently by Katsuda and
Sunada~\cite{Katsuda1}.

Our nilpotent analogue is the following.  For \(\alpha\in Z(\Gamma)\),
the conjugacy class of \(\alpha\) is the singleton \(\{\alpha\}\), and we use
the same symbol for the element and its conjugacy class.

\begin{theorem}[Nilpotent Chebotarev-type expansion]
\label{theorem-geod}
Let \(M\) be a compact Riemann surface of constant curvature \(-1\), let
\(\Phi:\pi_1(M)\twoheadrightarrow\Gamma\) be a nilpotent quotient with
\(\Gamma\) finitely generated and torsion-free, and let
\(\alpha\in Z(\Gamma)\).  For every \(N\geq0\),
\begin{equation}
 \pi(T,\Phi,\alpha)
 =\frac{e^T}{T^{1+d/2}}
   \left(\sum_{j=0}^{N}C_j^{\rm geo}(\alpha)T^{-j/2}
   +o(T^{-N/2})\right).
\label{geodesic-expansion-N}
\end{equation}
Hence the fixed-central-class counting function has an all--order asymptotic
expansion.  If the parity symmetry eliminates the odd half-orders, only
integral powers of \(T^{-1}\) occur.  The leading coefficient is
\[
C_0^{\rm geo}(\alpha)
=
\frac12 A_{\rm Sel}\,\Gamma(d/2)\operatorname{vol}(M)^{d/2}
\int_{\Theta}J(\theta)
\sum_i a^{\rm geo}_{0,i}(\alpha;\theta)
\mu_i(\theta)^{-d/2}\,d\theta .
\]
The amplitude is the lowest-branch Selberg amplitude after the central element
has been extracted by normalized finite-dimensional traces.  The sum and
angular integral converge absolutely by
Theorem~\ref{all-order-model-summability}.  In the isotropic case this is the
corresponding scalar multiple of \(\zeta_{\mathcal H}(d/2)\).
\end{theorem}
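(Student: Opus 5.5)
We follow the Phillips--Sarnak route: Selberg trace formula for $M$ twisted by the unitary representations of $\Gamma$ that isolate the central class $\alpha$, combined with the exact rational fiber decomposition of the companion paper. Since $\alpha\in Z(\Gamma)$, the indicator of $\{\alpha\}$ can be written via the generalized Pytlik functional as an integral over rational parameters $p/q$ of normalized finite-dimensional traces $\frac1{\dim\rho}\operatorname{tr}\rho(\alpha)^{-1}\operatorname{tr}\rho(\cdot)$. For each finite-dimensional fiber $\rho=\rho_{p/q,\xi}$ (with Bloch variables $\xi$) we have a twisted Selberg zeta function $Z(s,\rho\circ\Phi)$. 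Since $\alpha$ is central, $\rho(\alpha)$ acts by a scalar on each irreducible fiber, so the normalized trace extraction is exact. The first step is then an exact identity
\[
\psi(T,\alpha)=\int^{\rm Pytlik}\frac{\overline{\chi_\rho(\alpha)}}{\dim\rho}\,\psi(T,\rho\circ\Phi)\,d\rho ,
\]
where $\psi(T,\rho)=\sum_{\ell(\gamma^k)\le T}\ell(\gamma)\operatorname{tr}\rho(\gamma^k)$ is the twisted Chebyshev function. We work with $\psi$, or with a smoothed version $e^{-s\ell}$, and recover $\pi(T,\Phi,\alpha)$ at the end by partial summation, discarding the contribution of non-primitive classes, which is $O(e^{T/2})$.

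Second, we localize near the trivial representation. Away from it, the bottom eigenvalue of the twisted Laplacian $\Delta_\rho$ stays bounded below $1/4$, uniformly in $\rho$. This follows from the rational spectral transfer (Proposition~\ref{rational-spectral-transfer}) together with the fact that bounded-denominator sets have zero Pytlik mass. Consequently the corresponding contribution is $O(e^{(1-\delta)T})$. Near the trivial representation, the lowest branch of the twisted Laplacian is $\lambda_0(\rho)=s_0(\rho)(1-s_0(\rho))$, with $s_0$ close to $1$. In the large-denominator regime, the perturbation $\lambda_0(\rho)-0$ is governed by the second-order operator obtained from the covering Laplacian, that is, the canonical hypoelliptic model operator on the Kirillov/Schrödinger normal form. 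Its eigenvalues are $\mu_i(\theta)$ after dilation. The Selberg residue at $s_0(\rho)$ contributes $e^{s_0 T}/s_0$ to $\psi$ times the lowest-branch Selberg amplitude. This is the source of $A_{\rm Sel}$ and of $a^{\rm geo}_{0,i}$, and the factor $\tfrac12$ comes from orientation/parametrization bookkeeping, exactly as in the abelian case.

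Third, we write $1-s_0\approx\lambda_0\approx \mu$ and substitute $e^{s_0T}=e^T e^{-T\lambda_0+O(\lambda_0^2)T}$. The Pytlik integral then becomes a Laplace-type integral whose scaling under the dilation of $G$ produces $T^{-d/2}$. By Proposition~\ref{model-density} its angular part is $\int_\Theta J(\theta)\sum_i a_{0,i}\,\mu_i^{-d/2}$, multiplied by $\Gamma(d/2)\operatorname{vol}(M)^{d/2}$ from the Mellin transform $\int_0^\infty e^{-T\mu r}r^{d/2-1}dr$. Higher terms come from Taylor expanding $s_0(\rho)$, the amplitude and $J$ in the dilation parameter. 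Each extra homogeneous degree produces an extra $T^{-1/2}$, and the odd orders cancel under the parity symmetry. Partial summation converts $e^T T^{-d/2}$ in $\psi$ into $e^T T^{-1-d/2}$ in $\pi$, with an all-order expansion in $T^{-1/2}$.

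The main obstacle is justifying the interchange of the spectral sum over the $i$ with the Pytlik integral and the asymptotic expansion, uniformly to order $N$. Two issues arise: the fluctuation of the fiber integrand at finite $q$ must be dominated, and the coefficient-weighted sums $\sum_i a_{j,i}\mu_i^{-d/2-k}$ must converge. For the first, we use the $q$-only control from the rational transfer: the fiber error is $O(q^{-m})$, uniformly in the numerator and in $\xi$, and is Pytlik-negligible against the large-denominator comparison. For the second, we invoke Theorem~\ref{all-order-model-summability}, which relies on Rockland estimates and the Plancherel--Mellin formula. The remainder $o(T^{-N/2})$ then follows from Taylor remainder estimates combined with the same order-balance argument.
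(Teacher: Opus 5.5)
Your overall architecture matches the paper's: a dimension-normalized twisted Selberg formula, Schur plus Pytlik extraction of the central class, localization at the trivial representation, Laplace--Mellin evaluation through Proposition~\ref{model-density}, Proposition~\ref{rational-spectral-transfer} and Theorem~\ref{all-order-model-summability}, and then partial summation. However, the localization step fails as you justify it. Proposition~\ref{rational-spectral-transfer} concerns only \emph{localized} observables on a fixed compact regular chart near the trivial representation, and \eqref{bounded-height-zero} only discards bounded-denominator fibers. Neither controls large-denominator fibers whose Kirillov parameter is far from $0$ (for ${\rm Heis}_3(\mathbb Z)$, say $p/q$ near $1/2$), and these carry positive $\Lambda$-mass. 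On them you need a uniform bound $\lambda_0(\rho)\ge c>0$, i.e.\ $s_0(\rho)\le 1-\delta$ (not ``bounded below $1/4$''). The ingredient that supplies this is Sunada's inequality, Theorem~\ref{sunada}: $\lambda_0(\rho)\ge c_1\delta_A(\rho,\mathbf 1)^2$ holds pointwise for every unitary representation. Hence outside a Kazhdan neighborhood $U$ every spectral parameter satisfies $\mathrm{Im}\,r_j\le\nu<1/2$, and positivity together with $\|\Lambda\|=1$ bounds the averaged contribution of the complement of $U$ without any countable additivity.

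Second, the normalization and the remainder. As written, your identity pairs $\overline{\chi_\rho(\alpha)}/\dim\rho$ with the unnormalized twisted Chebyshev function coming from $Z(s,\rho\circ\Phi)$, i.e.\ with the observable $\dim\rho\cdot\operatorname{tr}_\rho(\alpha^{-1}\beta)$. This is unbounded in the height, hence not in the domain of $\Lambda$. It is also not \eqref{exact-pytlik-application}, which requires $\operatorname{tr}_\rho(\alpha^{-1})\operatorname{tr}_\rho(\beta)=(\dim\rho)^{-2}\,\overline{\chi_\rho(\alpha)}\chi_\rho(\beta)$. You must divide the entire twisted formula by $\dim\rho$, identity term and spectral multiplicities included, as in \eqref{trace}; only then is every term bounded uniformly in $\rho$.

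Relatedly, $\Lambda$ is merely finitely additive. So the per-fiber residue statement $e^{s_0T}/s_0$ can be passed through $\Lambda$ only together with a remainder that is exponentially smaller than $e^{T}$ and bounded uniformly in $\rho$ after normalization. You do not supply this. A Laplace-transform smoothing $e^{-s\ell}$ combined with a Tauberian theorem would give only the leading term, not $o(T^{-N/2})$. The paper obtains the needed remainder from Phillips' compactly supported test function $h_T=\chi_{[-T,T]}\ast k_\varepsilon$, the uniform errors in \eqref{lefthandside}--\eqref{righthandside}, and the choice $\varepsilon=e^{-\delta T}$, which together yield \eqref{LR}.

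Finally, the $\tfrac12$ in $C_0^{\rm geo}$ is not orientation bookkeeping: for fixed central $\alpha$, reversing orientation sends the class $\alpha$ to $\alpha^{-1}$. It arises from differentiating $e^{T/2}$ when the weights $\ell/(2\sinh(\ell/2))$ are removed by partial summation. In your unweighted-$\psi$ route this factor does not appear, so the constant must be tracked through your own normalization.
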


Here \(A_{\rm Sel}\) records only the normalization convention for the Selberg
trace formula.  In the convention used in \eqref{Selbergtrace}, with
dimension-normalized traces and the hyperbolic factor \(2\sinh(\ell/2)\) in the
denominator, one has \(A_{\rm Sel}=1\).  If a different normalization of the
test transform or of the hyperbolic term is used, \(A_{\rm Sel}\) is the
explicit scalar converting that convention to the one used here.  The factor
\(1/2\) in the displayed constant is the standard weighted-to-unweighted
conversion in the prime-geodesic problem.

Thus the representation-theoretic quantity governing the long--time
heat kernel asymptotics reappears unchanged in the distribution of
closed geodesics.  In the abelian case, the corresponding operator is
the quadratic form on the character torus.  In the nilpotent case, it
is replaced by a canonical hypoelliptic operator on the Lie-theoretic
model determined by the Malcev completion.

The present work is related to the author's earlier announcement
``Asymptotics of heat kernels on nilpotent coverings and related topics''
\cite{Katsudainverse}.  That article should be read as an announcement and preliminary account.
The present paper separates the exact rational trace identities from the
smooth model calculation and supplies the rational-transfer and Rockland--Mellin
arguments needed to close the latter at every fixed order.  The main results
were also announced in~\cite{KatsudaRIMS}, and an earlier RIMS
K\^oky\^uroku article~\cite{KatsudaHeisRIMS} treated the special Heisenberg
case.  Material from the more extensive unpublished manuscript~\cite{Katsuda0}
which is needed for the model computations is incorporated here.

More general situations, such as variable negative curvature or
general Anosov flows, require substantially different dynamical
arguments and fall outside the scope of the present paper.  In the
abelian setting, asymptotic expansions of the prime geodesic counting
function have been established for compact Riemannian manifolds with
variable negative curvature by Anantharaman~\cite{Anantharaman1},
Pollicott and Sharp~\cite{Pollicott2}, and Kotani~\cite{Kotani1}.
Leading-term asymptotics in even more general settings are known for
prime closed orbits of weakly mixing Anosov flows on compact
manifolds, due to Lalley~\cite{Lalley}, Pollicott~\cite{Pollicott1},
Katsuda and Sunada~\cite{Katsuda2}, and Sharp~\cite{Sharp1}.  There
are also extensions to central limit theorems and large deviation
principles; see, for instance, Lalley~\cite{Lalley},
Babillot~\cite{Babillot}, and
Anantharaman~\cite{Anantharaman1,Anantharaman2}, as well as the
surveys~\cite{Parry3,Margulis2,Sharp2}.

Adapting these tools to the present nilpotent covering setting,
particularly in relation to the spectral analysis of transfer
operators with nilpotent twists, remains to be carried out.  We do
not pursue such extensions here.

\subsection{Representation-theoretic input from the foundation paper}
\label{repinput}

We now record the precise form of the input from the foundation paper in the
form in which it is used in the preceding theorems.  Let \(\Gamma\) be a
torsion-free finitely generated nilpotent lattice and let \(G\) be its Malcev
completion.  For a rational
Kirillov parameter \(l\in\mathfrak g_{\mathbb Q}^*\), the irreducible
representation \(\pi_l\) of \(G\) satisfies an exact restriction formula
\[
   \pi_l|_\Gamma
   \simeq
   \int_T^\oplus
      \operatorname{Ind}_{M_t\cap\Gamma}^{\Gamma}(\chi_t)\,dt .
\]
This is an ordinary Hilbert direct integral over a compact torus of Bloch
parameters.  The fibers are induced lattice representations; they are not
claimed to be finite-dimensional for Lebesgue-generic \(t\).

The finite-dimensional part appears at the next, arithmetic, level.  On a
rational finite-dimensional locus, described using Howe's classification
\cite{Howe} of finite-dimensional irreducible representations of nilpotent
lattices, the
induced fibers above are refined into finite-dimensional irreducible
representations
\[
   \operatorname{Ind}_{P_t}^{\Gamma}(\chi_t^P\otimes\eta).
\]
At rational parameters these are exact Bloch-type fibers.  The large-height
regime relevant for the analysis is therefore a rational large-denominator
regime, and the fluctuation estimates for fixed finite-propagation observables
are controlled by the denominators of the rational characters.  For an
irrational parameter there is no finite denominator and no finite-dimensional
decomposition of the same exact form.  This causes no difficulty for the
rigorous trace identities used in this paper: the Pytlik-type finitely additive
functional is supported on the rational finite-dimensional locus.  The
Schr\"odinger or Kirillov representation over the Malcev completion is used as
a smooth normal form for computing the limiting coefficients of the rational
finite-dimensional fibers, not as a uniformly continuous family of Hilbert-space
representations in the irrational parameter.

Finally, the foundation paper constructs a Pytlik-type finitely additive trace
functional, inspired by Pytlik's Heisenberg formula~\cite{Pytlik}, and supported
on the finite-dimensional rational fibers.  It is used here
only as a Fourier inversion and normalized trace identity.  It is not a Hilbert
direct-integral decomposition over the full unitary dual.  Thus the analytic
computations below may be organized in the smooth Kirillov model, while the
actual lattice-side traces are read in the finite-dimensional rational fibers.

\subsection{Exact rational transfer and all--order model summability}
\label{analytic-bridge-hypotheses}

The following facts are part of the exact finite-dimensional framework of the
companion foundation paper.  We record them because they separate the exact
lattice-side argument from the smooth model calculation and make explicit the
two analytic consequences proved below: rational large-denominator transfer and
all--order coefficient-weighted summability.

Let \(\Lambda\) denote the positive norm-one Pytlik-type functional on bounded
scalar observables of the finite-dimensional rational fibers, and let
\({\rm ht}(\rho)\) be the height defined by a residual tower.  The generalized
Pytlik formula gives, for \(f\in\ell^1(\Gamma)\),
\begin{equation}
 f(\sigma)=\Lambda\!\left(
 \rho\longmapsto \frac{1}{\dim\rho}
 {\rm Tr}\bigl(\rho(\sigma^{-1})\rho(f)\bigr)\right).
\label{exact-pytlik-application}
\end{equation}
Moreover,
\begin{equation}
 \Lambda\bigl({\bf 1}_{\{{\rm ht}\leq Q\}}\bigr)=0
 \qquad(Q<\infty).
\label{bounded-height-zero}
\end{equation}

\begin{lemma}[High-height error transfer]
\label{high-height-transfer-lemma}
Let \(F\) and \(G\) be bounded observables in the domain of \(\Lambda\).  If
\[
 \sup_{{\rm ht}(\rho)>Q}|F(\rho)-G(\rho)|\leq\varepsilon,
\]
then
\[
 |\Lambda(F)-\Lambda(G)|\leq\varepsilon.
\]
\end{lemma}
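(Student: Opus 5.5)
We use only three properties of \(\Lambda\): linearity, positivity, and norm one (\(\Lambda(\mathbf 1)=1\)), together with the vanishing \eqref{bounded-height-zero} on bounded-height sets. Set \(H=F-G\), which is a bounded observable in the domain of \(\Lambda\), and split it by the indicator of the low-height set:
\[
 H = H\,\mathbf 1_{\{{\rm ht}\leq Q\}} + H\,\mathbf 1_{\{{\rm ht}>Q\}} .
\]
By linearity, \(\Lambda(F)-\Lambda(G)=\Lambda(H)\), so we bound the two pieces separately. Before doing so we must check that both pieces lie in the domain of \(\Lambda\). We expect this to be automatic, since the domain consists of bounded scalar observables on the rational fibers and the height sets \(\{{\rm ht}\le Q\}\) are among the sets the functional evaluates. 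If the domain is instead a specified algebra, we would note that it must contain these indicators, because \eqref{bounded-height-zero} is stated for them, and is closed under products with bounded observables.

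\emph{Low-height piece.} Positivity gives \(|\Lambda(u)|\le \Lambda(|u|)\) for real \(u\). For complex \(u\) we take real and imaginary parts, or better rotate by a phase \(e^{i\phi}\) so that \(e^{i\phi}\Lambda(u)\ge0\), and then apply positivity to the real part. We then use the pointwise bound \(|H\mathbf 1_{\{{\rm ht}\le Q\}}|\le \|H\|_\infty \mathbf 1_{\{{\rm ht}\le Q\}}\). Positivity yields
\[
 |\Lambda(H\mathbf 1_{\{{\rm ht}\le Q\}})|\le \|H\|_\infty\,\Lambda(\mathbf 1_{\{{\rm ht}\le Q\}})=0 .
\]
This step is where finite additivity, rather than \(\sigma\)-additivity, could have caused concern. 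Only finite additivity and monotonicity are used, however, so no countable limit ever appears.

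\emph{High-height piece.} By hypothesis \(|H\mathbf 1_{\{{\rm ht}>Q\}}|\le\varepsilon\,\mathbf 1_{\{{\rm ht}>Q\}}\le\varepsilon\mathbf 1\) pointwise. The same phase-rotation and positivity argument gives
\[
 |\Lambda(H\mathbf 1_{\{{\rm ht}>Q\}})|\le\varepsilon\Lambda(\mathbf 1)=\varepsilon .
\]
Adding the two estimates gives \(|\Lambda(F)-\Lambda(G)|\le\varepsilon\).

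The only step that needs any care is the inequality \(|\Lambda(u)|\le\Lambda(|u|)\) for complex-valued observables \(u\). It follows from positivity in the standard way: choose \(\phi\) with \(e^{i\phi}\Lambda(u)=|\Lambda(u)|\). Then \(|\Lambda(u)|=\Lambda(\operatorname{Re}(e^{i\phi}u))\), because \(\Lambda\) commutes with complex conjugation, as any positive functional does. Finally \(\operatorname{Re}(e^{i\phi}u)\le|u|\) pointwise, so positivity gives \(|\Lambda(u)|\le\Lambda(|u|)\).
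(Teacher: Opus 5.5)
Your proposal is correct and follows the paper's route: split $F-G$ along $\{{\rm ht}\le Q\}$ and its complement, kill the first piece using \eqref{bounded-height-zero} and positivity, and bound the second by $\varepsilon\Lambda(\mathbf 1)=\varepsilon$. The phase-rotation argument for complex-valued observables and the remark that the domain must be closed under multiplication by the height indicators only make explicit what the paper's short proof leaves implicit.
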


\begin{proof}
The contribution of \(\{{\rm ht}\leq Q\}\) is zero by
\eqref{bounded-height-zero}.  On its complement, positivity and
\(\|\Lambda\|=1\) give the stated bound.  No countable additivity and no
interchange of a limit with an integral is used.
\end{proof}

\begin{proposition}[Model-observable integration]
\label{model-density}
Use the geometric cube-first rational functional fixed in the companion paper.
On the algebra of compactly supported continuous model observables pulled back
from a Plancherel-regular Kirillov chart, that functional is ordinary integration
with the Kirillov--Fujiwara density.  In homogeneous coordinates
\(l=\delta_{\sqrt r}\theta\), including the normalized dimension and rational
multiplicity factors, it has the form
\begin{equation}
 J(\theta)r^{d/2-1}\,dr\,d\theta.
\label{exact-model-density}
\end{equation}
For the Heisenberg chart the corresponding one-dimensional expression is
\begin{equation}
 C_{\rm Pl}\,|h|\,dh,
\label{exact-heisenberg-density}
\end{equation}
where \(C_{\rm Pl}\) contains the fixed lattice, Bloch, and numerator
normalizations.  Lower-dimensional chart boundaries have zero content.
\end{proposition}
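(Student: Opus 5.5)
The plan is to identify the geometric cube-first rational functional with a Riemann-sum limit over rational Kirillov parameters, and then recognize that limit as the Kirillov--Fujiwara (Plancherel) measure on the regular chart.

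\emph{Step 1: reduce to indicator-like test observables.} By the companion paper, the cube-first functional is defined as an iterated limit. At level \(Q\) one averages a fiber observable over the rational finite-dimensional representations \(\operatorname{Ind}_{P_t}^{\Gamma}(\chi_t^P\otimes\eta)\) whose parameters lie in dyadic-type rational cubes of denominator \(\le Q\). Each representation is weighted by its normalized dimension and its rational multiplicity. For a compactly supported continuous model observable \(F\) pulled back from a Plancherel-regular chart \(U\), the value \(F(\rho)\) depends only on the Kirillov parameter \(l(\rho)\in U\cap\mathfrak g_{\mathbb Q}^*\).

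\emph{Step 2: evaluate on a rational cube.} We count rational parameters of denominator \(q\) in a cube \(B\subset U\), together with their Howe multiplicity data (the choices of \(\eta\) and the Bloch variables \(t\)), weighted by \(\dim\rho\). The intended output is \(\operatorname{vol}(B)\cdot\operatorname{Pf}(l_B)\cdot c\) up to \(o(1)\) as the denominator tends to infinity. Here \(\operatorname{Pf}\) is the Pfaffian of the Kirillov form restricted to a complement of the radical. The counting uses that, for a rational \(l\) of denominator \(q\), Howe's classification gives \(\dim\rho\asymp q^{\#\text{jump indices}/2}\) times an explicit Pfaffian factor. It also uses that the number of inequivalent fibers over \(l\) is the index of the rational polarization lattice, which is exactly what the normalized dimension cancels. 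Summing, one obtains a Riemann sum for \(\int_B |\operatorname{Pf}(l)|\,dl\).

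\emph{Step 3: pass to the limit.} Uniform continuity of \(F\) on \(\operatorname{supp}F\) lets us approximate \(F\) by step functions on cubes, with error controlled through Lemma~\ref{high-height-transfer-lemma}. Small-height cubes carry zero mass by \eqref{bounded-height-zero}. Positivity and the norm-one property then give \(\Lambda(F)=\int F\,|\operatorname{Pf}(l)|\,dl\) up to the fixed normalization constant. Lower-dimensional chart boundaries are handled by covering them with cubes of total volume \(\varepsilon\); step 2 bounds their contribution by \(C\varepsilon\), so they have zero content.

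\emph{Step 4: change to homogeneous coordinates.} The dilations \(\delta_s\) act on \(\mathfrak g^*\) with Jacobian \(s^{d}\). The Pfaffian is homogeneous of degree \(d-\dim\mathfrak g\) in the relevant grading, once combined with the radical's Lebesgue factor. Writing \(l=\delta_{\sqrt r}\theta\) then gives \(J(\theta)r^{d/2-1}\,dr\,d\theta\), with \(J\) absorbing the Pfaffian on the unit sphere and the constants. For the Heisenberg group, \(\operatorname{Pf}(h)=|h|\), and the multiplicity counts (numerator choices and Bloch torus) give \(C_{\rm Pl}|h|\,dh\).

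The main obstacle is step 2. One must match Howe's arithmetic multiplicity and dimension data at denominator \(q\) with the continuous Pfaffian density, uniformly in the numerator, and check that the normalized dimension cancels the polarization index exactly rather than only up to bounded factors. I would first do this explicitly for the Heisenberg and filiform towers, then induct along the residual tower using Kirillov's orbit method for subquotients.
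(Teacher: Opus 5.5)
Your Steps 3 and 4 are essentially what the paper does. You sandwich the observable between upper and lower step functions and use positivity to get a Riemann integral, which also gives zero content to lower-dimensional chart boundaries. You then change to the homogeneous coordinates $l=\delta_{\sqrt r}\theta$.

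The gap is Step 2. It carries the whole content of the proposition, and you leave it as an ``intended output''. The paper performs no lattice-point count at all. Its input is that the geometric cube-first content, as constructed in the companion paper, assigns to the rational points of every Jordan set the ordinary volume of that set. The step-function argument then gives plain Riemann integration immediately. The Kirillov--Fujiwara weight is produced only afterwards, by inserting the exact normalized-dimension and rational-multiplicity factors before the polar change of variables. These factors relate the normalized finite trace to the model trace. In the Heisenberg chart, the paper obtains $C_{\rm Pl}|h|\,dh$ from the factor contributed by the normalized trace together with the two central signs.

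Moreover, the mechanism you propose for Step 2 cannot output $|{\rm Pf}(l)|$. Consider a weight that depends only on the denominator of a rational point, as $\dim\rho$ and the Howe multiplicity data do in the Heisenberg case. As the denominators grow, such a weight equidistributes to a density that is uniform in the lattice parameter. For example, in the finite quotients ${\rm Heis}_3(\mathbb Z/N)$, a central value $k/N$ of order $q$ carries $(N/q)^2$ irreducibles of dimension $q$. Its total Plancherel weight is therefore $(N/q)^2q^2/N^3=1/N$, independent of $k$.

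In particular, for $h=p/q$ one has $\dim\rho=q$ exactly, with no dependence on the numerator $p$. So your claim that $\dim\rho$ carries an explicit Pfaffian factor already fails for the Heisenberg group, and $|h|=|p|/q$ cannot emerge from such a count. Indeed, if the normalized dimension cancels the fiber count exactly, as you assert, the result is a uniform density, not $|{\rm Pf}(l)|\,dl$.

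To repair the argument, take the volume property of the cube-first content as the input from the companion paper. Conclude ordinary Lebesgue integration on the rational skeleton. Then let the dimension and multiplicity conversion factors, rather than any count, supply $J(\theta)r^{d/2-1}$ and $C_{\rm Pl}|h|$.
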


\begin{proof}
The cube-first content assigns to the rational points in each Jordan set the
ordinary volume of that set.  Uniform approximation of a continuous compactly
supported observable by upper and lower step functions therefore gives its
ordinary Riemann integral.  Formula \eqref{exact-model-density} is the change of
variables to homogeneous polar coordinates after inserting the exact normalized
dimension and multiplicity factors.  In the Heisenberg case
\(h=p/q\), the normalized trace contributes \(q^{-1}\), and the two central
signs give \eqref{exact-heisenberg-density}.  This statement concerns the
selected geometric functional on the model-observable algebra; it does not
assert literal equality of all finitely additive set functions arising from
arbitrary residual towers and ultrafilters.
\end{proof}

The rational decomposition also supplies the comparison with the smooth model
at the level at which it is used in this paper.  Let
$\operatorname{den}(\rho)$ denote the denominator height of a rational
finite-dimensional fiber; for the Heisenberg group this is the denominator
$q$ of the reduced central parameter $p/q$.

\begin{proposition}[Rational large-denominator spectral transfer]
\label{rational-spectral-transfer}
Fix a compact regular parameter chart and an order $N\geq0$.  Let
$F^{\rm fin}_{N}$ be any localized, dimension-normalized heat or Selberg
observable which occurs in the expansion through order $N$, evaluated in an
exact rational finite-dimensional fiber, and let $F^{\rm mod}_{N}$ be the
corresponding value computed in the scaled Kirillov normal form.  Then there is
a function $\varepsilon_{N}(Q)\to0$ such that
\begin{equation}
 \sup_{\operatorname{den}(\rho)>Q}
 \bigl|F^{\rm fin}_{N}(\rho)-F^{\rm mod}_{N}(\rho)\bigr|
 \leq \varepsilon_N(Q).
\label{rational-transfer-estimate}
\end{equation}
The estimate is uniform in the rational numerators and in the compact Bloch
variables.  Consequently,
\begin{equation}
 \Lambda(F^{\rm fin}_{N})=\Lambda(F^{\rm mod}_{N}).
\label{rational-transfer-functional}
\end{equation}
\end{proposition}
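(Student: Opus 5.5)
The argument has two parts: a uniform estimate \eqref{rational-transfer-estimate} inside the exact finite fibers, and then the passage to the functional via Lemma~\ref{high-height-transfer-lemma} and \eqref{bounded-height-zero}.

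\emph{Finite-propagation reduction.} Fix $N$ and the compact regular chart. A localized, dimension-normalized heat or Selberg observable through order $N$ is a finite linear combination of normalized traces
\[
\frac{1}{\dim\rho}\operatorname{Tr}\bigl(\rho(\sigma^{-1})\rho(f)\bigr).
\]
Here $f$ is a finite-propagation word in the generators, or a polynomial in them, coming from the Taylor expansion of the twisted operator. Any remainder is controlled by a tail in word length. That tail has norm at most $\delta_N(R)\to0$ uniformly in $\rho$, since $\|\rho(f)\|\le\|f\|_{\ell^1}$ for every unitary $\rho$. So it suffices to treat $f$ supported in a ball of radius $R$ and to let $R\to\infty$ at the end.

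\emph{Comparison for a fixed word.} In the Heisenberg case, take $\rho$ to be the $q$-dimensional Harper/clock-shift fiber at $h=p/q$ with Bloch variables $(\theta_1,\theta_2)$. The normalized trace of a word of length $\le R$ is a finite sum of phases $e^{2\pi\iu(p/q)m}e^{\iu(\cdot)\theta}$, restricted to words whose abelianized image is divisible by $q$. Once $q>R$, the only surviving words are those that are central in the image, that is, trivial abelianization. Their normalized trace is then exactly the central character evaluated on the word. This is also the value obtained in the Schr\"odinger normal form after the scaling $h=p/q$ and the symbol calculus to order $N$.

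The Schr\"odinger model contributes error terms of size $O(|h|^{(N+1)/2})$ coming from the scaled expansion. Bloch phases cancel identically in the normalized trace once $q>R$, which gives uniformity in $\theta$. The central phase is evaluated exactly, which gives uniformity in $p$.

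For a general nilpotent $\Gamma$ I would argue in the same way, using Howe's induced fibers $\operatorname{Ind}_{P_t}^{\Gamma}(\chi_t^P\otimes\eta)$. By Frobenius' character formula, the normalized trace of $\gamma$ is an average of $\chi_t^P$ over the conjugates of $\gamma$ that lie in $P_t$. Once the denominator exceeds a constant depending on $R$, every word of length $\le R$ lying in $P_t$ is forced into the subgroup where the character is polynomial in the rational Kirillov coordinates. That value matches the Kirillov character formula term by term through order $N$.

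Combining the two steps gives
\[
\varepsilon_N(Q)=\inf_R\bigl(\delta_N(R)+\mathbf 1_{Q<C(R)}\bigr)\to0,
\]
which is \eqref{rational-transfer-estimate}.

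\emph{Passage to the functional.} Lemma~\ref{high-height-transfer-lemma} gives $|\Lambda(F^{\rm fin}_N)-\Lambda(F^{\rm mod}_N)|\le\varepsilon_N(Q)$ for every $Q$; this uses that height dominates denominator on the chart. Letting $Q\to\infty$ yields \eqref{rational-transfer-functional}.

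The main obstacle is the general nilpotent comparison step. One must show that for every finite-propagation word, the Frobenius average over the induced fiber coincides with the scaled Kirillov normal form uniformly once the denominator is large. This requires controlling how the polarizing subgroup $P_t$ varies with the Bloch variables across the whole chart. I would try first to choose a fixed rational polarization adapted to a Malcev basis on the regular chart, so that $P_t$ is constant up to finite index bounded by the denominator.
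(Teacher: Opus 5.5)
You take a genuinely different route from the paper, comparing lattice characters where the paper compares operators, and there is a genuine gap at the point where your route has to reach $F^{\rm mod}_N$.

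Your character computation is correct as far as it goes. For $\mathrm{Heis}_3(\mathbb Z)$ and $f$ supported in the word ball of radius $R<q$, one has $\operatorname{tr}_\rho\rho(f)=\sum_c f(w^c)e^{2\pi\iu pc/q}$ exactly, so for such $f$ you even get exact independence of the Bloch variables. The gap is the next sentence: that this central character ``is also the value obtained in the Schr\"odinger normal form after the scaling $h=p/q$ and the symbol calculus to order $N$.'' That identification is the whole content of the proposition, and it is not argued. $F^{\rm mod}_N$ is the order-$N$ normal-form quantity built from the gauge factor $S_h(p,q)$ and the perturbative expansion of $L_h$ around the oscillator $\mathcal H$. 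It carries the dimension and multiplicity normalization that later becomes $|h|\,dh$. Since $\rho_h(\gamma)$ is never trace class, even a normalized model value has to be defined through the Bloch direct integral.

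The paper's proof handles exactly this step at operator level:
\begin{itemize}
\item The restriction theorem ties the model to the exact rational fibers, whose Bloch fluctuation is controlled by the denominator uniformly in $p$.
\item At fixed order, the coefficients of the conjugated Laplacian are finite-propagation polynomials in the generators with base differential operators as coefficients. The companion paper's graph-norm estimate therefore compares the truncated normal forms.
\item Duhamel's formula (heat) and the Cauchy functional calculus (Selberg) carry that bound to the observables, and the normalized trace does not enlarge it.
\end{itemize}
Your reduction to $\operatorname{tr}_\rho\rho(f)$ with $\|f\|_{\ell^1}$ independent of $\rho$ covers the exact heat and Selberg traces. It does not cover the order-$N$ model objects (truncated operators, lowest-cluster projections, Volterra coefficients), so some such operator comparison is still needed.

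There are also three smaller problems.
\begin{itemize}
\item \emph{Truncation error.} The error $O(|h|^{(N+1)/2})$ you attribute to the scaled expansion is governed by $|h|$, not by the denominator. On a fixed chart it does not tend to $0$ as $Q\to\infty$ uniformly in $p$, yet your $\varepsilon_N(Q)=\inf_R(\delta_N(R)+\mathbf 1_{Q<C(R)})$ silently drops it. Comparing an exact fiber quantity with a truncated model quantity cannot yield \eqref{rational-transfer-estimate}. This is why the paper truncates both sides at the same order, so that only the denominator-controlled discrepancy remains.
\item \emph{General nilpotent case.} This step is not carried out. In step $\geq 3$, trivial abelianization is not centrality. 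The claim that the Frobenius average over $\operatorname{Ind}_{P_t}^{\Gamma}(\chi_t^P\otimes\eta)$ matches the Kirillov normal form term by term is precisely what needs proof. The paper instead uses the companion paper's fluctuation estimate, controlled by the denominators of the rational inducing characters.
\item \emph{Height versus denominator.} To invoke Lemma~\ref{high-height-transfer-lemma} you need bounded denominator to force bounded height. Equivalently, you need that bounded-denominator sets are $\Lambda$-null. This is the opposite direction from ``height dominates denominator.''
\end{itemize}
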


\begin{proof}
At a rational parameter the restriction theorem of the companion paper gives
an exact decomposition into finite-dimensional fibers.  Its fluctuation
estimate is controlled only by the denominators of the rational inducing
characters; in the Heisenberg case it is the $O(q^{-1})$ estimate for the two
Bloch variables, uniform in the numerator $p$.  At any fixed order the
coefficients of the conjugated Laplacian are finite sums of fixed
finite-propagation polynomials in the basic generators, with base differential
operators as coefficients.  The graph-norm estimate in the companion paper
therefore gives \eqref{rational-transfer-estimate} for the truncated normal
form.  Duhamel's formula transfers the same estimate to the heat observables,
and the Cauchy functional calculus does so for the smooth compactly supported
spectral functions used in the Selberg formula.  Taking the normalized matrix
trace does not enlarge the bound.  Finally,
Lemma~\ref{high-height-transfer-lemma} and
\eqref{bounded-height-zero} give \eqref{rational-transfer-functional}.
\end{proof}

\begin{remark}[No irrational contribution]
\label{no-irrational-contribution}
The proposition does not approximate an irrational representation inside the
Pytlik formula.  The exact functional is supported on rational
finite-dimensional fibers.  Irrational Kirillov parameters are useful only for
writing a smooth ambient normal form and for extending the rational model
observable continuously before applying Proposition~\ref{model-density}.
\end{remark}

The model-side analytic issue is internal to the smooth normal form: after the radial
Laplace integral one must justify the coefficient-weighted spectral sums.  This
is not an additional hypothesis.  In
Theorem~\ref{all-order-model-summability} we prove, for every fixed order, that
the relevant trace-level coefficient is of Rockland order $-d$, hence trace
class, and that its trace norm is integrable in the angular parameter.  The
proof uses the positive Rockland realization of the model operator, the
Plancherel--Mellin identity, and an exact order balance in the Volterra
expansion.

\begin{remark}[Scope of the completed argument]
\label{status-remaining}
For fixed points in the heat problem and a fixed central class in the geodesic
problem, Propositions~\ref{rational-spectral-transfer} and
\ref{model-density}, together with
Theorem~\ref{all-order-model-summability}, close the argument at every fixed
order.  A moving-point or moving-central-class local limit theorem requires the
same rational transfer estimates uniformly in the additional moving parameter;
that extra uniformity is stated separately where such local profiles are
discussed.  The uniform Wilkinson expansion and the exponentially small motion
of rational Harper bands remain independent semiclassical inputs in
Appendix~\ref{Anotherproofwilkinson}; they are not used to justify the main
finite-dimensional trace formula.
\end{remark}

\subsection{Examples: Heisenberg and Engel groups}

As concrete illustrations of the general theory, we consider the
three-dimensional Heisenberg group and the Engel group.  In both cases, the
leading constants are obtained by substituting the spectrum of the canonical
model operator into the preceding zeta formula.

For \(\Gamma={\rm Heis}_3(\mathbb Z)\), put
\[
A=\|\omega_1\|_{L^2(M)},\qquad
B=\|\omega_2\|_{L^2(M)} .
\]
The Heisenberg model operator is
\[
\mathcal H_{\rm Heis}
=
-A^2\frac{d^2}{ds^2}+4\pi^2B^2s^2,
\]
with eigenvalues
\[
\mu_i=2\pi AB(2i+1),\qquad i=0,1,2,\ldots .
\]
Consequently,
\[
\zeta_{\mathcal H_{\rm Heis}}(2)
=
\sum_{i=0}^\infty \mu_i^{-2}
=
\frac{1}{32A^2B^2}.
\]
Equivalently, if one rescales the oscillator to the normalized form
\(-d^2/ds^2+s^2\), then the corresponding zeta value is
\(\sum_{i\ge0}(2i+1)^{-2}=\pi^2/8\); the two formulas are the same after
undoing the scaling.

\begin{theorem}[Heisenberg heat coefficient]
\label{heisenberg-heat}
With the model-density normalization of Section~\ref{leadingHeisenberg}, the
exact rational transfer, the density \(|h|\,dh\), and the explicit oscillator
zeta sum give the leading diagonal coefficient
\[
C^{\rm heat}_{\rm Heis}(p,p)
=
2\,\operatorname{vol}(M)^2\zeta_{\mathcal H_{\rm Heis}}(2)
=
\frac{\operatorname{vol}(M)^2}{16
\|\omega_1\|_{L^2(M)}^2\|\omega_2\|_{L^2(M)}^2}.
\]
The off-diagonal amplitude retains the equivariant Lie-integral transport
factor.  At higher orders Proposition~\ref{rational-spectral-transfer} transfers
the explicit oscillator coefficients to the exact rational fibers, and the
required oscillator sums converge at every fixed order.
\end{theorem}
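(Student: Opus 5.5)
The plan is to specialize the general leading-coefficient formula of Theorem~\ref{conj-heat} to $\Gamma={\rm Heis}_3(\mathbb Z)$, where $d=4$, and then evaluate it explicitly. First, apply the exact Pytlik identity \eqref{exact-pytlik-application} to the heat operator on $X$, localized at $p=q$. This writes $k_X(t,p,p)$ as $\Lambda$ applied to the dimension-normalized fiber traces of $e^{-t\Delta_\rho}$. Here $\rho$ runs over the rational Heisenberg fibers, with central parameter $h=p/q$ and two Bloch variables.

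After rescaling $t$ against $h$, only small $|h|$ contributes at leading order. For such $h$ the perturbation theory of the twisted Laplacian near the bottom of the spectrum gives a reduced operator of the form
\[
h\,\mathcal H_{\rm Heis}/\operatorname{vol}(M)+O(h^{3/2}),
\]
acting on the Schr\"odinger space. The quadratic part comes from the $L^2$-norms $A$ and $B$ of the harmonic forms $\omega_1,\omega_2$ dual to the generators, exactly as in the abelian Albanese computation; the $\operatorname{vol}(M)$ factor comes from normalizing the ground state. Proposition~\ref{rational-spectral-transfer} justifies replacing each finite fiber observable by this oscillator normal form uniformly for $\operatorname{den}(\rho)>Q$. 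Lemma~\ref{high-height-transfer-lemma}, together with \eqref{bounded-height-zero}, then transfers the replacement to $\Lambda$.

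Next, evaluate the resulting continuous model observable with Proposition~\ref{model-density}, using the density $C_{\rm Pl}|h|\,dh$. The leading term becomes
\[
C_{\rm Pl}\int_{\mathbb R}|h|\sum_i e^{-t|h|\mu_i/\operatorname{vol}(M)}\,dh
=2C_{\rm Pl}\operatorname{vol}(M)^2 t^{-2}\sum_i\mu_i^{-2}.
\]
The factor $2$ comes from the two signs of $h$, and $\Gamma(2)=1$ comes from the radial Laplace integral. The interchange of the sum with the integral is justified by Theorem~\ref{all-order-model-summability}, or directly here because $\sum(2i+1)^{-2}$ converges. Substituting $\mu_i=2\pi AB(2i+1)$ gives $\zeta_{\mathcal H_{\rm Heis}}(2)=(4\pi^2A^2B^2)^{-1}\pi^2/8=1/(32A^2B^2)$, and hence
\[
C^{\rm heat}_{\rm Heis}(p,p)=\frac{\operatorname{vol}(M)^2}{16A^2B^2}.
\]
For the off-diagonal amplitude, the ground state transported from $p$ to $q$ picks up the equivariant line-integral factor. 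For higher orders, the same transfer argument is applied to the Duhamel-expanded oscillator coefficients, and their sums converge by Theorem~\ref{all-order-model-summability}.

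The main obstacle is bookkeeping the normalization $C_{\rm Pl}$. It must be checked that the lattice covolume, the Bloch torus measure, and the numerator count (the factor $q^{-1}$ from the normalized trace, and the number of reduced numerators $p$) combine so that $C_{\rm Pl}=1$ with the chosen normalization of $\omega_1,\omega_2$. This is what the phrase ``model-density normalization of Section~\ref{leadingHeisenberg}'' fixes. I would first verify it on the flat torus-bundle example (the Heisenberg nilmanifold covered by the group itself), where the heat kernel is explicit via Gaveau's formula, and then match constants.
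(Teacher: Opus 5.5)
Your outline follows the paper's own argument in Section~\ref{leadingHeisenberg} step by step:
\begin{itemize}
\item Pytlik inversion \eqref{Heatasymptoticsmain1};
\item localization near the trivial representation;
\item rational transfer via Proposition~\ref{rational-spectral-transfer} and Lemma~\ref{high-height-transfer-lemma};
\item the density $C_{\rm Pl}|h|\,dh$ of Proposition~\ref{model-density};
\item the radial Laplace integral, with the factor $2$ from the two central signs;
\item the oscillator value $\zeta_{\mathcal H_{\rm Heis}}(2)=1/(32A^2B^2)$.
\end{itemize}
Your planned calibration against the explicit Heisenberg heat kernel is exactly the check carried out in Section~\ref{comparison-to-explicit-formula}. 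One small correction: discarding non-small $h$ should be done as in \eqref{reductionbysunada}, using Sunada's inequality together with positivity and $\|\Lambda\|=1$. Rescaling $t$ against $h$ does not by itself control that region.

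There is, however, one factor your leading integral does not account for, and it bears on the normalization you call the main obstacle. You attribute the $1/\operatorname{vol}(M)$ in the eigenvalue to the ground-state normalization $f_i^{(0)}=\operatorname{vol}(M)^{-1/2}\psi_i$. The same normalization fixes the diagonal amplitude: since $S_h$ is unitary,
$\operatorname{Tr}\bigl(\varphi_{0,i,h}(p)\varphi_{0,i,h}(p)^*\bigr)=\|f_{0,i,h}(p)\|^2_{L^2(\mathbb R)}=\operatorname{vol}(M)^{-1}\bigl(1+O(h^{1/2})\bigr)$.
Your integral drops this factor. Keeping it gives $2\operatorname{vol}(M)\zeta_{\mathcal H_{\rm Heis}}(2)=\operatorname{vol}(M)/(16A^2B^2)$, for the heat kernel taken with respect to $dv_g$ and a metric-independent $C_{\rm Pl}$.

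A scaling test confirms this. Under $g\mapsto c^2g$ on an $n$-dimensional $M$:
\begin{itemize}
\item the harmonic forms are unchanged, and $A^2B^2$ scales by $c^{2n-4}$;
\item $\operatorname{vol}(M)$ scales by $c^n$;
\item $k_X(t,p,p)$ becomes $c^{-n}k_X(c^{-2}t,p,p)$, so the $t^{-2}$ coefficient must scale by $c^{4-n}$.
\end{itemize}
The quantity $\operatorname{vol}(M)/(A^2B^2)$ scales by $c^{4-n}$, as required; $\operatorname{vol}(M)^2/(A^2B^2)$ scales by $c^{4}$ and does not.

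The paper's displayed computation drops the same factor: its fiber factor \eqref{heis-second-fiber-factor} is normalized to $1+\cdots$. So the stated $\operatorname{vol}(M)^2$ holds only under a convention that absorbs one factor of $\operatorname{vol}(M)$, for instance taking the heat kernel relative to $dv_g/\operatorname{vol}(M)$. You should state that convention explicitly. Your proposed calibration cannot detect the discrepancy, because on the standard nilmanifold $\operatorname{vol}(M)=A=B=1$; the scaling test does.
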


A more precise statement is proved later in Theorem~\ref{leadingsecond}.  In
particular, the expansion does not stop at the leading term: the first
correction can also be written geometrically, using the harmonic data on the
base, the coexact central component, and the Green operator of \(\Delta_M\).

\begin{theorem}[Heisenberg central geodesic coefficient]
\label{heisenberg-geod}
For a fixed central class, rational spectral transfer, normalized trace
extraction, and the standard weighted-to-unweighted partial summation give the
leading coefficient
\[
C^{\rm geo}_{\rm Heis}(\alpha)
=
A_{\rm Sel}\operatorname{vol}(M)^2
\zeta_{\mathcal H_{\rm Heis}}(2)
=
\frac{A_{\rm Sel}\operatorname{vol}(M)^2}{32
\|\omega_1\|_{L^2(M)}^2\|\omega_2\|_{L^2(M)}^2},
\]
with the central-character factor normalized as in
Section~\ref{Asymptoticsclosedgeodesics}.  For the displayed Selberg convention
\(A_{\rm Sel}=1\).
\end{theorem}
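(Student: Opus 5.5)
The plan is to specialize the general scheme of Theorem~\ref{theorem-geod} to \(\Gamma={\rm Heis}_3(\mathbb Z)\) with \(d=4\), and then evaluate the resulting constant through the oscillator spectrum.

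\emph{Step 1: extract the central class.} Start from the Selberg trace formula \eqref{Selbergtrace} for the twisted Laplacians \(\Delta_\rho\), where \(\rho\) runs over the exact rational finite-dimensional fibers of central parameter \(p/q\). To isolate the class \(\alpha\in Z(\Gamma)\), apply the generalized Pytlik identity \eqref{exact-pytlik-application} to the indicator of \(\alpha\), using dimension-normalized traces. Since \(\alpha\) is central, \(\rho(\alpha^{-1})\) acts by the scalar central character \(e^{-2\pi\iu\,(p/q)\,n_\alpha}\). The weighted counting function \(\sum_{\ell(\gamma)\le T,\ \Phi[\gamma]=\alpha}\ell(\gamma_0)\) is therefore expressed as \(\Lambda\) applied to normalized hyperbolic terms multiplied by this character.

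\emph{Step 2: localize and pass to the model.} Choose a test transform concentrated near the bottom of the spectrum. The lowest eigenvalue branch \(\lambda_0(\rho)\) then gives the main term \(e^{T}\) times a factor decaying like \(e^{-T\lambda_0(\rho)}\), up to lower-order smoothing. Proposition~\ref{rational-spectral-transfer} replaces the finite-fiber normalized observable by its Kirillov normal form: at scale \(h=p/q\), the model operator is \(|h|\,\mathcal H_{\rm Heis}\) acting on \(L^2(\mathbb R)\) with the central character fixed. The resulting error is \(\varepsilon_N(Q)\), uniformly in \(p\) and in the Bloch variables. Lemma~\ref{high-height-transfer-lemma} together with \eqref{bounded-height-zero} then shows that \(\Lambda\) of the two observables coincide. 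Proposition~\ref{model-density} converts \(\Lambda\) into \(C_{\rm Pl}\int |h|\,dh\).

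\emph{Step 3: evaluate the integral.} The model observable is \(\sum_i e^{-T|h|\mu_i}\cdot e^{2\pi\iu h n_\alpha}\). For large \(T\), the effective range of \(h\) is \(O(1/T)\), so the character tends to \(1\) and contributes only to lower orders. The leading term is
\[
\sum_i\int_{\mathbb R}|h|e^{-T|h|\mu_i}\,dh=2T^{-2}\sum_i\mu_i^{-2}=2T^{-2}\zeta_{\mathcal H_{\rm Heis}}(2).
\]
Interchanging the sum and the integral is justified by Theorem~\ref{all-order-model-summability}, or here directly, since \(\sum(2i+1)^{-2}<\infty\). Inserting \(\Gamma(d/2)=1\), the factor \(\operatorname{vol}(M)^{2}\) coming from the normalization of the harmonic forms \(\omega_1,\omega_2\), and the constant \(C_{\rm Pl}\), yields a weighted count of the form \(A_{\rm Sel}\cdot 2\operatorname{vol}(M)^2\zeta_{\mathcal H_{\rm Heis}}(2)\,e^{T}T^{-2}\).

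\emph{Step 4: partial summation.} Pass from \(\ell(\gamma_0)\)-weighted counts over all closed geodesics to prime geodesics. Non-primitive geodesics contribute \(O(e^{T/2})\), which is negligible. Partial summation against \(1/\ell\) produces the extra factor \(T^{-1}\), giving \(e^TT^{-3}=e^T/T^{1+d/2}\). The conventional factor \(\tfrac12\) of Theorem~\ref{theorem-geod} converts the \(2\) into \(1\). This gives
\[
C^{\rm geo}_{\rm Heis}(\alpha)=A_{\rm Sel}\operatorname{vol}(M)^2\zeta_{\mathcal H_{\rm Heis}}(2),
\]
and the explicit value follows from \(\zeta_{\mathcal H_{\rm Heis}}(2)=1/(32A^2B^2)\).

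\emph{Main obstacle.} The hard part is the bookkeeping of normalizations in Steps 2--3. One must check that \(C_{\rm Pl}\), the two central signs, the \(q^{-1}\) coming from the normalized trace, and the scaling of the oscillator by \(\operatorname{vol}(M)\) combine into exactly \(\operatorname{vol}(M)^2\) with no stray \(2\pi\) factors. The cleanest way to do this is to compare with the heat computation of Theorem~\ref{heisenberg-heat}. That theorem uses the same density and model and gives \(2\operatorname{vol}(M)^2\zeta(2)\). The geodesic constant should then differ only by the Selberg factor \(A_{\rm Sel}\) and the weighted-to-unweighted factor \(\tfrac12\).
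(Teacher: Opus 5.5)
Your skeleton (Schur plus normalized Pytlik inversion to isolate $\alpha$, Sunada localization, rational transfer, the density $|h|\,dh$, the oscillator zeta sum, partial summation) is the paper's. However, the constant, which is the whole content of the statement, is not derived: the factor $\tfrac12$ in Step~4 has no source in your normalization.

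You weight by $\ell(\gamma_0)$, so the lowest branch contributes $e^{s_0T}/s_0$ with $s_0=1-\lambda_0+O(\lambda_0^2)$ and $1/s_0\to 1$. If $\psi_\alpha(T)\sim Ke^{T}T^{-2}$, partial summation against $1/\ell$ gives $\pi(T,\Phi,\alpha)\sim Ke^{T}T^{-3}$ with the \emph{same} $K$. With your Step~3 value $K=2A_{\rm Sel}\operatorname{vol}(M)^2\zeta_{\mathcal H_{\rm Heis}}(2)$, your argument therefore produces twice the stated constant. Appealing to the $\tfrac12$ of Theorem~\ref{theorem-geod} borrows a factor that belongs to a different normalization.

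In the paper, that $\tfrac12$ belongs to the weight $\ell/(2\sinh(\ell/2))$. The weighted sum $R(T)$ grows like $e^{T/2}T^{-2}$, and $\pi\approx\int^T 2\sinh(u/2)\,u^{-1}\,dR(u)$ picks up $\tfrac12$ from differentiating $e^{T/2}$. The weighted limit is taken as $T^2e^{-T/2}R(T)\to A_{\rm Sel}\mathcal G_{\rm Heis}(0)=2A_{\rm Sel}\operatorname{vol}(M)^2\zeta_{\mathcal H_{\rm Heis}}(2)$ (Proposition~\ref{GeodesicCLT} at $c=0$).

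But \eqref{lefthandside} also carries the amplitude $1/(-\iu r_{0,i})\to 2$. If that factor is carried through, it cancels the $\tfrac12$, and the $\sinh$-weighted route gives the same un-halved value as yours. Your $\psi$-route also reproduces the abelian Phillips--Sarnak constant with no extra $\tfrac12$: by \eqref{outline-abelian-hessian}, $\int_{\mathbb R^{2g}}e^{-4\pi^2T\|\omega\|^2/\operatorname{vol}(M)}\,d\omega=(\operatorname{vol}(M)/4\pi T)^{g}=(g-1)^gT^{-g}$, since the integral Gram determinant is $1$. So the factor $2$ has to be settled by explicit bookkeeping of these two competing factors, and of whatever convention is placed in $A_{\rm Sel}$. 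It cannot be fixed by citing the general theorem.

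A second factor is likewise inserted rather than computed. By \eqref{lambda2-general} the lowest branches are $\lambda_i(h)=\mu_i|h|/\operatorname{vol}(M)+\cdots$, because the base ground state is $\operatorname{vol}(M)^{-1/2}$. The effective model is therefore $|h|\mathcal H_{\rm Heis}/\operatorname{vol}(M)$, not $|h|\mathcal H_{\rm Heis}$. Then $\int|h|\,e^{-T|h|\mu_i/\operatorname{vol}(M)}\,dh=2\operatorname{vol}(M)^2(T\mu_i)^{-2}$ produces $\operatorname{vol}(M)^2$ directly. It does not come from ``the normalization of the harmonic forms'', and your displayed Step~3 integral omits it.

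Finally, your shortcut of comparing with the heat constant is not automatic. The heat amplitude is a pointwise diagonal value of the lowest-cluster projection at $p$, which contains $\|f_i^{(0)}(p)\|^2_{L^2(\mathbb R)}=\operatorname{vol}(M)^{-1}$. The Selberg amplitude is its full trace over $L^2(M)$ weighted by the central character. So the ratio of the two constants is not fixed by $A_{\rm Sel}$ and the weighted-to-unweighted factor alone. The Selberg-side Laplace integral must be carried out on its own rather than inferred from Theorem~\ref{heisenberg-heat}.
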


The closed-geodesic analogue of the first correction is recorded in
Subsection~\ref{subsec:heis-geod-second-term}; it is governed by the same
coefficients \(\lambda_i^{(2)}\) and \(\lambda_i^{(4)}\) that enter the heat
kernel expansion.

For the Engel group, the corresponding canonical model is a quartic oscillator.
Its spectrum is not given by elementary closed formulas, and the leading
coefficient is therefore naturally expressed through the spectral zeta function
of that model operator.  Appendix~A recalls a resolvent representation for the
quartic-oscillator zeta value.  This example illustrates why the general
nilpotent constants are most naturally stated in spectral-zeta form.

\subsection*{Use of AI Tools}
\addcontentsline{toc}{subsection}{Use of AI Tools}

AI and LLM tools, including Microsoft Copilot and ChatGPT, were used as
editorial and checking assistants: for copy-editing, notation and cross-reference
checks, symbolic bookkeeping, consistency checks, and assistance with
\LaTeX{} preparation.  They were not used as independent sources of mathematical
results.  All mathematical statements, proofs, corrections, and final judgments
remain the responsibility of the author.

\subsection*{Note on earlier versions}
\addcontentsline{toc}{subsection}{Note on earlier versions}

Preliminary versions of portions of this work appeared in the earlier integrated
preprint arXiv:2509.16848 and in earlier announcements cited below.  The
statements and proofs of the analytic and geometric results treated in the
present paper should be read in the form given here.

\subsection*{Acknowledgements}
\addcontentsline{toc}{subsection}{Acknowledgements}

First and foremost, the author is deeply grateful to Hidenori Fujiwara for his
expert guidance on the representation theory of nilpotent Lie groups, which lies
at the heart of the companion foundation paper used here.  His note communicated
to the author and his work on monomial Plancherel formulae have been
indispensable to the representation-theoretic side of this project.

The author also wishes to thank Fumio Hiroshima, Satoshi Ishiwata,
Hiroshi Kawabi, Motoko Kotani, Hisashi Naito, Ryuya Namba, Tatsuya Tate,
and Toshikazu Sunada for many stimulating discussions and valuable suggestions;
he is especially grateful to Professor Sunada for introducing him to these
mathematical themes and for his longstanding advice and encouragement.

\section{Outline of the proof strategy}
\label{outline}

We outline the proof in a form which separates the exact Floquet--Bloch or
trace-theoretic steps, denoted by \({\rm(F)}\), from the perturbative and
asymptotic steps, denoted by \({\rm(P)}\).  The representation theory used in
\({\rm(F)}\) is proved in the companion foundation paper and is recalled here
only to the extent needed for the analytic argument.

\subsection{Finite covering groups}

\begin{description}
\item[(P0)]
Let \(\pi:X\to M\) be a normal covering with finite deck group \(\Gamma\).
Then \(X\) is compact and the heat kernel has the ordinary spectral expansion
\begin{equation}
 k_X(t,p,q)=\sum_{j=0}^{\infty}e^{-\lambda_jt}
 \varphi_j(p)\overline{\varphi_j(q)},
\label{outline-finite-heat}
\end{equation}
where
\[
 0=\lambda_0<\lambda_1\leq\lambda_2\leq\cdots
\]
and \(\{\varphi_j\}\) is an orthonormal eigenbasis of \(\Delta_X\).  The
ground state is the constant function
\(\varphi_0=\operatorname{vol}(X)^{-1/2}\), whereas every other term is
exponentially small.  Hence
\[
 \lim_{t\to\infty}k_X(t,p,q)=\frac{1}{\operatorname{vol}(X)}.
\]
This elementary separation of the ground state from a uniformly positive
complementary spectrum is the compact prototype for the later fiberwise
arguments.
\end{description}

\subsection{Infinite abelian coverings}
\label{Infiniteabelian}

Assume for simplicity that \(\Gamma\simeq\mathbb Z^b\).  The essential
features of the general finitely generated abelian case already occur here.
The covering is noncompact, so the global spectral resolution of \(\Delta_X\)
contains continuous spectrum and is not by itself adapted to long-time local
asymptotics.

\begin{description}
\item[(F1)]
Regard \(L^2(X)\) as the space of sections of the flat Hilbert bundle
\[
 E_R=X\times L^2(\Gamma)/\!\sim,
 \qquad
 (p,v)\sim(\gamma p,R(\gamma^{-1})v),
\]
associated with the right regular representation.  Since \(\Gamma\) is
abelian, Fourier transform decomposes \(R\) over the character torus
\(\widehat\Gamma\simeq\mathbb T^b\).  Consequently
\begin{equation}
 L^2(E_R)\simeq
 \int_{\widehat\Gamma}^{\oplus}L^2(E_\chi)\,d\chi,
 \qquad
 k_X(t,p,q)=\int_{\widehat\Gamma}k_\chi(t,p,q)\,d\chi.
\label{outline-abelian-bloch}
\end{equation}
Here \(E_\chi\) is the flat line bundle associated with \(\chi\), and
\[
 k_\chi(t,p,q)=
 \sum_{\gamma\in\Gamma}\chi(\gamma^{-1})k_X(t,p,\gamma q)
\]
is the twisted heat kernel.  Each twisted Laplacian has discrete spectrum,
so
\[
 k_\chi(t,p,q)=
 \sum_{j=0}^{\infty}e^{-\lambda_j(\chi)t}
 \varphi_{j,\chi}(p)\overline{\varphi_{j,\chi}(q)}.
\]
Thus the continuous spectrum of the covering is replaced by a compact family
of discrete spectral problems.

\item[(P1)]
The domains of the twisted operators vary with \(\chi\).  Write a character
near the identity as
\[
 \chi_\omega(\gamma)=
 \exp\!\left(2\pi\iu\int_\gamma\omega\right),
\]
where \(\omega\) is harmonic.  On the universal cover the gauge
\[
 \widetilde s_\omega(p)=
 \exp\!\left(2\pi\iu\int_{p_0}^{p}\widetilde\omega\right)
\]
is equivariant and identifies the varying line bundles with a fixed copy of
\(L^2(M)\).  In this gauge
\begin{equation}
 L_\omega f
 =\Delta_Mf-4\pi\iu\langle\omega,df\rangle
   +4\pi^2|\omega|^2f.
\label{outline-abelian-gauge}
\end{equation}
The lowest eigenvalue is smooth, vanishes only at the trivial character, and
has positive Hessian
\begin{equation}
 {\rm Hess}_0\lambda_0(\omega,\omega)
 =\frac{8\pi^2}{\operatorname{vol}(M)}
   \int_M|\omega|^2\,dv_g.
\label{outline-abelian-hessian}
\end{equation}
The higher twisted eigenvalues stay uniformly away from zero.

\item[(P2)]
Morse coordinates for \(\lambda_0\) reduce the localized integral in
\eqref{outline-abelian-bloch} to a Gaussian Laplace integral.  This gives
\[
 k_X(t,p,q)\sim C(p,q)t^{-b/2},
\]
and the Taylor expansions of the lowest eigenvalue, eigenprojection, and gauge
factor give the full Kotani--Sunada series.  This is the model for the
nilpotent argument: an exact fiber identity is followed by a fixed-domain
normal form and a local Laplace calculation.
\end{description}

\subsection{The Heisenberg covering}
\label{Heisenbergexplanation}

Let \(\Gamma={\rm Heis}_3(\mathbb Z)\).  This is the first case in which the
ordinary character torus is insufficient, but the entire new mechanism is
still explicit.

\begin{description}
\item[(F2)]
The rigorous inversion formula is the generalized Pytlik formula on the
finite-dimensional rational dual.  If the central parameter is
\(h=p/q\) in lowest terms, the restriction of the Schr\"odinger
representation of \({\rm Heis}_3(\mathbb R)\) has the exact Floquet form
\[
 \rho_h|_\Gamma\simeq
 \int_{\mathbb T^2}^{\oplus}
 \rho_{{\rm fin},(p/q,x_2,x_3)}\,dx_2\,dx_3,
\]
up to the harmless affine reparametrization of the two Bloch variables fixed
in the companion paper.  The fibers have dimension \(q\), and the effect of
the Bloch variables on every fixed finite-propagation observable is controlled
by \(q\), uniformly in the numerator \(p\).  Bounded-denominator fibers have
zero Pytlik mass.  The large-\(q\) value computed in the smooth
Schr\"odinger normal form therefore agrees with the exact normalized
finite-dimensional trace.  Irrational parameters do not occur in the rigorous
inversion formula; they serve only to write the smooth ambient normal form.

\item[(P2)]
The harmonic line-integral gauge of the abelian case is replaced by an
equivariant Lie-integral gauge, equivalently by Chen iterated integrals.  After
conjugation and the natural Heisenberg scaling, the first nonzero averaged
operator is the modified harmonic oscillator
\[
 \mathcal H_{\rm Heis}
 =-\|\omega_1\|_{L^2(M)}^2\frac{d^2}{ds^2}
  +4\pi^2\|\omega_2\|_{L^2(M)}^2s^2.
\]
Its explicit spectrum gives the leading zeta value and permits a direct
calculation of the first correction.  Proposition~\ref{rational-spectral-transfer}
then transports the fixed-order model coefficients to the exact rational
fibers.
\end{description}

\subsection{General nilpotent coverings}
\label{nilpotentexplanation}

Let \(\Gamma\) be finitely generated, torsion-free, and nilpotent, and let
\(G\) be its Malcev completion.

\begin{description}
\item[(F3)]
The companion paper replaces the unavailable full non-type-I dual by an exact
rational finite-dimensional skeleton.  Its construction passes from the
factor decomposition of the lattice regular representation to monomial
representations of \(G\), applies the Kirillov--Fujiwara decomposition, and
restricts rational Kirillov representations back to \(\Gamma\), where Howe's
classification supplies finite-dimensional irreducible fibers.  The present
paper uses only the resulting facts: exact normalized finite-dimensional
inversion, zero mass of bounded denominator height, exact rational
large-denominator control, and the ordinary Kirillov--Fujiwara density on the
continuous model observables.  We do not repeat the representation-theoretic
proof.

\item[(P3)]
The Lie-integral gauge places the twisted operators on a fixed smooth domain.
Passing to the associated graded group and averaging the first nonzero term
gives a positive Rockland model
\[
 \mathcal H_\theta=d\pi_\theta(\mathcal R),
\]
where \(\mathcal R\) is the degree-two sub-Laplacian determined by the harmonic
Gram matrix.  The higher coefficients are polynomial differential operators
of controlled Rockland order.  The Volterra expansion is therefore carried
out at trace level, avoiding choices of individual eigenvalue branches at
multiplicities.

\item[(P4)]
Homogeneous polar coordinates write the model density as
\[
 J(\theta)r^{d/2-1}\,dr\,d\theta.
\]
The Plancherel--Mellin identity proves the integrability of
\(\operatorname{Tr}(\mathcal H_\theta^{-d/2})\).  At every higher order, the
positive Rockland degree contributed by the perturbation insertions is exactly
cancelled by the extra Mellin power.  The resulting coefficient operator has
Rockland order \(-d\), is trace class, and has an angularly integrable trace
norm.  This proves the coefficient-weighted summability required for the
all--order local expansion.
\end{description}

\subsection{Prime closed geodesics}

For a fixed central element of a nilpotent quotient of a compact hyperbolic
surface group, divide the entire twisted Selberg formula by the representation
dimension.  Schur's lemma and generalized Pytlik inversion then extract the
central class exactly.  The spectral integral is treated by the same rational
transfer, model density, and Rockland--Mellin expansion as the heat kernel.
The standard smoothing, iterate estimate, and partial summation convert the
weighted formula into the prime-geodesic asymptotic series.

\subsection{Organization of the remainder}

Section~\ref{reductionpathintegral} develops Lie integrals and Chen iterated
integrals and constructs the fixed-domain gauge.  The next sections perform
the perturbation calculation for the Heisenberg group and then establish the
positive Rockland and trace-level all--order arguments for general nilpotent
coverings.  Section~\ref{Longtimeheat} derives the heat-kernel expansions, and
Section~\ref{Asymptoticsclosedgeodesics} treats fixed central classes by the
Selberg trace formula.  The appendices collect the quartic-oscillator and
Harper calculations.

\section{Lie integrals and Chen's iterated integrals}
\label{reductionpathintegral}

This section constructs the gauge transformations used to write twisted
nilpotent Laplacians on a fixed Hilbert space.  In the abelian case this role is
played by ordinary line integrals of harmonic one-forms.  For nilpotent
coverings the corresponding objects are Lie integrals, equivalently Chen's
iterated integrals.  They encode the noncommutative Abel--Jacobi map and supply
the parallel transport used in the perturbation theory.

\subsection{Lie integrals and parallel transport}

\label{Lieintegral}

We first recall the notion of the Lie integral.  We follow the
convention of right Lie integrals, as in Benardete~\cite{Benardete}.

Let \(G\) be a Lie group with Lie algebra \(\mathfrak g\), and let
\[
f:[a,b]\longrightarrow \mathfrak g
\]
be a continuous path.  For a partition
\[
\Delta:\quad a=u_0<u_1<\cdots<u_n=b
\]
and choices \(u_i^*\in (u_{i-1},u_i]\), the definite right Lie
integral is defined by
\[
{}_R\!\int_a^b f
=
\lim_{\|\Delta\|\to 0}
\prod_{i=n}^{1}
\exp\!\left(f(u_i^*)(u_i-u_{i-1})\right),
\]
that is,
\[
{}_R\!\int_a^b f
=
\lim_{\|\Delta\|\to 0}
\exp\!\left(f(u_n^*)(u_n-u_{n-1})\right)
\cdots
\exp\!\left(f(u_1^*)(u_1-u_0)\right).
\]
Similarly, the definite left Lie integral is
\[
{}_L\!\int_a^b f
=
\lim_{\|\Delta\|\to 0}
\prod_{i=1}^{n}
\exp\!\left(f(u_i^*)(u_i-u_{i-1})\right).
\]
These limits are defined in the same way as the usual Riemann
integral, with the product order recording the noncommutativity of
\(G\).

Let now \(M\) be a compact smooth manifold, let
\(\alpha:[a,b]\to M\) be a smooth path, and let
\[
\omega:TM\longrightarrow \mathfrak g
\]
be a \(\mathfrak g\)-valued \(1\)-form.  Then
\(\omega(\alpha'(u))\) is a continuous path in \(\mathfrak g\), and
we define
\[
{}_R\!\int_\alpha \omega
=
{}_R\!\int_a^b \omega(\alpha'(u))\,du.
\]
The left Lie integral is defined similarly.  Both are independent of
the parametrization of the path.  If the concatenation \(\alpha\beta\)
is defined, then
\[
{}_R\!\int_{\alpha\beta}\omega
=
\left({}_R\!\int_{\beta}\omega\right)
\left({}_R\!\int_{\alpha}\omega\right),
\]
whereas
\[
{}_L\!\int_{\alpha\beta}\omega
=
\left({}_L\!\int_{\alpha}\omega\right)
\left({}_L\!\int_{\beta}\omega\right).
\]

The form \(\omega\) can be regarded as a connection form on the
trivial \(G\)-principal bundle \(M\times G\).  With our convention,
parallel transport along \(\alpha\) is given by
\[
P_{\alpha(a)}^{\alpha(t)}(\alpha(a),v)
=
\left(
\alpha(t),
\left({}_R\!\int_a^t \omega(\alpha'(u))\,du\right)v
\right).
\]
The transport equation for the right-ordered product is
\(dU=\omega U\).  With this convention the compatible flatness equation is
\begin{equation}
d\omega-\omega\wedge\omega=0.
\label{flatform}
\end{equation}
Equivalently, if the graded commutator is used, this is
\[
d\omega-\frac12[\omega,\omega]=0.
\]
If \(\omega\) is flat, then the Lie integral depends only on the
homotopy class of the path relative to its endpoints.  In the abelian
case this reduces to the familiar statement that the integral of a
closed \(1\)-form depends only on the homology class of the path.

When \(G\) is a matrix group, Lie integrals can be written in terms
of Chen's iterated integrals.  Let
\(\eta_1,\ldots,\eta_n\) be real-valued \(1\)-forms on \(M\), and let
\(\alpha:[a,b]\to M\) be a smooth path.  Put
\[
f_i(u)=\eta_i(\alpha'(u)).
\]
We define
\begin{align}
\int_\alpha \eta_1\eta_2\cdots\eta_n
&=
\int_{a\leq u_1\leq\cdots\leq u_n\leq b}
f_n(u_n)\cdots f_1(u_1)\,
du_1\cdots du_n                                            \notag\\
&=
\int_a^b
\left(
\int_a^{u_n}
\cdots
\left(
\int_a^{u_2} f_1(u_1)\,du_1
\right)
\cdots
du_{n-1}
\right)
f_n(u_n)\,du_n .
\label{section6lteratedintegral}
\end{align}

\begin{theorem}[Chen expansion of the right Lie integral]
Let \(G\) be a Lie subgroup of \({\rm GL}(N,\mathbb R)\), let
\(\omega\) be a \(\mathfrak g\)-valued \(1\)-form on \(M\), and let
\(\alpha\) be a smooth path in \(M\).  Then
\[
{}_R\!\int_\alpha \omega
=
I+\int_\alpha\omega
+\int_\alpha\omega\omega
+\int_\alpha\omega\omega\omega+\cdots .
\label{power}
\]
If \(G\) is nilpotent and is realized as a subgroup of the upper
triangular unipotent matrices, then this series is finite.
\end{theorem}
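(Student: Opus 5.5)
Set \(U(t)={}_R\!\int_a^t \omega(\alpha'(u))\,du\in G\subset{\rm GL}(N,\mathbb R)\) and \(A(u)=\omega(\alpha'(u))\), a continuous matrix-valued function. The plan is to identify \(U\) as the solution of a linear matrix ODE, solve that ODE by Picard iteration, and check that the \(n\)-th iterate is exactly the \(n\)-fold iterated integral \eqref{section6lteratedintegral}.

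First I would show that \(U\) satisfies the transport equation \(U'(t)=A(t)U(t)\) with \(U(a)=I\), which is the equation \(dU=\omega U\) recorded above. Concatenation gives \(U(t+h)=\bigl({}_R\!\int_t^{t+h}A\bigr)\,U(t)\). For the short segment, the Riemann-type product of exponentials satisfies
\[
{}_R\!\int_t^{t+h}A=I+hA(t)+o(h).
\]
This comes from \(\exp(X)=I+X+O(|X|^2)\) together with uniform continuity of \(A\). For the product over a partition, the quadratic error terms sum to \(O(h\|\Delta\|)+o(h)\), so they vanish in the limit. The main technical point is controlling this limit uniformly in the partition. Continuity of \(A\) on the compact interval \([a,b]\) handles it, in exactly the way one proves convergence of product integrals. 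It also shows that the limit defining the Lie integral exists.

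Second, I would rewrite the ODE as the Volterra equation \(U(t)=I+\int_a^t A(u)U(u)\,du\) and iterate it. Substituting the equation into itself \(n\) times gives terms of the form
\[
\int_{a\le u_1\le\cdots\le u_n\le t}A(u_n)\cdots A(u_1)\,du_1\cdots du_n .
\]
In these terms the later parameter stands on the left. That is precisely the ordering \(f_n(u_n)\cdots f_1(u_1)\) in the definition of \(\int_\alpha\omega\cdots\omega\), read with matrix multiplication. The series converges absolutely, since the \(n\)-th term is bounded by \(\bigl(\int_a^t\|A\|\bigr)^n/n!\). It solves the Volterra equation, and by uniqueness of solutions to linear ODEs (Gronwall) it equals \(U\). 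Evaluating at \(t=b\) gives the claimed expansion. Parametrization independence follows because both sides are invariant under reparametrization.

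Finally, suppose \(G\) is a subgroup of the upper triangular unipotent matrices. Then \(\mathfrak g\) consists of strictly upper triangular matrices, so any product of \(N\) of them vanishes. Each term with \(n\ge N\) has an integrand \(A(u_n)\cdots A(u_1)\) that is identically zero, so the series terminates after the \((N-1)\)-fold iterated integral.
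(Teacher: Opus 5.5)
Your proposal is correct and takes the paper's approach: the paper justifies the expansion only by the remark that it is the Picard iteration formula for the transport equation \(dU=\omega U\), together with the observation that nilpotency makes the series finite. You supply the details the paper omits: deriving the ODE from the right-ordered product, checking that the Volterra iterates have the ordering \(f_n(u_n)\cdots f_1(u_1)\), and noting that any \(N\)-fold product of strictly upper triangular matrices vanishes. One soft spot is that your short-segment expansion presupposes that the product limit exists, so existence is better shown by comparing partition products directly with the ODE solution; the paper, however, also takes that existence for granted.
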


This is the Picard iteration formula for the parallel transport
equation associated with the connection form \(\omega\).  Nilpotency
is important for us because it makes the expression finite and hence
computable in terms of finitely many iterated integrals.

For example, let
\[
G={\rm Heis}_3(\mathbb R),
\qquad
\Gamma={\rm Heis}_3(\mathbb Z).
\]
We may write a \(\mathfrak g\)-valued \(1\)-form as
\begin{equation}
\omega
=
\begin{pmatrix}
0 & \omega_1 & \omega_{12}\\
0 & 0        & \omega_2\\
0 & 0        & 0
\end{pmatrix},
\label{oneform}
\end{equation}
where \(\omega_1,\omega_2,\omega_{12}\) are real-valued \(1\)-forms.
Then
\[
{}_R\!\int_\alpha\omega
=
I+\int_\alpha\omega+\int_\alpha\omega\omega,
\]
and hence
\[
{}_R\!\int_\alpha\omega
=
\begin{pmatrix}
1 &
\displaystyle \int_\alpha\omega_1 &
\displaystyle \int_\alpha\omega_{12}
+
\displaystyle \int_\alpha\omega_2\omega_1 \\[1.2em]
0 &
1 &
\displaystyle \int_\alpha\omega_2 \\[1.2em]
0 & 0 & 1
\end{pmatrix}.
\]
With the convention in~\eqref{flatform}, flatness is equivalent to
\begin{equation}
d\omega_{12}
=
\omega_1\wedge\omega_2 .
\label{flatomega}
\end{equation}
Thus the central component of the parallel transport is not given by
an ordinary line integral alone; it also contains the iterated
integral \(\int_\alpha\omega_2\omega_1\).  This is the first place
where the noncommutative nature of the covering enters the analytic
coefficients.

\subsection[The \texorpdfstring{\(\pi_1\)}{pi1}-de Rham theorem]{The \(\pi_1\)-de Rham theorem}

In the abelian case, the character \(\chi\) is represented by a
harmonic \(1\)-form, and the corresponding section \(s_\chi\) is
constructed by ordinary line integration.  For nilpotent coverings,
the following theorem plays the analogous role.

\begin{theorem}[\(\pi_1\)-de Rham theorem {\rm \cite{Benardete}}]
\label{pi1derham}
Let \(M\) be a closed manifold, let \(G\) be a connected simply
connected nilpotent Lie group, and let \(\Gamma\) be a lattice in
\(G\).  For a surjective homomorphism
\[
\Phi:\pi_1(M,p)\longrightarrow \Gamma,
\]
there exists a flat \(\mathfrak g\)-valued \(1\)-form \(\omega\) on
\(M\) such that, for every \(\gamma\in\pi_1(M,p)\),
\begin{equation}
\Phi(\gamma)
=
{}_R\!\int_\gamma\omega .
\label{RH}
\end{equation}
\end{theorem}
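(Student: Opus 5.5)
The plan is to build \(\omega\) in two stages. First, realize \(\Phi\) by a smooth equivariant map from the universal cover \(\widetilde M\) into \(G\). Second, pull back the Maurer--Cartan form along that map and read off both flatness and \eqref{RH}.

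\textbf{The equivariant map.} Since \(G\) is simply connected nilpotent, \(\exp:\mathfrak g\to G\) is a diffeomorphism, so \(G\) is contractible. The covering \(\widetilde M\to M\) is a principal \(\pi_1(M,p)\)-bundle, and \(\pi_1(M,p)\) acts on \(G\) by left translation through \(\Phi\). The associated bundle \(\widetilde M\times_{\pi_1}G\to M\) therefore has contractible fibre and admits a smooth section. Equivalently, there is a smooth map \(F:\widetilde M\to G\) with
\[
F(\gamma\cdot x)=\Phi(\gamma)F(x)
\]
for every \(\gamma\in\pi_1(M,p)\), and we normalize \(F(\tilde p)=e\) at a chosen lift \(\tilde p\) of \(p\). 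Concretely, I would build \(F\) on a lift of a fundamental domain, or inductively over the cells of a triangulation, using \(\exp\)-coordinates, where every extension problem is unobstructed. A partition of unity then smooths the result, by averaging in exponential coordinates on a neighborhood of \(F(x)\). The nilpotent structure is not needed for this step beyond contractibility.

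\textbf{The connection form.} Set \(\widetilde\omega=dF\,F^{-1}\), the pullback of the right-invariant Maurer--Cartan form. It is invariant under \(F\mapsto gF\) for constant \(g\), so by equivariance \(\widetilde\omega\) is \(\pi_1\)-invariant and descends to a \(\mathfrak g\)-valued \(1\)-form \(\omega\) on \(M\). The right-invariant Maurer--Cartan equation for \(dF\,F^{-1}\) is exactly \(d\omega-\omega\wedge\omega=0\), i.e.\ \eqref{flatform}; I would check this sign against the convention \(dU=\omega U\) fixed above. Along any path \(\tilde\alpha\) in \(\widetilde M\), the curve \(U(t)=F(\tilde\alpha(t))F(\tilde\alpha(0))^{-1}\) satisfies \(U'=\omega(\alpha')U\) with \(U(0)=e\). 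Uniqueness for this ODE identifies \(U(t)\) with the right Lie integral \({}_R\!\int_0^t\omega(\alpha'(u))\,du\), since the right-ordered product is the Euler scheme for \(dU=\omega U\).

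\textbf{The identity \eqref{RH}.} Take a loop \(\gamma\) at \(p\) and lift it from \(\tilde p\); it ends at \(\gamma\cdot\tilde p\). Then
\[
{}_R\!\int_\gamma\omega=F(\gamma\tilde p)F(\tilde p)^{-1}=\Phi(\gamma).
\]
Homotopy invariance follows automatically, both from flatness and from the formula itself.

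The main obstacle is bookkeeping rather than substance. The left/right conventions must be kept consistent: whether \(\pi_1\) acts on \(\widetilde M\) on the left or right, whether equivariance reads \(\Phi(\gamma)F\) or \(F\Phi(\gamma)^{-1}\), and whether the result is \(\Phi(\gamma)\) versus \(\Phi(\gamma)^{-1}\). I would fix these first by checking the abelian case, where \(\omega\) is a closed form with periods \(\Phi\). Surjectivity of \(\Phi\) and the lattice hypothesis are not needed for existence; they matter only for later uses, for instance to choose \(\omega\) with harmonic abelian part by correcting with \(\exp\) of an exact gauge term.
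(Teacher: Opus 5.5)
Your strategy is the paper's in substance. The paper takes a smooth map $M\to N=G/\Gamma$ inducing $\Phi$ (using that $N$ is a $K(\Gamma,1)$) and pulls back the right Maurer--Cartan form. Lifting that map to $\widetilde M$ gives exactly an equivariant map $\widetilde M\to G$ of the kind you build; your contractible-fibre construction just avoids needing $\Gamma$ discrete.

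However, the descent step fails as you wrote it. The form $dF\,F^{-1}$ is invariant under $F\mapsto Fg$, not under $F\mapsto gF$: for constant $g$,
\[
d(gF)\,(gF)^{-1}=g\,dF\,F^{-1}g^{-1}=\operatorname{Ad}_g\bigl(dF\,F^{-1}\bigr).
\]
Write $\tau_\gamma$ for the deck transformation. Your equivariance $F\circ\tau_\gamma=\Phi(\gamma)F$ then gives $\tau_\gamma^*\widetilde\omega=\operatorname{Ad}_{\Phi(\gamma)}\widetilde\omega$, and this is not $\widetilde\omega$ when $\Gamma$ is nonabelian. Invariance would force $\widetilde\omega$ to take values in the $\operatorname{Ad}(\Gamma)$-fixed subalgebra. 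All transports, in particular all the $\Phi(\gamma)$, would then lie in the corresponding connected subgroup, which centralizes $\Gamma=\Phi(\pi_1(M,p))$. So no form $\omega$ on $M$ is produced. This is exactly why the paper quotients $G$ by \emph{right} translations $g\mapsto g\gamma$, under which $dg\,g^{-1}$ is invariant.

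The repair is to let $\pi_1$ act on the fibre $G$ of your associated bundle by right translations. The section then satisfies $F\circ\tau_\gamma=F\,\Phi(\gamma)^{-1}$, and you can normalize $F(\tilde p)=e$, since left multiplication by a constant preserves this equivariance. With this change:
\begin{itemize}
\item the existence argument is unchanged;
\item $\widetilde\omega=dF\,F^{-1}$ now descends to $M$;
\item the flatness computation agrees with \eqref{flatform};
\item your ODE identification of the right Lie integral is correct.
\end{itemize}

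The endpoint formula then gives ${}_R\!\int_\gamma\omega=F(\tau_\gamma\tilde p)F(\tilde p)^{-1}=\Phi(\gamma)^{-1}$ when loops are multiplied as ``first $\alpha$, then $\beta$''. This inverse is forced, not a slip. Since ${}_R\!\int_{\alpha\beta}\omega=({}_R\!\int_\beta\omega)({}_R\!\int_\alpha\omega)$, the map $\gamma\mapsto{}_R\!\int_\gamma\omega$ is an anti-homomorphism for that product. So \eqref{RH} with $\Phi$ a homomorphism holds precisely for the opposite product on $\pi_1$. Under that product the deck group acts on the right, and the correct equivariance is $F\circ\tau_\gamma=F\,\Phi(\gamma)$.

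The correct pairing is among the alternatives you list at the end, but the argument as written commits to the incompatible one.
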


Thus \(\omega\) is a smooth solution of the Riemann--Hilbert problem
for the monodromy representation \(\Phi\).  Unlike the abelian
harmonic representative, this solution is not unique in general.  For
the analytic applications below, we therefore choose a representative
which is adapted to the Riemannian metric.  This is the nilpotent
analogue of choosing harmonic \(1\)-forms in the abelian case.

We briefly recall the geometric meaning of the theorem.  Fix
\(p\in M\), put \(\Pi=\pi_1(M,p)\), and let \(G\) be the Malcev
completion of \(\Gamma\).  Since \(G\) is connected, simply connected,
and nilpotent, the exponential map
\[
\exp:\mathfrak g\longrightarrow G
\]
is a diffeomorphism.  Hence \(G\) is contractible, and
\[
N=G/\Gamma
\]
is a \(K(\Gamma,1)\)-space.  The homomorphism
\(\Phi:\Pi\to\Gamma\) determines a continuous map
\[
F:(M,p)\longrightarrow (N,\Gamma e),
\]
unique up to homotopy, inducing \(\Phi\) on the fundamental group.
After smoothing \(F\), we pull back the canonical right Maurer--Cartan
form \(\widehat{\omega}\) on \(N\):
\[
\omega=F^*\widehat{\omega}.
\]
This gives the desired flat form.

For the Heisenberg group, using coordinates
\[
g=
\begin{pmatrix}
1 & x & z\\
0 & 1 & y\\
0 & 0 & 1
\end{pmatrix},
\]
the right Maurer--Cartan form is
\begin{equation}
\widehat{\omega}
=
dg\,g^{-1}
=
\begin{pmatrix}
0 & dx & dz-y\,dx\\
0 & 0  & dy\\
0 & 0  & 0
\end{pmatrix}.
\label{formonHeisenberg}
\end{equation}
Its pullback by \(F\) gives a flat form of the form~\eqref{oneform}.

\subsection{Metric-adapted representatives and the lattice normalization}
\label{harmoniclietheory}

For \(\Gamma={\rm Heis}_3(\mathbb Z)\), write the flat form as
\[
\omega=
\begin{pmatrix}
0&\omega_1&\omega_{12}\\
0&0&\omega_2\\
0&0&0
\end{pmatrix}.
\]
The cohomology classes of \(\omega_1\) and \(\omega_2\) are fixed by the
integral abelianized monodromy.  We take their harmonic representatives.  Their
Gram matrix need not be diagonal in an integral lattice basis.  The invariant
leading model therefore uses that positive Gram matrix.  For the explicit
one-dimensional oscillator formulae below we impose the following normal-form
condition.

\begin{conditions}[Heisenberg metric normal form]
\label{harmoniccoexact}
\begin{description}
\item[\rm(1)] The harmonic horizontal forms satisfy
\begin{equation}
(\omega_1,\omega_2)_{L^2(M)}=0.
\label{ortho1}
\end{equation}
This is an additional normalization.  If it is achieved by a real change of
horizontal basis rather than an integral one, the induced change of the
lattice and central coordinate must be tracked in the normalization constants.

\item[\rm(2)] The central form is the coexact solution of
\[
d\omega_{12}=\omega_1\wedge\omega_2,
\qquad \delta\omega_{12}=0.
\]
\end{description}
\end{conditions}

For fixed \(\omega_1,\omega_2\), any two solutions of the curvature equation
differ by a closed one-form.  The coexact solution is orthogonal to the closed
forms and hence has the least \(L^2\)-norm in this affine space.  This is a
gauge normalization of the central component; no assertion that the resulting
map is a harmonic map or an energy-minimizing surface is needed below.

More precisely, on the universal cover \(\widetilde M\), choose a lift
\(\widetilde p_0\) and define the equivariant developing map
\[
\widetilde F(\widetilde p)
={}_R\!\int_{\widetilde p_0}^{\widetilde p}\omega.
\]
Flatness makes it path independent on \(\widetilde M\), and its equivariance
encodes \(\Phi\).  It descends to the classifying map
\(F:M\to G/\Gamma\).  We call this the metric-adapted nilpotent
Abel--Jacobi representative.  The path integral is not, in general, a
globally single-valued \(G\)-valued map on \(M\).

\begin{remark}
\label{harmonicrepresentative}
The orthogonal oscillator normalization is convenient but is not intrinsic.
Without it, the first model coefficient is the symmetrized positive quadratic
form determined by the full horizontal Gram matrix.  All invariant statements
below may be written in that form.
\end{remark}

\begin{remark}
\label{homologyconnection}
The constructions described above can also be reformulated in a more
invariant framework.  In Appendix~\ref{Homologyconnection}, we recall
a formulation in terms of Chen's iterated integrals and homology
connections, which encode the same formal power series connection in a
homological and representation-independent manner.
\end{remark}

\section
[Lowest eigenvalue: Heisenberg case]
{Asymptotic expansion of the lowest eigenvalue of the twisted Laplacian:
the Heisenberg group \({\rm Heis}_3(\mathbb Z)\)}
\label{sectionasym}

In this section we carry out the perturbative computation in the
first nonabelian case,
\[
\Gamma={\rm Heis}_3(\mathbb Z).
\]
The purpose is to show explicitly how the noncommutative
Floquet--Bloch decomposition leads to a model operator.  In the
Heisenberg case this model operator is a modified harmonic oscillator,
and therefore its spectrum can be written down explicitly.

The coefficient calculation below is carried out in the smooth
Schr\"odinger model.  The exact rational fibers justify the lattice trace
identity, Proposition~\ref{model-density} supplies the model density, and
Proposition~\ref{rational-spectral-transfer} transfers every fixed-order heat
or Selberg observable, including its remainder, to the genuine rational
finite-dimensional fibers.  At each displayed order the oscillator zeta sums
are explicit and absolutely convergent.  The infinite-dimensional model is
therefore used only for calculation; every rigorous lattice trace remains
finite-dimensional.

\subsection{Reduction of the twisted Laplacians}
\label{Reductiontwisted}

We first construct the section which trivializes the flat Hilbert
bundle associated with the Schr\"odinger representation.  In the
Heisenberg case we use the notation
\[
\rho_h,\qquad S_h,\qquad U(E_{\rho_h})
\]
instead of the more general notation
\[
\pi_l,\qquad S_{\pi_l},\qquad U(E_{\pi_l}).
\]
The representation space is \(L^2(\mathbb R)\).

Let
\[
\Phi:\pi_1(M)\longrightarrow {\rm Heis}_3(\mathbb Z)
\]
be the surjective homomorphism defining the covering.  By the
\(\pi_1\)-de Rham theorem, choose a flat
\(\mathfrak{heis}_3\)-valued one-form
\[
\omega
=
\begin{pmatrix}
0 & \omega_1 & \omega_{12}\\
0 & 0        & \omega_2\\
0 & 0        & 0
\end{pmatrix}
\]
such that
\[
\Phi(\gamma)
=
{}_R\!\int_\gamma \omega ,
\qquad
\gamma\in\pi_1(M).
\]
We assume that \(\omega_1,\omega_2,\omega_{12}\) satisfy
Conditions~\ref{harmoniccoexact}.  In particular,
\(\omega_1\) and \(\omega_2\) are harmonic and orthogonal, while
\(\omega_{12}\) is coexact.

Let \(\widetilde M\) be the universal cover and fix
\(\widetilde p_0\in\widetilde M\).  For \(h>0\), define the equivariant
unitary gauge
\begin{equation}
S_h(\widetilde p)
=
\rho_h\!\left({}_R\!\int_{\widetilde p_0}^{\widetilde p}\omega\right).
\label{sectionsh}
\end{equation}
It is generally not single valued on \(M\).  We use the right-module
associated-bundle convention fixed by \(\Phi\).  In this convention the
conjugation is governed by the right logarithmic derivative
\(dS_h\,S_h^{-1}=d\rho_h(\omega)\); in a left-module convention one uses
\(S_h^{-1}\) instead.  Multiplication by \(S_h\) identifies twisted sections
with functions in a fixed equivariant model, and the conjugated operator
descends to the fixed Hilbert space \(L^2(M;L^2(\mathbb R))\).  Thus
\(\Delta_{\rho_h}\varphi=\lambda(h)\varphi\), with
\(\varphi=S_hf\) on \(\widetilde M\), is equivalent to
\[
L_hf=\lambda(h)f.
\]
In the formulas below, tildes on lifted points and paths are suppressed.

Using the explicit formula for the right Lie integral in the
Heisenberg group, for \(h>0\) we have
\[
S_h(p)
=
e^{2\pi i\left(
h\int_{p_0}^{p}(\omega_{12}+\omega_2\omega_1)
+
\sqrt h \left(\int_{p_0}^{p}\omega_2\right)s
\right)}
e^{\sqrt h\left(\int_{p_0}^{p}\omega_1\right)\frac{d}{ds}}.
\]
Here \(\int\omega_2\omega_1\) denotes Chen's iterated integral.  The
branch \(h<0\) is the complex-conjugate Schr\"odinger model (equivalently one
may write the formula with \(\sqrt{|h|}\) and the corresponding sign in the
phase).  We compute on \(h>0\) and combine the two signs by taking the real
part.

We use the commutation relation
\begin{equation}
\left[s,e^{\frac{d}{ds}}\right]
=
-e^{\frac{d}{ds}},
\label{sdscommutation}
\end{equation}
or equivalently the infinitesimal form obtained from it.  A direct
calculation gives
\[
\Delta_{\rho_h}(S_h f)
=
S_h L_h f,
\]
where
\begin{equation}
L_h
=
L^{(0)}
+
h^{1/2}L^{(1)}
+
hL^{(2)}
+
h^{3/2}L^{(3)}
+
h^2L^{(4)}.
\label{Lexpansion}
\end{equation}
The operators \(L^{(j)}\) are as follows:
\begin{align}
L^{(0)}f
&=
\Delta_M f,
\label{H0}
\\
L^{(1)}f
&=
-\left\langle
4\pi\iu\,\omega_2 s
+
2\omega_1\frac{d}{ds},
df
\right\rangle,
\label{H1}
\\
L^{(2)}f
&=
\left(
4\pi^2|\omega_2|^2s^2
-
4\pi\iu\langle\omega_2,\omega_1\rangle s\frac{d}{ds}
-
2\pi\iu\langle\omega_2,\omega_1\rangle
-
|\omega_1|^2\left(\frac{d}{ds}\right)^2
\right)f
\notag\\
&\quad
-
4\pi\iu\langle\omega_{12},df\rangle,
\label{H2}
\\
L^{(3)}f
&=
\left(
8\pi^2\langle\omega_{12},\omega_2\rangle s
-
4\pi\iu\left\langle
\omega_{12},
\omega_1\frac{d}{ds}
\right\rangle
\right)f,
\label{H3}
\\
L^{(4)}f
&=
4\pi^2|\omega_{12}|^2f.
\label{H4}
\end{align}

Thus the geometric twisting of the covering is encoded in the
coefficients \(L^{(j)}\).  The first nontrivial term in the spectral
asymptotics will be obtained by averaging \(L^{(2)}\) over the base
manifold \(M\).

\subsection
[Spectrum of twisted Laplacians and Sunada's theorem]
{The spectrum of twisted Laplacians and the Kazhdan distance:
Sunada's theorem}
\label{sunadatheorem}

In the abelian case, only characters near the trivial character
contribute to the leading heat-kernel asymptotics.  The following
result of Sunada gives the corresponding localization principle for
general covering groups.

Let \(X\to M\) be a \(\Gamma\)-covering, and let
\[
\rho:\Gamma\longrightarrow U(V)
\]
be a unitary representation on a separable Hilbert space \(V\).  Let
\(E_\rho\) be the associated flat vector bundle, and let
\(\lambda_0(\rho)\) denote the infimum of the spectrum of the twisted
Laplacian \(\Delta_\rho\) acting on sections of \(E_\rho\).

Fix a finite generating set \(A\) of \(\Gamma\).  The Kazhdan distance
from \(\rho\) to the trivial representation is
\[
\delta_A(\rho,\mathbf 1)
=
\inf_{\|v\|=1}
\sup_{\sigma\in A}
\|\rho(\sigma)v-v\|.
\]

\begin{theorem}[Sunada {\rm \cite{Sunada}}]
\label{sunada}
There exist positive constants \(c_1\) and \(c_2\), depending only on
\(M\) and \(A\), such that for every unitary representation \(\rho\)
of \(\Gamma\),
\begin{equation}
c_1\delta_A(\rho,\mathbf 1)^2
\leq
\lambda_0(\rho)
\leq
c_2\delta_A(\rho,\mathbf 1)^2 .
\label{sunadainequality}
\end{equation}
\end{theorem}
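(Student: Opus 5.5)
The statement is Sunada's two-sided comparison between the bottom of the spectrum of the twisted Laplacian and the squared Kazhdan distance. I would prove the two inequalities separately. In both directions I would pass to the flat-bundle picture over a fundamental domain. Fix a connected relatively compact fundamental domain \(D\subset X\) whose translates \(\sigma D\), \(\sigma\in A\), are its neighbours; enlarging \(A\) if necessary, which changes \(\delta_A\) only by constant factors, I may assume these are exactly the neighbours. Sections of \(E_\rho\) are identified with \(\Gamma\)-equivariant maps \(\varphi:X\to V\), \(\varphi(\gamma x)=\rho(\gamma)\varphi(x)\). The min--max principle gives
\[
\lambda_0(\rho)=\inf_{\varphi\neq0}\frac{\int_D|\nabla\varphi|^2}{\int_D|\varphi|^2}.
\]

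\emph{Upper bound.} Take a unit vector \(v\) nearly realizing \(\delta=\delta_A(\rho,\mathbf 1)\), and a fixed smooth partition of unity \(\{\psi\circ\gamma^{-1}\}_{\gamma\in\Gamma}\) on \(X\) with \(\psi\) supported near \(D\). Define
\[
\varphi(x)=\sum_\gamma\psi(\gamma^{-1}x)\,\rho(\gamma)v ,
\]
which is equivariant. On \(D\), \(\sum_\gamma d\psi(\gamma^{-1}x)=0\), so
\[
\nabla\varphi=\sum_\gamma d\psi(\gamma^{-1}x)\bigl(\rho(\gamma)v-v\bigr).
\]
Only \(\gamma\in A\cup\{e\}\) occur, so \(|\nabla\varphi|\le C\delta\) pointwise. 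For the denominator, \(\varphi=v+O(\delta)\) on \(D\), hence \(\int_D|\varphi|^2\ge \operatorname{vol}(D)/2\) once \(\delta\) is small. When \(\delta\) is not small, the bound is trivial because \(\lambda_0(\rho)\) is bounded above uniformly: \(\delta\le2\), and \(\lambda_0\) is at most the Dirichlet eigenvalue of a small ball. This gives \(\lambda_0\le c_2\delta^2\).

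\emph{Lower bound.} This is the main step. Given an equivariant \(\varphi\) with \(\int_D|\varphi|^2=1\), set \(v=\operatorname{vol}(D)^{-1}\int_D\varphi\). I would apply the Poincaré inequality on the compact connected set \(D'=D\cup\bigcup_{\sigma\in A}\sigma D\), which has Lipschitz boundary after choosing \(D\) suitably, for example as a union of cells of a triangulation lifted from \(M\). It gives
\[
\int_{D'}|\varphi-v|^2\le C\int_{D'}|\nabla\varphi|^2=C'\int_D|\nabla\varphi|^2 ,
\]
where the last equality uses equivariance and unitarity. Equivariance also gives \(\varphi|_{\sigma D}=\rho(\sigma)\varphi|_D\) after the translation. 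Comparing the averages on \(D\) and \(\sigma D\) then yields
\[
\|\rho(\sigma)v-v\|^2\le C\int_D|\nabla\varphi|^2 .
\]
If \(\int_D|\nabla\varphi|^2\) is small, then \(\|v\|\) is close to \(\operatorname{vol}(D)^{-1/2}\) by the same Poincaré estimate. Normalizing \(v\) gives \(\delta^2\le C\int_D|\nabla\varphi|^2\). If \(\int_D|\nabla\varphi|^2\) is not small, the inequality holds trivially since \(\delta\le2\). Taking the infimum over \(\varphi\) yields \(c_1\delta^2\le\lambda_0\).

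The delicate points are the uniformity of the constants in \(\rho\) and the handling of non-separable situations. Neither is a real issue here. The constants come only from scalar Poincaré and Sobolev inequalities on \(D'\), which hold verbatim for \(V\)-valued functions applied coordinatewise in an orthonormal basis, so they depend only on \(M\) and \(A\). In the graph case the same argument runs with discrete gradients.
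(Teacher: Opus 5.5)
The paper does not prove this statement. It quotes it from \cite{Sunada} and uses it only to localize near the trivial representation.

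Your argument is correct and is the classical proof of the inequality. For the upper bound, the equivariant test sections $\sum_\gamma\psi(\gamma^{-1}x)\rho(\gamma)v$ built from a $\Gamma$-partition of unity do the job. For the lower bound, you combine a Poincar\'e inequality on a fundamental domain and its neighbours with the fact that the mean of $\varphi$ over $\sigma D$ is $\rho(\sigma)$ applied to its mean over $D$. All constants are scalar, hence independent of $\rho$.

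Three points deserve an explicit sentence.

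First, replacing $A$ by the neighbour set $A_D=\{\sigma\neq e:\sigma\bar D\cap\bar D\neq\emptyset\}$ is legitimate for two reasons. The set $A_D$ generates $\Gamma$, since $X$ is connected and $\bar D$ is compact. Moreover $\|\rho(\sigma\tau)v-v\|\le\|\rho(\sigma)v-v\|+\|\rho(\tau)v-v\|$, so $\delta_A$ and $\delta_{A_D}$ are comparable. \emph{Enlarging} is not quite the right word, since $A$ may contain non-neighbours.

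Second, the Lipschitz-boundary claim for $D'$ is not needed, and it can fail where cells of different translates meet along $\partial D'$ only at a vertex. What you need instead is this:
\begin{itemize}
\item The interior of $\bar D'$ is connected. If $p\in\bar D\cap\sigma\bar D$, every top cell of the open star of $p$ lies in some $\tau\bar D$ with $\tau\in A_D\cup\{e\}$, so that star is a connected open set inside $\bar D'$ meeting both $D$ and $\sigma D$.
\item Rellich compactness on each simplex, together with this connectedness, gives the scalar Poincar\'e inequality on $D'$. It also holds with the mean taken over $D$ rather than $D'$.
\end{itemize}

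Third, in the large-$\delta$ case of the upper bound, what you actually use is $\delta\ge\delta_0$ together with the uniform bound $\lambda_0(\rho)\le\lambda_1^{\rm Dir}(B)$ for an evenly covered ball $B$. The bound $\delta\le2$ belongs to the large-energy case of the lower bound.
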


Consequently, the small spectrum of the twisted Laplacian can occur
only near the trivial representation.  In the Heisenberg computation
below, this justifies focusing on the behavior of \(L_h\) as
\(h\to 0\).

\subsection{The second coefficient of the lowest eigenvalues}
\label{subsection72}

We now study the eigenvalue problem
\begin{equation}
L_h f=\lambda(h)f,
\qquad
f\in L^2(M,L^2(\mathbb R)).
\label{Heis-eigen}
\end{equation}
The following recursion is performed in the smooth Schr\"odinger normal
form.  At a rational parameter \(h=p/q\), however, the decomposition into
finite-dimensional lattice fibers is exact, and the fluctuation of the
normalized spectral observable is controlled by \(q\) alone.
Proposition~\ref{rational-spectral-transfer} therefore transfers every fixed
truncation and its remainder to the exact finite-dimensional trace.  The
infinite multiplicity at \(h=0\) in the ambient model is a feature of the
computational normal form, not a source of infinite-dimensional traces in the
rigorous argument.

Let \(\lambda_{k,i}(h)\) denote the eigenvalues which converge to the
\(k\)-th eigenvalue of \(\Delta_M\) as \(h\to 0\).  We are interested
in the cluster over the zero eigenvalue of \(\Delta_M\).  We write
\[
\lambda_i(h):=\lambda_{0,i}(h),
\qquad
f_{i,h}:=f_{0,i,h}.
\]
By Sunada's theorem, \(\lambda_i(h)\geq 0\), and in a neighborhood of
\(h=0\) the eigenvalue can vanish only at \(h=0\).

Put \(\varepsilon=\sqrt h\).  We expand
\begin{align}
\lambda_i(h)
&=
\sum_{j=0}^{\infty}
\lambda_i^{(j)}\varepsilon^j,
\notag\\
f_{i,h}
&=
\sum_{j=0}^{\infty}
f_i^{(j)}\varepsilon^j .
\label{eig-expansion}
\end{align}
At \(h=0\), the operator is \(L^{(0)}=\Delta_M\).  Hence
\[
\lambda_i^{(0)}=0,
\qquad
f_i^{(0)}(x,s)=f_i^{(0)}(s),
\]
so the leading eigenfunction is independent of \(x\in M\).  Moreover,
minimality gives
\[
\lambda_i^{(1)}=0.
\]

Comparing the coefficients of \(\varepsilon^2\) in
\[
L_h f_{i,h}=\lambda_i(h)f_{i,h},
\]
we obtain
\begin{equation}
L^{(2)}f_i^{(0)}
+
L^{(1)}f_i^{(1)}
+
L^{(0)}f_i^{(2)}
=
\lambda_i^{(2)}f_i^{(0)}.
\label{second}
\end{equation}
After integrating over \(M\), the terms involving \(L^{(1)}\) and
\(L^{(0)}\) vanish.  Indeed, \(\omega_1,\omega_2,\omega_{12}\) are
coclosed and
\[
\int_M \Delta_M u\,dv_g=0.
\]
Thus
\begin{align}
\int_M L^{(2)}f_i^{(0)}\,dv_g
&=
\left(
-\|\omega_1\|_{L^2(M)}^2
\left(\frac{d}{ds}\right)^2
+
4\pi^2\|\omega_2\|_{L^2(M)}^2s^2
\right)f_i^{(0)}
\notag\\
&=:
\mathcal H f_i^{(0)}.
\label{Heisenharmonic}
\end{align}
Here we used the orthogonality
\[
(\omega_1,\omega_2)_{L^2(M)}=0.
\]

The operator
\begin{equation}
\mathcal H
=
-\|\omega_1\|_{L^2(M)}^2
\left(\frac{d}{ds}\right)^2
+
4\pi^2\|\omega_2\|_{L^2(M)}^2s^2
\label{HC}
\end{equation}
is the modified harmonic oscillator.  Let
\[
\mathcal H\psi_i=\mu_i\psi_i,
\qquad
\|\psi_i\|_{L^2(\mathbb R)}=1,
\qquad
i=0,1,2,\ldots .
\]
We choose
\[
f_i^{(0)}(x,s)
=
\frac{1}{\sqrt{\mathrm{vol}(M)}}\psi_i(s).
\]
Then
\begin{equation}
\mathrm{vol}(M)\lambda_i^{(2)}
=
\mu_i.
\label{lambda2-general}
\end{equation}
With the normalization used here,
\begin{equation}
\mu_i
=
2\pi\|\omega_1\|_{L^2(M)}
      \|\omega_2\|_{L^2(M)}
(2i+1),
\label{harmeigen}
\end{equation}
Therefore,
\[
\lambda_i(h)
=
\frac{\mu_i}{\mathrm{vol}(M)}\,h
+
O(h^{3/2}).
\]

\subsection{Higher-order terms: an algorithm for computation}
\label{Higher}

We now describe the recursion for the higher model coefficients.  It is a
variant of Rayleigh--Schr\"odinger bookkeeping combined with the recursive
procedure of~\cite{KotaniSunada1}.  For each prescribed finite order,
Proposition~\ref{rational-spectral-transfer} identifies the resulting heat and
Selberg observables, together with their remainder, with the exact rational
finite-dimensional fibers.

We write
\begin{align}
f_{i,h}(x,s)
&=
\sum_{j=0}^{\infty} f_i^{(j)}(x,s)h^{j/2}
=
\sum_{j=0}^{\infty}
\sum_{k=0}^{\infty}
a_{ik}^{(j)}(x)\psi_k(s)h^{j/2},
\notag\\
\lambda_i(h)
&=
\sum_{j=0}^{\infty}\lambda_i^{(j)}h^{j/2}.
\label{asym-series}
\end{align}
The initial data are
\begin{equation}
a_{ik}^{(0)}(x)
=
\frac{\delta_{ik}}{\sqrt{\mathrm{vol}(M)}},
\qquad
\lambda_i^{(0)}=\lambda_i^{(1)}=0,
\qquad
\lambda_i^{(2)}
=
\frac{\mu_i}{\mathrm{vol}(M)}.
\label{initial-data}
\end{equation}

\subsubsection*{Step 1: Differential equations for the coefficients}

Assume that \(\lambda_i^{(j)}\) and \(a_{ik}^{(j)}\) have been
computed for lower orders.  Comparing the coefficient of \(h^{n/2}\)
in the eigenvalue equation gives
\begin{equation}
\sum_{r=0}^{\min(n,4)}
L^{(r)}f_i^{(n-r)}
=
\sum_{r=0}^{n}
\lambda_i^{(r)}f_i^{(n-r)}.
\label{coefficient-equation}
\end{equation}
Taking the inner product with \(\psi_k\) in \(L^2(\mathbb R)\), and
using \(L^{(0)}=\Delta_M\), we obtain
\begin{align}
\Delta_M a_{ik}^{(n)}
-
\frac{\lambda_i^{(n)}}{\sqrt{\operatorname{vol}(M)}}\delta_{ik}
&=
-\sum_{r=1}^{\min(n,4)}
\left(
L^{(r)}f_i^{(n-r)},\psi_k
\right)_{L^2(\mathbb R)}
\notag\\
&\quad
+
\sum_{r=0}^{n-1}
\lambda_i^{(r)}a_{ik}^{(n-r)} .
\label{aij}
\end{align}
This is the basic recursive equation for \(a_{ik}^{(n)}\).

To compute the right-hand side explicitly, introduce the creation and
annihilation operators associated with \(\mathcal H\).  Put
\[
p=\frac{1}{\iu}\frac{d}{ds}.
\]
Define
\begin{align}
a
&=
\sqrt{\frac{\pi\|\omega_2\|_{L^2(M)}}{\|\omega_1\|_{L^2(M)}}}\,s
+
i\sqrt{\frac{\|\omega_1\|_{L^2(M)}}{4\pi\|\omega_2\|_{L^2(M)}}}\,p,
\label{annihilationop}
\\
a^\dagger
&=
\sqrt{\frac{\pi\|\omega_2\|_{L^2(M)}}{\|\omega_1\|_{L^2(M)}}}\,s
-
i\sqrt{\frac{\|\omega_1\|_{L^2(M)}}{4\pi\|\omega_2\|_{L^2(M)}}}\,p.
\label{creationop}
\end{align}
Then
\[
\mathcal H
=
4\pi\|\omega_1\|_{L^2(M)}
     \|\omega_2\|_{L^2(M)}
\left(a^\dagger a+\frac12\right),
\]
and
\[
a\psi_m=\sqrt m\,\psi_{m-1},
\qquad
a^\dagger\psi_m=\sqrt{m+1}\,\psi_{m+1}.
\label{aadagger}
\]

Consequently,
\begin{align}
s\psi_m
&=
\sqrt{
\frac{\|\omega_1\|_{L^2(M)}}{4\pi\|\omega_2\|_{L^2(M)}}
}
\left(
\sqrt m\,\psi_{m-1}
+
\sqrt{m+1}\,\psi_{m+1}
\right),
\label{saction}
\\
p\psi_m
&=
\frac{1}{\iu}
\sqrt{
\frac{\pi\|\omega_2\|_{L^2(M)}}{\|\omega_1\|_{L^2(M)}}
}
\left(
\sqrt m\,\psi_{m-1}
-
\sqrt{m+1}\,\psi_{m+1}
\right).
\label{daction}
\end{align}
Similarly,
\begingroup\small
\begin{align}
s^2\psi_m
&=
\frac{\|\omega_1\|_{L^2(M)}}{4\pi\|\omega_2\|_{L^2(M)}}
\left(
\sqrt{m(m-1)}\,\psi_{m-2}
+
(2m+1)\psi_m
+
\sqrt{(m+1)(m+2)}\,\psi_{m+2}
\right),
\label{ssquare}
\\
s\frac{d}{ds}\psi_m
&=
\frac12
\left(
\sqrt{m(m-1)}\,\psi_{m-2}
-
\psi_m
-
\sqrt{(m+1)(m+2)}\,\psi_{m+2}
\right),
\label{sds}
\\
\left(\frac{d}{ds}\right)^2\psi_m
&=
\frac{\pi\|\omega_2\|_{L^2(M)}}{\|\omega_1\|_{L^2(M)}}
\left(
\sqrt{m(m-1)}\,\psi_{m-2}
-
(2m+1)\psi_m
+
\sqrt{(m+1)(m+2)}\,\psi_{m+2}
\right).
\label{ddssquare}
\end{align}
\endgroup

Since each \(L^{(r)}\) is at most quadratic in \(s\) and
\(d/ds\), only the coefficients of
\[
\psi_{k-2},\ \psi_{k-1},\ \psi_k,\ \psi_{k+1},\ \psi_{k+2}
\]
can contribute to the \(k\)-th component in~\eqref{aij}.  Therefore
the right-hand side of~\eqref{aij} is computable explicitly at each
order.

\subsubsection*{Step 2: Computation of \(\lambda_i^{(n)}\)}

Let \(V=\operatorname{vol}(M)\).  Integrating~\eqref{aij} over \(M\) and
taking \(k=i\) gives
\begin{align}
\lambda_i^{(n)}
&=
\frac{1}{\sqrt V}
\int_M
\sum_{r=1}^{\min(n,4)}
\left(L^{(r)}f_i^{(n-r)},\psi_i\right)_{L^2(\mathbb R)}\,dv_g
\notag\\
&\quad-
\frac{1}{\sqrt V}
\int_M
\sum_{r=0}^{n-1}\lambda_i^{(r)}
\left(f_i^{(n-r)},\psi_i\right)_{L^2(\mathbb R)}\,dv_g.
\label{lambda}
\end{align}
The factor \(V^{-1/2}\) comes from
\(f_i^{(0)}=V^{-1/2}\psi_i\).  The right-hand side is known by induction
once the lower coefficients have been fixed.

\subsubsection*{Step 3: Computation of \(a_{ik}^{(n)}\)}

Assume that \(\lambda_i^{(j)}\), \(1\leq j\leq n\), and the coefficients
\(a_{ik}^{(j)}(x)\), \(1\leq j\leq n-1\), are already known.  Equation
\eqref{aij} determines the zero-average part of \(a_{ik}^{(n)}\) after
\(\lambda_i^{(n)}\) is known.  The only remaining datum is therefore the
matrix of averages
\[
B^{(n)}=\bigl([a]_{ik}^{(n)}\bigr),
\qquad
[a]_{ik}^{(n)}=
\int_M a_{ik}^{(n)}(x)\,dv_g .
\]
We decompose
\[
B^{(n)}=B^{(n)}_{\rm herm}+B^{(n)}_{\rm skew},
\]
where
\[
B^{(n)}_{\rm herm}
=\frac12\bigl(B^{(n)}+B^{(n)*}\bigr),
\qquad
B^{(n)}_{\rm skew}
=\frac12\bigl(B^{(n)}-B^{(n)*}\bigr).
\]

\paragraph{Substep 3.1: removing the skew-Hermitian ambiguity.}
The skew-Hermitian part of \(B^{(n)}\) is a choice of orthonormal frame in
the formal eigenbasis.  We explain why it can be removed without changing
any lower-order coefficient.

Assume inductively that
\[
B^{(j)}_{\rm skew}=0,
\qquad j<n.
\]
Put \(\epsilon=h^{1/2}\) and \(V=\operatorname{vol}(M)\).  Define
\[
U^{(n)}(\epsilon)
=
\exp\left(-V^{-1/2}B^{(n)}_{\rm skew}\epsilon^n\right).
\]
Since \(B^{(n)}_{\rm skew}\) is skew-Hermitian, \(U^{(n)}(\epsilon)\) is
unitary.  Let \(F_h=(f_{0,h},f_{1,h},\ldots)^t\) be the formal eigenframe and
replace it by
\[
\widetilde F_h=U^{(n)}(\epsilon)F_h.
\]
The orthonormality condition is preserved.  Moreover,
\[
U^{(n)}(\epsilon)=I-V^{-1/2}B^{(n)}_{\rm skew}\epsilon^n+O(\epsilon^{2n}),
\]
so every coefficient of order \(<n\) remains unchanged.  Since
\[
f_j^{(0)}(x,s)=V^{-1/2}\psi_j(s),
\]
the order \(n\) coefficient changes by
\[
\widetilde a_{ik}^{(n)}(x)
=
a_{ik}^{(n)}(x)-V^{-1}[a]_{ik}^{(n),\rm skew}.
\]
Consequently
\[
\int_M \widetilde a_{ik}^{(n)}(x)\,dv_g
=
[a]_{ik}^{(n)}-[a]_{ik}^{(n),\rm skew}
=
[a]_{ik}^{(n),\rm herm}.
\]
Thus, after this unitary re-choice of the formal eigenframe, we may assume
\[
B^{(n)}_{\rm skew}=0,
\qquad
B^{(n)}=B^{(n)}_{\rm herm}.
\]
This is the same freedom as the usual unitary choice of a basis in a
perturbing eigenspace; here it is written in the oscillator coefficient
notation in order to keep track of the averages over \(M\).

\paragraph{Substep 3.2: computing the Hermitian average.}
We now use the normalization condition
\[
(f_{i,h},f_{k,h})_{L^2(M\times\mathbb R)}=\delta_{ik}.
\]
Taking the coefficient of \(\epsilon^n\) gives
\[
\sum_{r=0}^{n}
\bigl(f_i^{(r)},f_k^{(n-r)}\bigr)_{L^2(M\times\mathbb R)}=0.
\]
The terms \(r=0\) and \(r=n\) are
\[
\bigl(f_i^{(n)},f_k^{(0)}\bigr)
=
V^{-1/2}\int_M a_{ik}^{(n)}(x)\,dv_g,
\]
and
\[
\bigl(f_i^{(0)},f_k^{(n)}\bigr)
=
V^{-1/2}\overline{\int_M a_{ki}^{(n)}(x)\,dv_g}.
\]
Since \(B^{(n)}\) has been chosen Hermitian, these two terms are equal.
Hence
\[
2V^{-1/2}\int_M a_{ik}^{(n)}(x)\,dv_g
+
\sum_{r=1}^{n-1}
\bigl(f_i^{(r)},f_k^{(n-r)}\bigr)=0.
\]
Writing
\[
f_i^{(r)}(x,s)=\sum_{p=0}^{\infty}a_{ip}^{(r)}(x)\psi_p(s)
\]
and using the orthonormality of the oscillator basis, we obtain
\begin{equation}
\int_M a_{ik}^{(n)}(x)\,dv_g
=
-\frac12\sqrt{\operatorname{vol}(M)}
\int_M
\sum_{r=1}^{n-1}
\sum_{p=0}^{\infty}
 a_{ip}^{(r)}(x)\overline{a_{kp}^{(n-r)}(x)}\,dv_g .
\label{computationofaikn}
\end{equation}
Thus the average matrix \(B^{(n)}\) is determined entirely by lower-order
coefficients.

Finally, equation~\eqref{aij} determines the zero-average part of
\(a_{ik}^{(n)}(x)\).  More explicitly, after subtracting the average given in
\eqref{computationofaikn}, one applies the Green operator \(G_M\) of
\(\Delta_M\) to the zero-average right-hand side.  Therefore
\(a_{ik}^{(n)}(x)\) is obtained from lower-order data, from
\(\lambda_i^{(j)}\) for \(j\leq n\), and from the Green operator on \(M\).

\subsection{The fourth coefficient}
\label{concrete}

We record the first nontrivial correction after the leading harmonic
oscillator term.  From the preceding computation,
\begin{align}
\lambda_i^{(0)}
&=
\lambda_i^{(1)}
=
0,
\notag\\
\lambda_i^{(2)}
&=
\frac{\mu_i}{\mathrm{vol}(M)}
=
\frac{2\pi\|\omega_1\|_{L^2(M)}
      \|\omega_2\|_{L^2(M)}}
     {\mathrm{vol}(M)}
(2i+1),
\label{concreteprepare}
\\
f_i^{(0)}
&=
\frac{1}{\sqrt{\mathrm{vol}(M)}}\psi_i.
\end{align}
By the symmetry of the eigenvalue branches with respect to the
parameter \(h\) at \(h=0\), the odd coefficients vanish:
\begin{equation}
\lambda_i^{(2k+1)}=0.
\label{vanishodd}
\end{equation}
This symmetry may also be expressed by the parity of the oscillator
normal form.  The first correction is therefore \(\lambda_i^{(4)}\).

We first make the second coefficient of the eigenfunction explicit in
a form which is independent of any further compression of the oscillator
algebra.  Put
\begin{equation}
\mathcal K_i
=
\{k\geq0:\ k=i,\ i-2,\ i+2\},
\label{Ki-set}
\end{equation}
where negative indices are omitted.  For \(k\in\mathcal K_i\), set
\begin{equation}
C^{(2)}_{ki}(x)
:=
\left(
L^{(2)}f_i^{(0)},\psi_k
\right)_{L^2(\mathbb R)}(x),
\qquad
R^{(2)}_{ik}(x)
:=
\frac{\lambda_i^{(2)}}{\sqrt{\operatorname{vol}(M)}}\delta_{ik}-C^{(2)}_{ki}(x).
\label{C2R2}
\end{equation}
For \(k\notin\mathcal K_i\) the coefficient \(C^{(2)}_{ki}\) is zero,
because \(L^{(2)}\) is quadratic in the oscillator variables.  The
choice of \(\lambda_i^{(2)}\) gives
\[
\int_M R^{(2)}_{ii}(x)\,dv_g=0,
\]
and the off-diagonal terms have zero prescribed average at this order by
the normalization formula in Step~3, since \(f_i^{(1)}=0\).  Hence
\begin{equation}
a_{ik}^{(2)}(x)
=
G_M R^{(2)}_{ik}(x),
\qquad
k\in\mathcal K_i,
\label{a2-matrix}
\end{equation}
and \(a_{ik}^{(2)}=0\) for \(k\notin\mathcal K_i\).  Thus
\begin{equation}
f_i^{(2)}(x,s)
=
\sum_{k\in\mathcal K_i}
G_M R^{(2)}_{ik}(x)\psi_k(s).
\label{fi2}
\end{equation}
This is already an explicit formula: the only inverse operator which
appears is the Green operator of \(\Delta_M\); the oscillator matrix
elements are finite and are computed from the creation and annihilation
relations \eqref{saction}--\eqref{ddssquare}.

To display the fourth coefficient without hiding the metric dependence in a
long scalar expression, define, for a smooth function \(a\) on \(M\),
\begin{equation}
\mathcal C^{(2)}_{ik}[a](x)
:=
\left(
L^{(2)}(a(x)\psi_k),\psi_i
\right)_{L^2(\mathbb R)}(x),
\label{C2-operator}
\end{equation}
and also
\begin{equation}
C^{(4)}_{ii}(x)
:=
\left(
L^{(4)}f_i^{(0)},\psi_i
\right)_{L^2(\mathbb R)}(x)
=
\frac{4\pi^2}{\sqrt{\operatorname{vol}(M)}}|\omega_{12}(x)|^2.
\label{C4ii}
\end{equation}
The notation \(\mathcal C^{(2)}_{ik}[a]\) keeps the first-order base
term \(-4\pi\iu\langle\omega_{12},d(a\psi_k)\rangle\) in
\(L^{(2)}\).  In integrals this term may be simplified using
\(\delta\omega_{12}=0\), but keeping it in \(\mathcal C^{(2)}\) avoids
premature cancellations and fixes the convention unambiguously.

Using \eqref{lambda}, \eqref{vanishodd}, and \(f_i^{(1)}=0\), the
fourth coefficient is
\begingroup\small
\begin{align}
\lambda_i^{(4)}
&=
\frac{1}{\sqrt{\operatorname{vol}(M)}}
\int_M
\left(
\big((L^{(2)}-\lambda_i^{(2)})f_i^{(2)}
+
L^{(4)}f_i^{(0)}\big),
\psi_i
\right)_{L^2(\mathbb R)}
\,dv_g
\notag\\
&=
\frac{1}{\sqrt{\operatorname{vol}(M)}}
\left\{
\sum_{k\in\mathcal K_i}
\int_M
\mathcal C^{(2)}_{ik}
\bigl[G_MR^{(2)}_{ik}\bigr](x)\,dv_g
-
\lambda_i^{(2)}
\int_M
G_MR^{(2)}_{ii}(x)\,dv_g
+
\int_M C^{(4)}_{ii}(x)\,dv_g
\right\}.
\label{lambda4-matrix}
\end{align}
\endgroup
With our normalization of \(G_M\), the middle integral vanishes, but it is
left in \eqref{lambda4-matrix} to show exactly where the normalization of
\(f_i^{(2)}\) enters.  Equations \eqref{C2R2}, \eqref{fi2}, and
\eqref{lambda4-matrix} are the fourth-order computation carried out to the
point needed below.  Expanding the finite oscillator matrix elements further
would give a longer formula in the pointwise functions
\(|\omega_1|^2\), \(|\omega_2|^2\), \(\langle\omega_1,\omega_2\rangle\),
\(|\omega_{12}|^2\), and their Green potentials.  We keep the
matrix-element form because it is invariant under the chosen oscillator
normalization and because it is the form actually used in the subsequent heat
coefficient.

Thus the coefficient \(\lambda_i^{(4)}\), and similarly every higher
coefficient, is obtained by a finite procedure from the Riemannian geometry
of \(M\), the harmonic and coexact components of the nilpotent
Abel--Jacobi form \(\omega\), the Green operator \(G_M\), and the oscillator
algebra of the Heisenberg fiber.

\section
[Lowest eigenvalue: nilpotent case]
{Asymptotic expansion of the lowest eigenvalue of the twisted Laplacian
for general nilpotent groups}
\label{asymnilpotent}

The explicit computations in the previous section rely on the special fact that
the Heisenberg model operator is a one-dimensional harmonic oscillator.  For a
general nilpotent group the same finite-dimensional Pytlik-type mechanism is
available, but the Kirillov normal form usually gives a higher-dimensional
polynomial differential operator rather than a closed-form oscillator.

The exact trace identity is still applied only to finite-dimensional rational
representations of the lattice.  Proposition~\ref{rational-spectral-transfer}
passes their large-denominator observables to the smooth Kirillov normal form,
and Proposition~\ref{model-density} evaluates the resulting continuous model
observables by the ordinary homogeneous density; no sigma-additive replacement
of the Pytlik functional is made.  The purpose of this section is to construct
the positive Rockland model and to prove, at trace level, the all--order
coefficient-weighted summability needed after the radial Laplace integral.

\subsection{General strategy for nilpotent coverings}
\label{generalnilpotentstrategy}

Let \(\Gamma\) be a finitely generated torsion-free nilpotent group,
and let \(G\) be its Malcev completion.  The guiding principle is that
the small spectrum of the twisted Laplacians is controlled by
representations of \(G\) near the trivial representation, together
with the rational finite-dimensional fibers obtained by restricting
them to \(\Gamma\).

There are two complementary ways to use this principle.  We also use the
concrete construction of the model operator recalled in the companion
foundation paper.  In brief, one chooses a Malcev basis adapted to the lower
central filtration, passes to the associated stratified nilpotent model, takes
the first layer generated by the horizontal directions, and evaluates the
differentials of the corresponding Kirillov representation.  The canonical
operator has the form
\[
   \mathcal H^G=-\sum_j \|\omega_j\|_{L^2(M)}^2 d\pi_l(X_j)^2,
\]
up to the normalization of the chosen horizontal basis.  In examples this gives
the harmonic oscillator for the Heisenberg group and the quartic oscillator for
the Engel group.

The first approach is spectral.  One constructs the hypoelliptic
model operator \(\mathcal H^G\) attached to the relevant
representation of \(G\), and then uses subelliptic estimates and
compactness of the resolvent to obtain its discrete spectrum.  This
is the appropriate framework for proving that the leading spectral
zeta value and the higher coefficients are well defined.

The second approach is computational.  After choosing an explicit
model for the representation space, the operator \(\mathcal H^G\) and
all higher perturbation terms can be written as finite sums of
polynomials in position and differential operators.  With respect to
a multidimensional oscillator basis, these operators become
finite-width matrices.  This gives an algebraic algorithm for
computing the perturbation coefficients, even when the spectrum cannot
be diagonalized in closed form.

These two approaches are compatible.  The spectral approach gives the
analytic foundation, while the oscillator-basis approach gives an
effective computational description.

We use the phrase ``Kirillov parameter space'' as shorthand for a measurable
cross-section in \(\mathfrak g^*\) for the coadjoint orbits appearing in the
Kirillov--Fujiwara Plancherel formula; this is the standard orbit-method
viewpoint, as in Corwin--Greenleaf~\cite{CorwinGreenleaf}.  The lower-central
filtration has an associated graded, stratified group
\(\mathbb G=\operatorname{gr}(G)\), whose homogeneous dimension is the
polynomial growth degree \(Q=d\).  The first averaged operator below is the
representation image of a positive Rockland sub-Laplacian on \(\mathbb G\).
We use this fact, rather than bandedness alone, to control its spectrum and all
coefficient-weighted traces; see
\cite{FischerRuzhanskySobolev,FischerRuzhanskyBook,terElstRobinson}, as well as
Helffer--Nourrigat~\cite{HelfferNou} and Nourrigat~\cite{Nourrigat}.

\subsection{Hypoelliptic operators from nilpotent representations}
\label{spectralhypoelliptic}

We recall the construction in the form needed below.  Let
\[
\Phi:\pi_1(M)\longrightarrow \Gamma
\]
be the homomorphism defining the covering.  By the
\(\pi_1\)-de Rham theorem, choose a flat
\(\mathfrak g\)-valued \(1\)-form \(\omega\) on \(M\) such that
\[
\Phi(\gamma)
=
{}_R\!\int_\gamma\omega,
\qquad
\gamma\in\pi_1(M).
\]
The form \(\omega\) may be chosen in a metric-adapted harmonic form,
as explained in Section~\ref{harmoniclietheory} and
Appendix~\ref{Homologyconnection}.

Let \(\pi_l\) be an irreducible unitary representation of \(G\)
corresponding to a generic coadjoint orbit.  On the universal cover, define the equivariant gauge
\begin{equation}
S_{\pi_l}(\widetilde p)
=
\pi_l\!\left({}_R\!\int_{\widetilde p_0}^{\widetilde p}\omega\right).
\label{sectionpil}
\end{equation}
It generalizes the Heisenberg gauge and is not, in general, a globally
single-valued operator-valued map on \(M\).  It transfers the twisted
Laplacian to a fixed-domain model which descends to the associated Hilbert
space.  Tildes are suppressed below when no ambiguity can arise.

Choose a vector-space complement
\[
V_1\subset\mathfrak g,
\qquad
\mathfrak g=V_1\oplus[\mathfrak g,\mathfrak g],
\]
adapted to the Malcev filtration, and choose a basis
\[
X_1,\ldots,X_k
\]
of \(V_1\).  Equivalently, on the associated graded algebra this is the
usual first layer; in the stratified examples it is the canonical horizontal
layer.  Let
\[
\omega_1,\ldots,\omega_k
\]
be the corresponding harmonic components of \(\omega\).  After
introducing the natural scaling parameter \(\hbar\) and expanding the
conjugated twisted Laplacian near the trivial representation, the
first nonzero term is the hypoelliptic operator
\begin{equation}
\mathcal H^G
=
-\sum_{j=1}^{k}
\|\omega_j\|_{L^2(M)}^2\,d\pi_l(X_j)^2 ,
\label{generalhypo2}
\end{equation}
up to the sign convention for the Laplacian.  More generally, if the
basis is not orthogonal, the operator contains the positive quadratic
form
\[
-\frac12\sum_{i,j=1}^{k}
(\omega_i,\omega_j)_{L^2(M)}
\bigl(d\pi_l(X_i)d\pi_l(X_j)+d\pi_l(X_j)d\pi_l(X_i)\bigr).
\]
In the Heisenberg case, after choosing an orthogonal basis, this
operator is precisely the modified harmonic oscillator
\[
-\|\omega_1\|_{L^2(M)}^2\frac{d^2}{ds^2}
+
4\pi^2\|\omega_2\|_{L^2(M)}^2s^2.
\]

Let
\[
 \mathbb G=\operatorname{gr}(G),\qquad Q=d,
\]
and let \(q_{ij}=(\omega_i,\omega_j)_{L^2(M)}\).  On the first layer of
\(\operatorname{Lie}(\mathbb G)\) define
\begin{equation}
 \mathcal R
 =-\frac12\sum_{i,j=1}^{k}q_{ij}
  (X_iX_j+X_jX_i).
\label{rockland-form}
\end{equation}
For a Plancherel-regular angular parameter \(\theta\), write
\begin{equation}
 A_\theta=d\pi_\theta(\mathcal R).
\label{Atheta-definition}
\end{equation}
Thus \(A_\theta\) is the operator denoted by \(\mathcal H_\theta\) in the
heat and geodesic formulas below.

\begin{proposition}[Positive Rockland realization]
\label{positive-rockland-realization}
The matrix \((q_{ij})\) is positive definite on the horizontal quotient, and
\(\mathcal R\) is a positive Rockland operator of homogeneous degree two on
\(\mathbb G\).  For every nontrivial irreducible representation
\(\pi_\theta\), the closure of \(A_\theta\) is positive and self-adjoint and
has purely discrete spectrum in \((0,\infty)\).  On every compact
Plancherel-regular angular chart the Rockland Sobolev constants may be chosen
uniformly.
\end{proposition}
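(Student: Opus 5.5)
Our plan has four steps, taken in order: positivity of the Gram matrix, the Rockland property of \(\mathcal R\), spectral properties of \(A_\theta\), and uniformity over compact angular charts.

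\textbf{Step 1: positivity of \((q_{ij})\).} The horizontal forms \(\omega_1,\dots,\omega_k\) are harmonic representatives of the abelianized monodromy \(\Phi_{\rm ab}\colon\pi_1(M)\to\Gamma/[\Gamma,\Gamma]\otimes\mathbb R\cong V_1\). Since \(\Phi\) is surjective, the classes \([\omega_j]\in H^1(M;\mathbb R)\) are linearly independent. If some \(c_j\) satisfy \(\sum c_j\omega_j\) having periods zero on every loop, then the map \(\Phi_{\rm ab}\) composed with the functional \(c\) vanishes. That contradicts surjectivity onto a lattice of full rank in \(V_1\). Harmonic forms with zero cohomology class vanish, so \(\sum c_j\omega_j\neq0\) whenever \(c\ne0\). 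Hence \(q(c,c)=\|\sum c_j\omega_j\|_{L^2}^2>0\), and \((q_{ij})\) is positive definite on \(V_1\cong\mathfrak g/[\mathfrak g,\mathfrak g]\).

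\textbf{Step 2: \(\mathcal R\) is a positive Rockland operator.} Diagonalize \(q\) by a real linear change of basis \(Y_j=\sum_i (q^{1/2})_{ji}X_i\) of the first layer of \(\operatorname{Lie}(\mathbb G)\). This gives \(\mathcal R=-\sum_j Y_j^2\). The \(Y_j\) still span the first stratum, which generates \(\operatorname{Lie}(\mathbb G)\) because \(\mathbb G\) is stratified. So \(\mathcal R\) is a homogeneous sub-Laplacian of degree two. It is formally self-adjoint and nonnegative on \(C_c^\infty\). We then verify the Rockland condition. Let \(\pi\) be nontrivial irreducible and \(v\in\mathcal H^\infty_\pi\) with \(d\pi(\mathcal R)v=0\). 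Pairing with \(v\) gives \(\sum_j\|d\pi(Y_j)v\|^2=0\), so \(d\pi(Y_j)v=0\) for all \(j\). Since the \(Y_j\) generate the Lie algebra, \(d\pi(Z)v=0\) for all \(Z\), and \(v\) is \(G\)-fixed. Irreducibility and nontriviality then force \(v=0\). Alternatively, we simply cite Hörmander and Helffer--Nourrigat~\cite{HelfferNou}.

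\textbf{Step 3: spectral properties of \(A_\theta\).} For nontrivial irreducible \(\pi_\theta\), we invoke the standard theory of positive Rockland operators \cite{FischerRuzhanskyBook,terElstRobinson}. The closure of \(d\pi_\theta(\mathcal R)\) on \(\mathcal H^\infty\) is essentially self-adjoint and nonnegative. Moreover, \((I+A_\theta)^{-1}\) is compact, which follows from the Helffer--Nourrigat maximal hypoelliptic estimates in the Kirillov model. Concretely, \(d\pi_\theta\) realizes \(\operatorname{Lie}(\mathbb G)\) as polynomial differential operators on \(L^2(\mathbb R^m)\), and the Rockland estimate controls \(\|x^\alpha\partial^\beta v\|\) for a coercive family. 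This gives discrete spectrum. The Rockland injectivity of Step 2, applied to eigenvectors (which are smooth vectors by hypoellipticity), excludes the eigenvalue \(0\). So the spectrum lies in \((0,\infty)\).

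\textbf{Step 4: uniformity, the expected main obstacle.} We must show the Rockland Sobolev constants are uniform over a compact Plancherel-regular chart. On such a chart the Kirillov (Pedersen) coordinates give a realization \(d\pi_\theta(X)\) whose coefficient polynomials depend continuously, indeed polynomially or rationally, on \(\theta\) with nonvanishing Pfaffian-type denominators. We would prove the estimate
\[
\|v\|_{\mathcal S^2_\theta}\le C\bigl(\|A_\theta v\|+\|v\|\bigr)
\]
by contradiction plus compactness. Suppose the constants blew up along a sequence \(\theta_n\to\theta_*\). After normalization one would extract a limit violating the Rockland estimate at \(\theta_*\), using continuity of the coefficients and the uniform lower bound of the Pfaffian on the chart. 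Equivalently, we could use the group-level Rockland estimate on \(\mathbb G\), which is \(\theta\)-independent, transferred by the unitary intertwiners of the chart. This is where the regularity of the chart is essential: degeneration at the chart boundary is excluded by compactness.
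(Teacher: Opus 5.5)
Your proposal follows the paper's proof essentially step for step: linear independence of the harmonic horizontal components gives positivity of \((q_{ij})\), a linear change of first-layer basis makes \(\mathcal R\) a positive sum of squares of generators of the stratified algebra, standard positive-Rockland theory gives self-adjointness and discrete positive spectrum of \(A_\theta\), and uniformity on a compact chart reduces to the equivalence of Rockland Sobolev norms; your explicit injectivity check of the Rockland condition and your exclusion of the eigenvalue \(0\) are details the paper leaves to its citations. The one place to be careful is Step~4, where the contradiction-plus-compactness sketch does not work: there is no source of compactness, since after normalization the sequence tends to \(0\) in \(L^2\) and so there is no nonzero limit to contradict. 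Keep your second route instead: transfer the \(\theta\)-independent group estimate on \(\mathbb G\) to Plancherel-almost every \(\pi_\theta\), then extend it to every \(\theta\) in the regular chart by continuity on the common Schwartz core, which is what the paper's appeal to equivalence of Rockland Sobolev norms amounts to.
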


\begin{proof}
The harmonic components corresponding to a basis of
\(\Gamma/[\Gamma,\Gamma]\) are linearly independent, hence their Gram matrix
is positive definite.  After a linear change of the first-layer basis,
\(\mathcal R\) is a positive sum of squares of vector fields which generate
the stratified algebra.  It is therefore a positive Rockland sub-Laplacian.
The essential self-adjointness, the discrete positive spectrum of
\(d\pi_\theta(\mathcal R)\), and the spectral estimates are standard for
positive Rockland operators; see
\cite{FischerRuzhanskyBook,terElstRobinson}.  Uniformity on a compact regular
chart follows by using finitely many orbit-method coordinate patches and the
equivalence of the Rockland Sobolev norms
\cite{FischerRuzhanskySobolev}.
\end{proof}

We measure all subsequent differential orders relative to \(A_\theta\).  A
family \(T_\theta\) has uniform \(A\)-order at most \(a\) if
\begin{equation}
 \sup_{\theta\in\Theta}
 \bigl\|T_\theta(1+A_\theta)^{-a/2}\bigr\|<\infty
\label{A-order-definition}
\end{equation}
on each compact regular chart \(\Theta\).  After the base ground-state
projection and the homogeneous rescaling, the effective operator and the
local amplitude have expansions
\begin{equation}
 \mathscr L_\theta(r)
 =rA_\theta+\sum_{n\geq3}r^{n/2}B_{n,\theta},
 \qquad
 \mathscr C_\theta(r;p,q)
 =\sum_{m\geq0}r^{m/2}C_{m,\theta}(p,q).
\label{effective-rockland-expansion}
\end{equation}

\begin{lemma}[Relative orders of the normal-form coefficients]
\label{relative-order-lemma}
For every fixed \(n\) and \(m\), the family \(B_{n,\theta}\) has uniform
\(A\)-order at most \(n\), and \(C_{m,\theta}(p,q)\) has uniform
\(A\)-order at most \(m\), locally uniformly in \((p,q)\).
\end{lemma}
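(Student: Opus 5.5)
The argument is homogeneity bookkeeping plus one Rockland Sobolev inequality. The degree count is read off from the construction of the coefficients; the inequality converts homogeneous degree into $A$-order uniformly in $\theta$.

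First I would make explicit how $B_{n,\theta}$ and $C_{m,\theta}(p,q)$ arise. Conjugating by the gauge \eqref{sectionpil} produces the analogue of \eqref{Lexpansion}. That analogue is a finite sum of terms of the form (base differential operator of order $\le 2$ or its Green potential) $\times$ (noncommutative polynomial in $d\pi_l(X)$, $X$ in a Malcev basis adapted to the lower central filtration). After the dilation $l=\delta_{\sqrt r}\theta$, a basis element of filtration weight $w$ becomes $r^{w/2}d\pi_\theta(X)$. Hence a monomial carrying the factor $r^{n/2}$ in \eqref{effective-rockland-expansion} has total homogeneous weight at most $n$ in the graded generators. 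Possible lower-weight pieces come from the difference between $\mathfrak g$ and $\operatorname{gr}(\mathfrak g)$; these only lower the weight. The ground-state projection onto constants on $M$ (Schur-complement / Feshbach reduction, with $G_M$ inserted between insertions as in Section~\ref{Higher}) is carried out first. That reduction does not increase the total weight: at each level it sums products of lower insertions whose $r$-powers add exactly as their weights do. This is the same order balance used at $n=2$ to get $A_\theta$ as the $r^1$ term. I would do this by induction on $n$, following the recursion \eqref{aij}--\eqref{computationofaikn} with $\psi_k$ replaced by the spectral calculus of $A_\theta$. Amplitudes $C_{m,\theta}$ come from the Taylor expansion of the gauge factor $\pi_\theta(\delta_{\sqrt r}\,{}_R\!\int_p^q\omega)$ together with the same eigenprojection corrections, giving weight $\le m$ locally uniformly in $(p,q)$. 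The Lie integral and its Chen expansion are smooth in the endpoints, so this uniformity comes for free.

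Second, I would convert weight into $A$-order. For a homogeneous left-invariant differential operator $P$ of degree $a$ on $\mathbb G$ and a positive Rockland operator $\mathcal R$ of degree $2$, the Rockland Sobolev theory gives $\|\pi(P)(1+\pi(\mathcal R))^{-a/2}\|\le C_P$ uniformly over all irreducible $\pi$. This is because $P(I+\mathcal R)^{-a/2}$ is a left-invariant operator of order $0$ bounded on $L^2(\mathbb G)$, and its operator norm dominates $\sup_\pi$ via the Plancherel formula. By Proposition~\ref{positive-rockland-realization}, the constants are uniform on compact regular charts $\Theta$. For non-integer or odd $a$ I would use interpolation, or compose with $(1+A_\theta)^{-1/2}$ factors. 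Multiplying by base-manifold coefficients, which are bounded, and by $G_M$, which is bounded on the zero-mean part, does not change the $L^2(\mathbb R^\bullet)$-side order. Summing the finitely many monomials then gives \eqref{A-order-definition} with $a=n$ or $a=m$.

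The main obstacle is the first step for $n\ge3$. The Feshbach reduction involves resolvent-like insertions, which could in principle spoil the simple count "power of $r$ equals weight". I would control this by treating the reduction formally: expand $(\Delta_M-\lambda)^{-1}$ on the zero-mean subspace as $G_M$ plus terms of positive $r$-power. Each such term carries compensating powers of $r$ multiplied by $A_\theta$-insertions of matching weight. The count is therefore preserved term by term at each fixed order. A secondary subtlety is the passage from $\mathfrak g$ to the graded model; there I would use that the non-graded corrections have strictly lower weight and are absorbed via $(1+A_\theta)^{-a/2}\le(1+A_\theta)^{-a'/2}$ for $a'\le a$.
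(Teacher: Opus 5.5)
Your proposal follows the paper's proof. Both arguments first do a weight count showing that the coefficient of $r^{n/2}$ (resp.\ $r^{m/2}$) is a finite sum of represented homogeneous monomials of graded degree at most $n$ (resp.\ $m$), with smooth base coefficients and Green-operator factors; both then apply the Rockland Sobolev bound on $P(1+\mathcal R)^{-a/2}$ inside the representations. The differences are minor: you spell out the Feshbach/recursion bookkeeping that the paper compresses into one sentence, and you get $\theta$-uniformity from the Plancherel bound $\|\pi(T)\|\le\|T\|_{L^2(\mathbb G)\to L^2(\mathbb G)}$ rather than from uniform equivalence of Rockland Sobolev norms on compact charts.
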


\begin{proof}
The coefficient of \(r^{n/2}\) is a finite sum of represented homogeneous
monomials of graded degree at most \(n\), with smooth base coefficients and
Green-operator factors of nonpositive order.  The corresponding statement for
the amplitude is identical.  A homogeneous differential operator of degree
\(a\) maps the Rockland Sobolev scale with loss \(a\); equivalently,
\(T(1+\mathcal R)^{-a/2}\) is bounded.  Applying the representation and using
the uniform equivalence of Rockland Sobolev norms gives
\eqref{A-order-definition}; see
\cite{FischerRuzhanskySobolev,FischerRuzhanskyBook}.
\end{proof}

\begin{theorem}[All--order weighted summability of the regular model]
\label{all-order-model-summability}
Fix a compact Plancherel-regular angular chart \(\Theta\).  For every
\(N\geq0\), the trace-level coefficient \(T_{N,\theta}(p,q)\) obtained by
expanding
\[
 \mathscr C_\theta(r;p,q)e^{-t\mathscr L_\theta(r)}
\]
through relative order \(t^{-N/2}\) and then performing the radial integral
with density \(r^{Q/2-1}\,dr\) is trace class.  Locally uniformly in
\((p,q)\),
\begin{equation}
 \|T_{N,\theta}(p,q)\|_{\mathcal S_1}
 \leq C_N(p,q)\operatorname{Tr}(1+A_\theta)^{-Q/2},
\label{weighted-summability-bound}
\end{equation}
and
\begin{equation}
 \int_\Theta J(\theta)
 \|T_{N,\theta}(p,q)\|_{\mathcal S_1}\,d\theta<\infty.
\label{angular-trace-integrability}
\end{equation}
Consequently, every coefficient-weighted spectral sum and angular integral
which occurs at the fixed order \(N\) in the heat and Selberg expansions is
absolutely convergent.  The same conclusion holds for the rapidly decreasing
spectral functions used in the smoothed Selberg trace formula.
\end{theorem}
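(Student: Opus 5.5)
The plan is to make the order balance explicit by Duhamel (Volterra) expansion at trace level and then perform the radial integral term by term. Write \(\mathscr L_\theta(r)=rA_\theta+W_\theta(r)\) with \(W_\theta(r)=\sum_{n\geq3}r^{n/2}B_{n,\theta}\), and expand
\[
 e^{-t\mathscr L_\theta(r)}
 =\sum_{k\geq0}(-t)^k\int_{\Delta_k}
 e^{-s_0trA_\theta}W_\theta(r)e^{-s_1trA_\theta}\cdots W_\theta(r)e^{-s_ktrA_\theta}\,ds .
\]
Multiplying by \(\mathscr C_\theta(r;p,q)\) and collecting the terms of relative order \(t^{-N/2}\) leaves finitely many words. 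Each word has the form
\[
 t^k r^{(m+n_1+\cdots+n_k)/2}\,C_{m,\theta}\,e^{-s_0trA}B_{n_1,\theta}\cdots B_{n_k,\theta}e^{-s_ktrA},
\]
where \(\sum_i(n_i-2)+m=N\). This is the bookkeeping that produces \(t^{-N/2}\) after the substitution \(u=tr\).

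Performing \(\int_0^\infty r^{Q/2-1}\,dr\) and then setting \(u=tr\) gives the factor
\[
 t^{-Q/2}\,t^{-(m+\sum n_i-2k)/2}=t^{-Q/2-N/2}.
\]
The remaining operator is
\[
 \int_{\Delta_k}\int_0^\infty u^{Q/2-1+k+(m+\sum n_i)/2-k}\,C\,e^{-s_0uA}B_{n_1}\cdots e^{-s_kuA}\,du\,ds .
\]
To bound it, insert \((1+A_\theta)^{\pm a/2}\) between consecutive factors. By Lemma~\ref{relative-order-lemma}, each \(B_{n_i}\) and \(C_m\) contributes a bounded operator times \((1+A)^{n_i/2}\), respectively \((1+A)^{m/2}\). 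Use the spectral bound \(\|(uA)^{a}e^{-suA}\|\leq C_a s^{-a}\) to absorb these positive powers. One has to be careful that the powers of \(A\) do not commute past the \(B\)'s; I would handle this by redistributing them with the Rockland Sobolev interpolation, which is uniform on \(\Theta\) by Proposition~\ref{positive-rockland-realization}.

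The Mellin exponent then exactly matches the absorbed power. What remains is of the form
\[
 \int_0^\infty u^{Q/2-1}e^{-cuA_\theta}\,du=\Gamma(Q/2)(cA_\theta)^{-Q/2},
\]
composed with bounded operators, so the word has Rockland order \(-Q\). This is the Plancherel--Mellin step.

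Trace class then follows from Hölder's inequality for Schatten norms. Specifically, \(\|X(1+A)^{-Q/2}\|_{\mathcal S_1}\leq\|X\|\,\operatorname{Tr}(1+A_\theta)^{-Q/2}\), which gives \eqref{weighted-summability-bound}. The powers \(s_j^{-a_j}\) are integrable on the simplex as long as each exponent is less than \(1\). To ensure this, I would split every insertion of order \(n_i\) symmetrically between its two neighbouring heat factors, and use the fact that \(e^{-s_0uA}\) together with the end factor can carry the large powers. This step, distributing the Rockland orders so that the simplex integrals converge, is the main obstacle. If it fails, I would fall back on resolvent (Cauchy) representations in place of the Duhamel simplex.

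For \eqref{angular-trace-integrability}, I would note that \(\int_\Theta J(\theta)\operatorname{Tr}(1+A_\theta)^{-Q/2}\,d\theta\) is finite. By the Plancherel formula with the density of Proposition~\ref{model-density}, this integral is a radial Mellin transform of the on-diagonal heat kernel of \(\mathcal R\) on \(\mathbb G\), namely
\[
 \int_0^\infty\! s^{Q/2-1}e^{-s}\,h_s(0)\,ds
 \quad\text{with}\quad h_s(0)=s^{-Q/2}h_1(0),
\]
which converges after localizing to the compact chart. For the Selberg test functions, I would replace \(e^{-tx}\) by rapidly decreasing functions and apply the same argument via the Helffer--Sjöstrand (Cauchy) calculus.
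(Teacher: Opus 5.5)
Your architecture matches the paper's: Duhamel--Volterra words, $u=tr$, the count $\beta=\frac12(m+\sum_j n_j)$, H\"older against $(1+A_\theta)^{-Q/2}$, and Plancherel--Mellin. But the step you call the main obstacle is left open, and the resolution you sketch cannot work. The Dirichlet integral $\int_{\Delta_k}\prod_j s_j^{-a_j}\,ds$ is finite only if every $a_j<1$. Yet a single quartic insertion ($k=1$, $n_1=4$, $N=2$) already forces the two heat factors to absorb total weight $\beta=2$. Loading the end factors does not help, because $s_0$ and $s_k$ also vanish on faces of $\Delta_k$.

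The missing idea is the paper's partition of the simplex. On each piece $\{s_j\geq 1/(k+1)\}$, put the whole Rockland weight on the single factor $e^{-s_juA_\theta}$. Commute powers of $1+A_\theta$ through the other heat factors and through the insertions, and use cyclicity of the trace to place the trace-class factor there. Moving weight through $B_{n,\theta}$ requires $\sup_\theta\|(1+A_\theta)^{\gamma}B_{n,\theta}(1+A_\theta)^{-\gamma-n/2}\|<\infty$ for all real $\gamma$. This holds for represented homogeneous differential operators, by the Sobolev-scale mapping invoked in the proof of Lemma~\ref{relative-order-lemma}. It does not follow by interpolating the one-sided bound stated in that lemma.

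Separately, the operators left after absorption, such as $(uA_\theta)^{a}e^{-s_juA_\theta}$, depend on $u$ and $s$. So you cannot pull out $\int_0^\infty u^{Q/2-1}e^{-cuA_\theta}\,du=\Gamma(Q/2)(cA_\theta)^{-Q/2}$ with bounded operators around it. The semigroup bound on the $u$-integrand at the critical weight $(1+A_\theta)^{Q/2}$ is only $O(u^{-1})$ as $u\to0$, which is not integrable. The endpoint order $-Q$ must come from exact functional calculus, or from the Rockland Volterra calculus the paper cites. For instance, $v_j=s_ju$ turns a word into $\int_{\mathbb R_+^{k+1}}(v_0+\cdots+v_k)^{Q/2+\beta-k-1}C_{m,\theta}e^{-v_0A_\theta}B_{n_1,\theta}\cdots B_{n_k,\theta}e^{-v_kA_\theta}\,dv$. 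For an integral exponent this is a finite sum of products $C_{m,\theta}A_\theta^{-p_0}B_{n_1,\theta}\cdots B_{n_k,\theta}A_\theta^{-p_k}$ with $p_j\geq1$ and $\sum_jp_j=Q/2+\beta$. Each has order exactly $-Q$ by the two-sided bounds.

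Your argument for \eqref{angular-trace-integrability} is wrong as written. With $h_s(0)=s^{-Q/2}h_1(0)$, your integral is $h_1(0)\int_0^\infty s^{-1}e^{-s}\,ds=\infty$. Restricting to the compact chart cannot help, since the divergence at $s\to0$ is forced by homogeneity. The slip is identifying $F(s)=\int_\Theta J(\theta)\operatorname{Tr}e^{-sA_\theta}\,d\theta$ with $h_s(0)$; the Plancherel formula also integrates over the radial variable. The correct statement is the paper's \eqref{rockland-plancherel-mellin}. By Tonelli, $\int_0^\infty r^{Q/2-1}F(r)\,dr=\Gamma(Q/2)\int_\Theta J(\theta)\operatorname{Tr}A_\theta^{-Q/2}\,d\theta\leq h_1^{\mathcal R}(e)<\infty$. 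Since $A_\theta>0$, one has $(1+A_\theta)^{-Q/2}\leq A_\theta^{-Q/2}$, which gives integrability of $J(\theta)\operatorname{Tr}(1+A_\theta)^{-Q/2}$ directly, with no damping factor and no extra localization.
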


\begin{proof}
Let \(h_t^{\mathcal R}\) be the heat kernel of \(\mathcal R\) on
\(\mathbb G\).  Positive Rockland heat kernels are Schwartz and satisfy the
homogeneous scaling law~\cite{FischerRuzhanskyBook}.  With the normalization
absorbed in \(J\), the
Plancherel formula and the polar decomposition
\(l=\delta_{\sqrt r}\theta\) give
\begin{align}
 h_1^{\mathcal R}(e)
 &=\int_\Theta J(\theta)\int_0^\infty
   r^{Q/2-1}\operatorname{Tr}(e^{-rA_\theta})\,dr\,d\theta
 \notag\\
 &=\Gamma(Q/2)\int_\Theta J(\theta)
   \operatorname{Tr}(A_\theta^{-Q/2})\,d\theta<\infty.
\label{rockland-plancherel-mellin}
\end{align}
Tonelli's theorem applies because the integrand is nonnegative.  In
particular, \((1+A_\theta)^{-Q/2}\) is trace class for almost every
\(\theta\), and its trace norm is integrable against \(J(\theta)d\theta\).

It remains to show that every higher coefficient is bounded by the same
majorant.  A typical term in the Volterra--Duhamel expansion contains an
amplitude \(C_{m,\theta}\) and insertions
\(B_{n_1,\theta},\ldots,B_{n_k,\theta}\).  Put
\[
 \beta=\frac12\left(m+\sum_{j=1}^k n_j\right),
 \qquad
 N=m+\sum_{j=1}^k(n_j-2).
\]
After the change of variables \(u=tr\), its coefficient is, up to a scalar,
a finite sum of operators of the form
\begin{equation}
 \int_0^\infty u^{Q/2-1+\beta}
 \int_{\Delta_k}
 C_{m,\theta}e^{-s_0uA_\theta}B_{n_1,\theta}
 e^{-s_1uA_\theta}\cdots
 B_{n_k,\theta}e^{-s_kuA_\theta}\,ds\,du,
\label{volterra-mellin-term}
\end{equation}
where \(s_0+\cdots+s_k=1\).  By
Lemma~\ref{relative-order-lemma}, the total positive \(A\)-order of the
insertions is at most \(2\beta\).  The standard semigroup estimate
\[
 \|(1+A_\theta)^{a/2}e^{-uA_\theta}\|
 \leq C_a\min(1,u^{-a/2})
\]
and the simplex Volterra estimate therefore show that
\begin{equation}
 \bigl\|T_{N,\theta}(p,q)(1+A_\theta)^{Q/2}\bigr\|
 \leq C_N(p,q).
\label{order-balance-bound}
\end{equation}
Indeed, the Mellin power in \eqref{volterra-mellin-term} lowers the
\(A\)-order by \(Q+2\beta\), while the insertions raise it by at most
\(2\beta\); the net order is \(-Q\).  This is the required order balance.
The usual partition of the simplex near its faces, together with cyclicity of
the trace, gives the same estimate when some \(s_j\) tends to zero.  This is
also a direct consequence of the Volterra calculus for positive Rockland
operators described in \cite{FischerRuzhanskyBook}.

Factoring
\[
 T_{N,\theta}
 =\bigl[T_{N,\theta}(1+A_\theta)^{Q/2}\bigr]
  (1+A_\theta)^{-Q/2}
\]
gives \eqref{weighted-summability-bound}; integrating and using
\eqref{rockland-plancherel-mellin} gives
\eqref{angular-trace-integrability}.  If
\(\{\psi_i(\theta)\}\) is an eigenbasis of \(A_\theta\), then
\[
 \sum_i\bigl|
 \langle T_{N,\theta}\psi_i(\theta),\psi_i(\theta)\rangle
 \bigr|
 \leq\|T_{N,\theta}\|_{\mathcal S_1},
\]
which proves absolute convergence of the coefficient-weighted sums.  A
rapidly decreasing Selberg spectral function is treated by the same Volterra
argument, or equivalently by its almost-analytic/Cauchy functional calculus.
\end{proof}

\begin{remark}
The phrase ``all orders'' means that the argument applies after truncation at
any prescribed finite \(N\).  No convergence of the infinite asymptotic series
as \(N\to\infty\) is asserted.  Individual eigenvalue formulas may be used on
simple branches, but the trace-level proof above remains valid at crossings
and multiplicities.
\end{remark}

\subsection{Lowest eigenvalue expansion}

Let \(L_\hbar\) denote the conjugated twisted Laplacian obtained from
\(\Delta_{\pi_{\hbar l}}\) by the section~\eqref{sectionpil}.  Its
expansion has the form
\begin{equation}
L_\hbar
=
L^{(0)}
+
\hbar^{1/2}L^{(1)}
+
\hbar L^{(2)}
+
\hbar^{3/2}L^{(3)}
+\cdots ,
\label{nilpotent-L-expansion}
\end{equation}
where
\[
L^{(0)}=\Delta_M.
\]
The cluster over the zero eigenvalue of \(\Delta_M\) is governed at first
nonzero order by the average of \(L^{(2)}\), namely \(A_\theta\).  The general
proof is organized at trace level by the Volterra expansion in
Theorem~\ref{all-order-model-summability}; hence it does not require a global
choice of individual eigenvalue branches.
Proposition~\ref{rational-spectral-transfer} then transfers each fixed-order
spectral observable to the exact rational finite-dimensional fibers.

Let
\[
 A_\theta\psi_i(\theta)=\mu_i(\theta)\psi_i(\theta)
\]
be the model eigenvalue equation.  On a simple branch the trace-level
expansion may be written in the familiar form
\begin{equation}
\lambda_i(\hbar,\theta)
=
\frac{\mu_i(\theta)}{\mathrm{vol}(M)}\,\hbar
+
\lambda_i^{(3)}(\theta)\hbar^{3/2}
+
\lambda_i^{(4)}(\theta)\hbar^2
+\cdots .
\label{nilpotent-lambda-expansion}
\end{equation}
Under the same symmetry assumptions as in the Heisenberg case, the odd
coefficients vanish.  At a multiple eigenvalue, the displayed branchwise
notation is replaced by the corresponding finite spectral projection; the
sum of the coefficients is exactly the Volterra trace coefficient of
Theorem~\ref{all-order-model-summability}.

Thus the Heisenberg formula
\[
\frac{2\pi\,\mathrm{vol}(\widehat H)}{\mathrm{vol}(M)}
\left(i+\frac12\right)h
\]
is replaced by the general expression
\[
\frac{\mu_i(\theta)}{\mathrm{vol}(M)}\,\hbar,
\]
where \(\mu_i(\theta)\) is the \(i\)-th eigenvalue of the canonical
positive Rockland model \(A_\theta\).

\subsection{Concrete models and the role of examples}

The Heisenberg group is the only case in which the model operator is
diagonalized explicitly in the main text.  This choice is deliberate:
the Heisenberg case already exhibits the new phenomena caused by
nilpotent noncommutativity, while retaining the solvability of the
harmonic oscillator.

For higher-step nilpotent groups, the same construction produces
hypoelliptic operators with polynomial coefficients.  These operators
are explicit, but they generally do not admit closed-form spectra.
Their representation-theoretic construction and concrete formulas are
therefore better treated as examples and computational models rather
than as closed-form evaluations.

In this paper the Engel group serves as the first example beyond the
Heisenberg case.  The corresponding model operator no longer reduces
to a one-dimensional harmonic oscillator, but its resolvent and
spectral zeta function can still be analyzed by the methods developed
in Appendix~A.

\subsection{Multidimensional oscillator representation}
\label{multiharmonic}

For a general nilpotent group, the representation space of \(\pi_l\)
can be realized as
\[
L^2(\mathbb R^m)
\]
for a suitable integer \(m\).  In this realization, the operators
\(d\pi_l(X_j)\) are polynomial differential operators in variables
\[
s_1,\ldots,s_m
\]
and their derivatives.  Hence the model operator \(\mathcal H^G\) and
the higher perturbation terms in~\eqref{nilpotent-L-expansion} are
finite sums of monomials in these variables and derivatives.

Choose a reference multidimensional harmonic oscillator on
\(L^2(\mathbb R^m)\), and let
\[
\{\Psi_\alpha\}_{\alpha\in\mathbb N^m}
\]
be its Hermite basis.  In terms of the associated creation and annihilation
operators, each polynomial differential operator has finite propagation in the
multi-index: for a fixed polynomial differential operator \(P\),
\[
\langle P\Psi_\alpha,\Psi_\beta\rangle=0
\quad\text{unless}\quad
|\alpha-\beta|_1\le N_P,
\]
and
\[
|\langle P\Psi_\alpha,\Psi_\beta\rangle|
\le C_P(1+|\alpha|+|\beta|)^{m_P}.
\]
Thus the matrices of \(\mathcal H^G\) and of the perturbation terms are
block-banded with polynomial growth in the oscillator grading.  This is the
multidimensional analogue of the creation--annihilation bookkeeping in the
Heisenberg case.  Bandedness provides an effective computational scheme; the
analytic summability of the resulting coefficients follows instead from the
Rockland relative-order estimate and
Theorem~\ref{all-order-model-summability}.

This description gives an effective algorithm for the higher coefficients.  One expands
\[
f_{i,\hbar}
=
\sum_{n=0}^{\infty}
f_i^{(n)}\hbar^{n/2},
\qquad
f_i^{(n)}
=
\sum_{\alpha}
a_{i,\alpha}^{(n)}(x)\Psi_\alpha,
\]
and
\[
\lambda_i(\hbar)
=
\sum_{n=0}^{\infty}
\lambda_i^{(n)}\hbar^{n/2}.
\]
At each order the coefficient equation is represented by an infinite banded
matrix in the oscillator index, together with elliptic equations on \(M\) for
the coefficient functions.  Finite truncations give an effective numerical
scheme.  They are not used to justify the exact trace: fixed-order remainders
are transferred by Proposition~\ref{rational-spectral-transfer}, while the
full coefficient sum is controlled at trace level by
Theorem~\ref{all-order-model-summability}.

Thus, although the eigenvalues of \(\mathcal H^G\) need not be
available in closed form, the perturbation expansion remains algorithmic.
The Heisenberg case gives explicit eigenvalues; the Engel case gives the
quartic oscillator; in general the constants are intrinsic spectral invariants
of the canonical hypoelliptic normal form.

\subsection*{Remarks on appendices and references}
\addcontentsline{toc}{subsection}{Remarks on appendices and references}

The representation-theoretic background, including the orbit-method notation,
the Heisenberg steps, the finite-dimensional rational refinement, and the
generalized Pytlik finite-dimensional trace identity, is contained in the
companion foundation paper.  The appendices below collect analytic material
used in the present paper.

Appendix~A contains the resolvent-based spectral analysis needed for
the Engel example and for the spectral zeta functions of hypoelliptic model
operators.  Appendix~B develops the harmonic theory of Chen's iterated
integrals and homology connections, which gives a more invariant framework for
the Lie-integral constructions used above.  Appendix~C discusses the Harper
operator as a model illustration of exact finite-dimensionalization.

The material in Appendix~B follows the approach of K.~Kohno~\cite{Kohno}, based on Chen's
theory of iterated integrals~\cite{Chen,Chen1} and on subsequent refinements by Sullivan~\cite{Sullivan}
and others.

\section{Long-time asymptotics of heat kernels for nilpotent extensions}
\label{Longtimeheat}

In this section we derive the heat-kernel formulas.  We first treat the
discrete Heisenberg group \({\rm Heis}_3(\mathbb Z)\), where the model operator
is the modified harmonic oscillator computed in Section~\ref{sectionasym}, and
then pass to general finitely generated torsion-free nilpotent groups.

Fourier inversion, the disappearance of bounded-height fibers, and the model
density on continuous observables are exact.
Proposition~\ref{rational-spectral-transfer} provides the required uniform
rational spectral transfer, and
Theorem~\ref{all-order-model-summability} provides the
coefficient-weighted summability at every fixed order.

\subsection
[Leading terms: Heisenberg group]
{Leading terms of the long-time heat-kernel expansion
for \({\rm Heis}_3(\mathbb Z)\)}
\label{leadingHeisenberg}

We begin with the proof of Theorem~\ref{heisenberg-heat}.  Let
\[
\Gamma={\rm Heis}_3(\mathbb Z).
\]
For a finite-dimensional representation \(\rho\), write
\[
{\rm tr}_{\rho}:=\frac{1}{\dim\rho}{\rm Tr}
\]
for the normalized matrix trace.  By the finite-dimensional Fourier inversion
formula of the companion paper, the heat kernel on the covering space \(X\)
can be written as
\begin{equation}
k_X(t,p,q)
=
\int_{\widehat X}
{\rm tr}_{\rho_{{\rm fin},x}}\!\left(
k_{\rho_{{\rm fin},x}}(t,p,q)
\right)
\,d\mu(x),
\label{Heatasymptoticsmain1}
\end{equation}
where
\[
\widehat X
=
\widehat{\Gamma}_{\rm fin}
=
(\mathbb Q\cap[0,1))\times[0,1)\times[0,1),
\]
and
\begin{equation}
k_{\rho_{{\rm fin},x}}(t,p,q)
=
\sum_{\gamma\in\Gamma}
\rho_{{\rm fin},x}(\gamma^{-1})
k_X(t,p,\gamma q).
\label{Heatasymptoticsmain2}
\end{equation}
Here \(d\mu\) is Pytlik's finitely additive Plancherel functional~\cite{Pytlik} on the
finite-dimensional dual.  With the convention of the companion foundation paper,
this functional is first understood geometrically on the rational
finite-dimensional skeleton: regular boxes, or more generally Jordan sets, in
the ambient Bloch torus are intersected with the rational skeleton and are
assigned the corresponding Lebesgue content.  A finite-quotient ultralimit supplies another positive exact trace
functional with the same coefficient identities on the algebra used for
Fourier inversion.  Literal equality of the resulting finitely additive set
functions, uniqueness, and independence of the residual chain or ultrafilter
are not asserted.  Thus the integral notation in \eqref{Heatasymptoticsmain1} denotes a
finite-dimensional normalized trace functional, not a Hilbert direct-integral
decomposition of the regular representation.

Sunada's inequality gives a pointwise lower bound for the bottom of each
twisted spectrum away from the trivial representation.  On the complement of
a fixed small neighborhood, the dimension-normalized heat observable is
therefore bounded uniformly by \(Ce^{-c_\varepsilon t}\).  Positivity and
\(\|\Lambda_\Gamma\|=1\) give, for a sufficiently small neighborhood
\(U_\varepsilon\),
\begin{equation}
k_X(t,p,q)
=
\Lambda_\Gamma\!\left(
\mathbf 1_{U_\varepsilon}(x)
{\rm tr}_{\rho_{{\rm fin},x}}\,k_{\rho_{{\rm fin},x}}(t,p,q)
\right)
+O(e^{-c_\varepsilon t}),
\label{reductionbysunada}
\end{equation}
where \(\Lambda_\Gamma\) denotes the normalized finitely additive trace
functional.  In Heisenberg coordinates the localized rational skeleton has
small central parameter and the corresponding Bloch phases.

Proposition~\ref{rational-spectral-transfer} identifies the localized
finite-fiber observables, through the required order and with a uniform
remainder, with the smooth Schr\"odinger normal form.
Proposition~\ref{model-density} then evaluates the resulting continuous model
observable with \(d\nu(h)=C_{\rm Pl}|h|\,dh\); below the fixed scalar
\(C_{\rm Pl}\) is absorbed into the chosen normalization.  The following
integral is a model integral, not a Hilbert direct integral over the non-type-I
dual; the exact lattice trace remains the rational finite-dimensional one.

Let \(\lambda_{k,i}(h)\) be the eigenvalues of the twisted operator
\(L_h\) introduced in Section~\ref{Reductiontwisted}, and let
\(\varphi_{k,i,h}\) be the corresponding eigenfunctions.  Then,
formally,
\begin{align}
\int_{-\varepsilon}^{\varepsilon}
k_{\rho_h}(t,p,q)\,d\nu(h)
&=
\int_{-\varepsilon}^{\varepsilon}
\sum_{k=0}^{\infty}
\sum_{i=0}^{\infty}
{\rm Tr}\!\left(
e^{-\lambda_{k,i}(h)t}
\varphi_{k,i,h}(p)\varphi_{k,i,h}^{*}(q)
\right)
\,d\nu(h)
\notag\\
&=
\int_{-\varepsilon}^{\varepsilon}
\sum_{i=0}^{\infty}
{\rm Tr}\!\left(
e^{-\lambda_{0,i}(h)t}
\varphi_{0,i,h}(p)\varphi_{0,i,h}^{*}(q)
\right)
\,d\nu(h)
+
O(e^{-ct}),
\label{HeatSch1}
\end{align}
where \(d\nu(h)=|h|\,dh\) is the ordinary model density from
Proposition~\ref{model-density}.  Sunada's estimate gives the complementary
spectral gap, and Proposition~\ref{rational-spectral-transfer} controls the
finite-fiber comparison error uniformly on the rational support.

Using
\[
\varphi_{0,i,h}=S_h f_{0,i,h},
\]
we factor the integrand as
\begin{equation}
\sum_{i=0}^{\infty}
e^{-\lambda_{0,i}(h)t}
\varphi_{0,i,h}(p)\varphi_{0,i,h}^{*}(q)
=
S_h(p,q)
\left(
\sum_{i=0}^{\infty}
e^{-\lambda_{0,i}(h)t}
f_{0,i,h}(p)f_{0,i,h}^{*}(q)
\right),
\label{Sh}
\end{equation}
where
\[
S_h(p,q)
=
\rho_h\!\left({}_R\!\int_q^p \omega\right).
\]
The factor \(S_h(p,q)\) is representation-theoretic, while the
remaining factor contains the perturbation expansion of the lowest
twisted eigenvalues and eigenfunctions.

The expansion is obtained by combining the three contributions:
\begin{enumerate}
\item the Taylor expansion of \(S_h(p,q)\) in powers of \(\sqrt h\);
\item the perturbation expansion of \(\lambda_{0,i}(h)\) and
      \(f_{0,i,h}\);
\item the Plancherel weight in the parameter \(h\).
\end{enumerate}
The leading term is governed by the harmonic oscillator
\[
\mathcal H
=
-\|\omega_1\|_{L^2(M)}^2
\left(\frac{d}{ds}\right)^2
+
4\pi^2\|\omega_2\|_{L^2(M)}^2s^2
\]
from~\eqref{HC}.  Since
\[
\lambda_i(h)
=
\frac{\mu_i}{\mathrm{vol}(M)}h
+
O(h^2),
\]
after the change of variables \(r=ht\), the positive and negative central
parameters give conjugate contributions.  Pairing them yields the leading
contribution
\[
2t^{-2}
\sum_{i=0}^{\infty}
\left(\frac{\mu_i}{\mathrm{vol}(M)}\right)^{-2}
=
2\,\mathrm{vol}(M)^2\zeta_{\mathcal H}(2)\,t^{-2}.
\]
The factor \(2\) is the contribution of the two signs of the nonzero central
parameter.
The higher-order terms are obtained from the recursive expansion of
Section~\ref{Higher}.

\subsection{The explicit second heat coefficient in the Heisenberg case}
\label{subsec:heis-second-heat-coeff}

We next record the first correction term in the Heisenberg heat-kernel
expansion.  The reason for spelling this out is that, in the Heisenberg case,
the constants are not merely computable in principle: the first nontrivial
coefficient can be written directly in terms of the fourth eigenvalue
coefficient \(\lambda_i^{(4)}\), the Green operator on \(M\), and the Lie
integrals joining the two lifted points.

For each oscillator index \(i\), the Morse lemma applied to the lowest branch
can be written in the form
\begin{align}
\lambda_{0,i}(h)
&=\lambda_i^{(2)}h+\lambda_i^{(4)}h^2+O(h^3)
=\lambda_i^{(2)}u_i^2,
\label{heis-morse-second-lambda}\\
h^{1/2}&=u_i\left(1-
\frac{\lambda_i^{(4)}}{2\lambda_i^{(2)}}u_i^2+O(u_i^4)\right).
\label{heis-morse-second-h}
\end{align}
Consequently, on the positive central branch the Plancherel factor is
\begin{equation}
h\,dh
=
2u_i^3\left(1-
3\frac{\lambda_i^{(4)}}{\lambda_i^{(2)}}u_i^2+O(u_i^4)\right)\,du_i .
\label{heis-second-density}
\end{equation}
The negative branch is its complex conjugate and is paired with it below.

Put
\[
I_1(p,q)=\int_q^p\omega_1,
\qquad
I_2(p,q)=\int_q^p\omega_2,
\qquad
I_{12}(p,q)=\int_q^p(\omega_{12}+\omega_2\omega_1).
\]
Expanding the transport factor gives
\begingroup\small
\begin{align}
\rho_h\left({}_R\!\int_q^p\omega\right)
&=
1+
\bigg(
2\pi\iu I_2(p,q)s
+I_1(p,q)\frac{d}{ds}
\bigg)u_i
\notag\\
&\quad+
\bigg(
2\pi\iu I_1(p,q)I_2(p,q)s\frac{d}{ds}
-2\pi^2I_2(p,q)^2s^2
\notag\\[-2pt]
&\hspace{6em}
+\frac12 I_1(p,q)^2\frac{d^2}{ds^2}
+2\pi\iu I_{12}(p,q)
\bigg)u_i^2
+O(u_i^3).
\label{heis-second-transport-expansion}
\end{align}
\endgroup
Let \(B_i^{\rm loc}(p,q)\) denote the coefficient of \(u_i^2\) in the real
part of the diagonal matrix coefficient obtained by multiplying the transport
expansion \eqref{heis-second-transport-expansion} with the eigenfunction
expansion involving \(f_i^{(2)}\) in \eqref{fi2}; equivalently,
\begin{align}
&\operatorname{Re}\int_{\mathbb R}
\rho_h\left({}_R\!\int_q^p\omega\right)
\left(\psi_i(s)+f_i^{(2)}(p,s)u_i^2+O(u_i^3)\right)
\overline{\left(\psi_i(s)+f_i^{(2)}(q,s)u_i^2+O(u_i^3)\right)}\,ds
\notag\\
&\qquad=
1+B_i^{\rm loc}(p,q)u_i^2+O(u_i^3).
\label{heis-second-fiber-factor}
\end{align}
The coefficient \(B_i^{\rm loc}(p,q)\) is explicitly computable from the Lie
integrals \(I_1,I_2,I_{12}\) and the Green-operator coefficients
\(a_{ik}^{(2)}=G_MR_{ik}^{(2)}\).  Together with \eqref{heis-second-density}, this gives
\begin{equation}
K_i(p,q)
=
2B_i^{\rm loc}(p,q)
-
6\frac{\lambda_i^{(4)}}{\lambda_i^{(2)}}.
\label{heis-Ki-second}
\end{equation}
Thus, after the change of variables \(w=\sqrt{\lambda_i^{(2)}t}\,u_i\)
and pairing the two central signs,
\begingroup\small
\begin{align}
k_X(t,p,q)
&\sim
2\operatorname{Re}\sum_{i=0}^{\infty}
\int_0^{\varepsilon\sqrt{\lambda_i^{(2)}t}}
 e^{-w^2}
\left(2+K_i(p,q)\frac{w^2}{\lambda_i^{(2)}t}+O(t^{-2})\right)
\frac{w^3}{(\lambda_i^{(2)}t)^2}\,dw .
\label{heis-heat-second-integral}
\end{align}
\endgroup
Using
\[
\int_0^\infty e^{-w^2}w^3\,dw=\frac12,
\qquad
\int_0^\infty e^{-w^2}w^5\,dw=1,
\]
we obtain the following explicit two-term form.

\begin{theorem}[Leading and second heat coefficients for \({\rm Heis}_3(\mathbb Z)\)]
\label{leadingsecond}
With the normalizations of this section, the exact rational transfer and the
explicit oscillator bounds give
\begin{equation}
k_X(t,p,q)
\sim
2\sum_{i=0}^{\infty}\frac{1}{(\lambda_i^{(2)})^2}\,t^{-2}
+
2\sum_{i=0}^{\infty}\frac{K_i(p,q)}{(\lambda_i^{(2)})^3}\,t^{-3}
+O(t^{-4}).
\label{heatheisenberg-two-term}
\end{equation}
Equivalently,
\begin{equation}
k_X(t,p,q)
\sim
C_0\,t^{-2}\left(1+\frac{c_1(p,q)}{t}+O(t^{-2})\right),
\label{heatheisenberg-normalized-two-term}
\end{equation}
where
\begin{equation}
C_0=2\sum_{i=0}^{\infty}\frac{1}{(\lambda_i^{(2)})^2}
=
\frac{\operatorname{vol}(M)^2}
{16\|\omega_1\|_{L^2(M)}^2\|\omega_2\|_{L^2(M)}^2},
\label{heis-C0-second}
\end{equation}
and
\begin{equation}
c_1(p,q)
=
\frac{2}{C_0}
\sum_{i=0}^{\infty}\frac{K_i(p,q)}{(\lambda_i^{(2)})^3}.
\label{heis-c1-second}
\end{equation}
Changing the positive heat generator from \(\Delta\) to \(c\Delta\)
rescales this homogeneous leading coefficient by \(c^{-2}\).  Formula
\eqref{heis-c1-second}, together with
\eqref{heis-Ki-second} and the recursive expression \eqref{lambda4-matrix} for
\(\lambda_i^{(4)}\), is the explicit finite-algorithmic expression for the
second coefficient.
\end{theorem}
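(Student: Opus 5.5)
The plan is to read off the two-term expansion from the localized model integral \eqref{heis-heat-second-integral}, after justifying each reduction that leads to it.

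\emph{Step 1: reduce to the model integral.} By \eqref{reductionbysunada}, the lattice trace reduces, up to $O(e^{-c_\varepsilon t})$, to the localized rational functional. Proposition~\ref{rational-spectral-transfer}, applied at order $N=2$, replaces the finite-fiber observable by the Schr\"odinger normal-form observable with no error in $\Lambda_\Gamma$, by \eqref{rational-transfer-functional}. Proposition~\ref{model-density} then converts the functional into the ordinary integral against $|h|\,dh$, with $C_{\rm Pl}$ absorbed into the normalization. Sunada's theorem discards the clusters $k\ge1$ as in \eqref{HeatSch1}. This leaves the lowest cluster, written as the transport factor \eqref{Sh} times the eigenfunction sum.

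\emph{Step 2: expand each branch.} For each $i$ I apply the Morse substitution \eqref{heis-morse-second-lambda}--\eqref{heis-morse-second-h}. This uses $\lambda_i^{(2)}>0$ from \eqref{harmeigen} and the vanishing of the odd coefficients \eqref{vanishodd}. It turns the density into \eqref{heis-second-density}, and the fiber factor into $1+B_i^{\rm loc}u_i^2+O(u_i^3)$ via \eqref{heis-second-transport-expansion} and \eqref{fi2}. The cubic term is odd under $u_i\mapsto -u_i$, i.e.\ under the parity of the oscillator normal form, and is removed by pairing the two central signs and taking the real part. The product of the two expansions gives the integrand $2+K_i w^2/(\lambda_i^{(2)}t)$ with $K_i$ as in \eqref{heis-Ki-second}.

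\emph{Step 3: evaluate the Gaussian moments.} Rescaling $w=\sqrt{\lambda_i^{(2)}t}\,u_i$ and extending the upper limit to $\infty$ costs only $O(e^{-c\varepsilon^2\lambda_i^{(2)}t})$. Since $\lambda_i^{(2)}\ge\lambda_0^{(2)}>0$, this error is uniform in $i$. The moments $\int e^{-w^2}w^3=1/2$ and $\int e^{-w^2}w^5=1$ then give the coefficients $2(\lambda_i^{(2)})^{-2}t^{-2}$ and $2K_i(\lambda_i^{(2)})^{-3}t^{-3}$, after the factor $2\operatorname{Re}$ and the constants are collected.

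Evaluating $C_0$ is elementary. By \eqref{concreteprepare}, $\lambda_i^{(2)}=2\pi AB(2i+1)/\operatorname{vol}(M)$, so
\[
2\sum_{i\ge0}(\lambda_i^{(2)})^{-2}=\frac{2\operatorname{vol}(M)^2}{4\pi^2A^2B^2}\cdot\frac{\pi^2}{8}=\frac{\operatorname{vol}(M)^2}{16A^2B^2},
\]
which is \eqref{heis-C0-second}. Formulas \eqref{heatheisenberg-normalized-two-term} and \eqref{heis-c1-second} follow by factoring out $C_0$. The $c^{-2}$ rescaling follows because replacing $\Delta$ by $c\Delta$ multiplies every $\lambda_i^{(2)}$ by $c$.

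\emph{Main obstacle: uniformity in $i$.} The sum over $i$ must commute with the asymptotic expansion, and the $O(u_i^3)$ and $O(t^{-2})$ remainders must be summable. I would get this from explicit oscillator growth bounds. By \eqref{lambda4-matrix}, $\lambda_i^{(4)}=O(i^2)$, since $L^{(2)}$ is quadratic and the Green potentials are bounded. Likewise $B_i^{\rm loc}=O(i)$ from \eqref{saction}--\eqref{ddssquare}. Hence $K_i=O(i)$, and $\sum K_i(\lambda_i^{(2)})^{-3}$ converges absolutely. The Morse substitution itself is only valid for $u_i\lesssim i^{-1/2}$, so I would split $|h|<\varepsilon/i$ from the rest. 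On the inner piece the substitution applies, with a remainder that is $O(i^{2}t^{-4}(\lambda_i^{(2)})^{-4})$ after the Gaussian integral. On the outer piece $\lambda_{0,i}(h)\gtrsim ih$ is bounded below, so that piece is exponentially small and summable. If this branchwise control turns out awkward, the fallback is the trace-level Volterra argument of Theorem~\ref{all-order-model-summability} with $Q=4$: its order-balance bound controls the full coefficient sum at order $N=2$ at once, and the explicit formulas are used only to identify the coefficients.
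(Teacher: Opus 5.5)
Your proposal follows the paper's own route: Sunada localization, rational transfer with the $|h|\,dh$ model density, the per-branch Morse substitution \eqref{heis-morse-second-lambda}--\eqref{heis-second-density}, the two Gaussian moments, and oscillator growth bounds for summability in $i$; your explicit estimates $\lambda_i^{(4)}=O(i^2)$, $K_i=O(i)$ and the cutoff $|h|\lesssim\varepsilon/i$ supply detail that the paper's remainder paragraph only asserts. One small correction: the $u_i^3$ term vanishes because $\sqrt h\mapsto-\sqrt h$ is conjugation by the parity $s\mapsto -s$, which fixes each $\psi_i$ up to sign and makes the whole fiber factor even in $u_i$; pairing the two central signs does not remove it, and only cancels imaginary parts such as $2\pi\iu I_{12}$ at order $u_i^2$.
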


\begin{remark}
The coefficient \(c_1(p,q)\) is written as a convergent oscillator sum rather
than as a single compressed scalar because the term \(\lambda_i^{(4)}\) itself
contains the geometric Green-operator contribution.  Substituting the recursive
formulae \eqref{fi2} and \eqref{lambda4-matrix} expresses
\eqref{heis-c1-second} entirely through \(\omega_1,\omega_2,\omega_{12}\), the
Green operator \(G_M\), and the oscillator algebra.  This is the Heisenberg
analogue of the explicit constants in the abelian Kotani-Sunada expansion.
\end{remark}

\subsection{Remainders and spectral summability}

After the radial change of variables, exponential decay in a fixed fiber
becomes a polynomial weight in the model eigenvalue.  In the Heisenberg case
the explicit oscillator spectrum and polynomial creation--annihilation matrix
elements give absolute convergence at every fixed order.
Proposition~\ref{rational-spectral-transfer} controls the corresponding exact
rational finite-fiber observables uniformly in the denominator, and
Lemma~\ref{high-height-transfer-lemma} passes the remainder to the Pytlik
functional.  Thus no additional transfer or summability hypothesis is needed
for Theorem~\ref{leadingsecond}.

\subsection[Comparison with the standard Heisenberg heat kernel]
{Comparison with the standard heat kernel on \({\rm Heis}_3(\mathbb R)\)}
\label{comparison-to-explicit-formula}

We compare the corrected leading coefficient with the standard heat kernel of
the real Heisenberg group.  Let
\[
  L=X^2+Y^2
\]
be the usual sub-Laplacian generator, so that the positive operator in the
present convention is \(\Delta=-L\).  In symmetric exponential coordinates
\(g=(x,y,u)\), with group law
\[
 (x,y,u)(x',y',u')
 =
 \left(x+x',y+y',u+u'+\frac12(xy'-yx')\right),
\]
put \(r^2=x^2+y^2\).  The heat density based at the identity is
\begin{align}
 p_t(x,y,u)
 &=
 \frac{1}{8\pi^2t^2}
 \int_{-\infty}^{\infty}
 \exp\!\left(
   \frac{\iu\lambda u}{t}
   -\frac{r^2}{4t}\lambda\coth\lambda
 \right)
 \frac{\lambda}{\sinh\lambda}\,d\lambda
 \label{standard-heis-kernel}
 \\
 &=
 \frac{1}{4\pi^2t^2}
 \int_0^{\infty}
 \exp\!\left(
   -\frac{r^2}{4t}\lambda\coth\lambda
 \right)
 \frac{\lambda}{\sinh\lambda}
 \cos\!\left(\frac{\lambda u}{t}\right)d\lambda .
 \notag
\end{align}
This is the standard Fourier formula in the normalization of
Bakry--Baudoin--Bonnefont--Chafa\"i~\cite{BakryHeis}.  The upper-triangular
matrix coordinate \(z\) used in~\eqref{formonHeisenberg} is related to the
symmetric central coordinate by
\[
  u=z-\frac12xy.
\]
Changing between left and right invariant conventions changes the sign of the
central coordinate in~\eqref{standard-heis-kernel}, but not the real cosine
form or the value on the diagonal.

Putting \(x=y=u=0\) in~\eqref{standard-heis-kernel} gives
\begin{equation}
 p_t(e)
 =
 \frac{1}{8\pi^2t^2}
 \int_{-\infty}^{\infty}
 \frac{\lambda}{\sinh\lambda}\,d\lambda
 =
 \frac{1}{16t^2},
\label{standard-heis-origin}
\end{equation}
since
\[
 \int_{-\infty}^{\infty}
 \frac{\lambda}{\sinh\lambda}\,d\lambda
 =\frac{\pi^2}{2}.
\]
Thus the concrete formula~\eqref{standard-heis-kernel} has exactly the
homogeneous scaling \(p_t(e)=t^{-2}p_1(e)\) expected from the homogeneous
dimension four.

For the standard Heisenberg nilmanifold one has
\[
 \|\omega_1\|_{L^2(M)}=
 \|\omega_2\|_{L^2(M)}=
 \operatorname{vol}(M)=1.
\]
The model oscillator is
\[
 \mathcal H=-\frac{d^2}{ds^2}+4\pi^2s^2,
 \qquad
 \zeta_{\mathcal H}(2)=\frac{1}{32}.
\]
Because the real Heisenberg Plancherel formula contains both nonzero central
signs, the coefficient obtained above is
\[
 2\zeta_{\mathcal H}(2)=\frac{1}{16},
\]
in agreement with~\eqref{standard-heis-origin}.  This check also explains why
retaining only one central sign would miss a factor of two.  If the positive
operator is replaced by \(\Delta/2\), time scaling multiplies the coefficient
by \(4\).

\subsection{A local central limit theorem for heat kernels}
\label{heatclt}

We now record the local central-limit profile which is implicit in the
preceding Heisenberg computation.  This is the local, pointwise statement
behind the fixed-point coefficient in Theorem~\ref{leadingsecond}.  It is the
analogue, for the Heisenberg covering, of the local limit theorem of
Kotani--Sunada for abelian coverings~\cite{KotaniSunada1,KotaniSunada2}.

Let \(p,q\) be lifted points in the covering and let \(\gamma_t\in
{\rm Heis}_3(\mathbb Z)\) be a family of deck transformations.  We use the
right Lie integral along a lifted path from \(\gamma_t q\) to \(p\) and write
\begin{align*}
I_{1,t}&=\int_{\gamma_t q}^{p}\omega_1,
&
I_{2,t}&=\int_{\gamma_t q}^{p}\omega_2,\\
I_{12,t}&=\int_{\gamma_t q}^{p}(\omega_{12}+\omega_2\omega_1).
\end{align*}
Assume that the Heisenberg-scaled displacement has a limit
\begin{equation}
  \frac{I_{1,t}}{\sqrt t}\to a,
  \qquad
  \frac{I_{2,t}}{\sqrt t}\to b,
  \qquad
  \frac{I_{12,t}}{t}\to c .
\label{heis-local-scaling}
\end{equation}
The central coordinate is scaled by \(t\), rather than by \(\sqrt t\), in
accordance with the Heisenberg dilation.

For \(w>0\), set
\begin{equation}
  \mathcal W_{a,b,c}(w)
  =
  \exp(2\pi\iu\,c w^2)
  \exp(2\pi\iu\,bws)
  \exp(awD_s),
  \qquad D_s=\frac{d}{ds}.
\label{heis-local-weyl}
\end{equation}
The order of the last two factors is the order coming from the right
Lie-integral convention and the scaled Schr\"odinger model.  If one combines
these factors into a single exponential, the usual Baker--Campbell--Hausdorff
correction supplies the familiar central term.

\begin{theorem}[Local central-limit profile for the Heisenberg heat kernel]
\label{HeatCLT}
Assume, in addition to \eqref{heis-local-scaling}, that
Proposition~\ref{rational-spectral-transfer} and the oscillator majorant hold
uniformly for the moving displacement parameters \((a,b,c)\) in the compact
set under consideration.  This is the only extra uniformity beyond the fixed
point theorem.  The density is still the exact one in
Proposition~\ref{model-density}.  Then the
leading local profile is
\begin{equation}
 t^2 k_X(t,p,\gamma_t q)
 \longrightarrow
 \mathcal K_{\rm Heis}(a,b,c),
\label{heis-local-clt-limit}
\end{equation}
where \(\mathcal W_{a,b,c}(w)\) is the Weyl-type operator defined in
\eqref{heis-local-weyl}, and
\begin{equation}
\mathcal K_{\rm Heis}(a,b,c)
=
4\operatorname{Re}
\sum_{i=0}^{\infty}
\int_0^{\infty}
 e^{-\lambda_i^{(2)}w^2}
 \left(\mathcal W_{a,b,c}(w)\psi_i,\psi_i\right)_{L^2(\mathbb R)}
 w^3\,dw .
\label{heis-local-profile}
\end{equation}
Here \(\lambda_i^{(2)}=\mu_i/\operatorname{vol}(M)\), and
\(\psi_i\) is the normalized eigenfunction of the oscillator
\(\mathcal H\) in \eqref{HC}.  The real part reflects the symmetric
contribution of the two signs of the central parameter.
\end{theorem}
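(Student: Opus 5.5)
The argument follows the fixed-point proof of Theorem~\ref{leadingsecond} and retains only the leading order, with the transport factor \(S_h(p,\gamma_t q)\) no longer expanded in powers of \(\sqrt h\).

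\textbf{Step 1: exact trace and localization.} Start from the exact finite-dimensional inversion \eqref{Heatasymptoticsmain1} at the points \((p,\gamma_t q)\). Sunada's theorem localizes to \(U_\varepsilon\) as in \eqref{reductionbysunada}, with an error \(O(e^{-c_\varepsilon t})\), which is negligible after multiplication by \(t^2\). By the assumed uniform version of Proposition~\ref{rational-spectral-transfer}, together with Lemma~\ref{high-height-transfer-lemma}, the localized rational observable equals its Schr\"odinger normal-form value. Proposition~\ref{model-density} then evaluates it as the model integral \(\int_{-\varepsilon}^{\varepsilon}\cdots\,C_{\rm Pl}|h|\,dh\). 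As in \eqref{HeatSch1} and \eqref{Sh}, the excited base clusters \(k\ge1\) contribute \(O(e^{-ct})\). The ground-cluster integrand factors as
\[
\operatorname{Tr}\bigl(S_h(p,\gamma_t q)\,e^{-tL_h}\text{-cluster}\bigr).
\]

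\textbf{Step 2: scaling.} Substitute \(h=w^2/t\) on the positive branch, so that \(h\,dh=2w^3\,dw/t^2\). Then:
\begin{itemize}
\item \(\lambda_{0,i}(h)t=\lambda_i^{(2)}w^2+O(w^4/t)\);
\item \(f_{0,i,h}=V^{-1/2}\psi_i+O(h)\);
\item the transport factor is \(\rho_h\) applied to the Lie integral from \(\gamma_t q\) to \(p\). By the explicit Heisenberg formula for \(S_h\), it equals \(e^{2\pi\iu h I_{12,t}}e^{2\pi\iu\sqrt h I_{2,t}s}e^{\sqrt h I_{1,t}D_s}\).
\end{itemize}
With \(\sqrt h=w/\sqrt t\) and \eqref{heis-local-scaling}, the parameters \(hI_{12,t}\to cw^2\), \(\sqrt hI_{2,t}\to bw\) and \(\sqrt hI_{1,t}\to aw\). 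The transport factor therefore converges strongly to \(\mathcal W_{a,b,c}(w)\), and each oscillator matrix element converges to \((\mathcal W_{a,b,c}(w)\psi_i,\psi_i)\). The base normalization \(V^{-1}\) from the two eigenfunctions combines with the base integration. The factor \(2\) from \(h\,dh\) and the pairing of the two central signs give \(4\operatorname{Re}\), which produces \eqref{heis-local-profile} with the convention that \(C_{\rm Pl}\) is absorbed into the normalization.

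\textbf{Step 3: passing the limit through \(\sum_i\int dw\) (main obstacle).} Unlike the fixed-point case, no Taylor expansion of the transport is available, because \(aw\) is not small. Instead I would use unitarity: \(|(\mathcal W\psi_i,\psi_i)|\le1\), and the same bound holds for the finite-\(t\) matrix elements up to the \(O(h)\) eigenfunction corrections. This gives the majorant \(\sum_i\int_0^\infty e^{-\lambda_i^{(2)}w^2/2}w^3\,dw\lesssim\sum_i(\lambda_i^{(2)})^{-2}<\infty\), which is the oscillator majorant assumed uniformly on the compact set of \((a,b,c)\). It remains to bound the eigenvalue remainder on \(w\le\varepsilon\sqrt t\). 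Use \(\lambda_{0,i}(h)\ge\tfrac12\lambda_i^{(2)}h\) for small \(\varepsilon\), which follows from Sunada's bound and the expansion; I expect the uniformity in \(i\) here to be the delicate point, and it is exactly where the assumed uniform majorant is invoked. Dominated convergence then yields \eqref{heis-local-clt-limit}. The eigenfunction corrections contribute \(O(w^2/t)\) times polynomially bounded oscillator matrix elements, so they vanish in the limit by the same majorant.
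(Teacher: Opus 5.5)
Your proposal is correct and follows the paper's own proof. The steps match: Sunada localization, the assumed uniform rational transfer with the model density $|h|\,dh$, the rescaling under which $e^{-t\lambda_{0,i}(h)}\to e^{-\lambda_i^{(2)}w^2}$ and $S_h(p,\gamma_tq)\to\mathcal W_{a,b,c}(w)$ strongly, the factor $2w^3t^{-2}\,dw$ from $h\,dh$, the factor $4\operatorname{Re}$ from pairing the two central signs, and the assumed uniform oscillator majorant to pass the limit through the sum over $i$ and the $w$-integral.

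The differences are cosmetic. The paper uses the branchwise Morse variable $h^{1/2}=u_i+O(u_i^3)$, $w=\sqrt t\,u_i$, so the exponent is exactly $\lambda_i^{(2)}w^2$; you substitute $h=w^2/t$ and dominate the $O(w^4/t)$ eigenvalue remainder instead. One remark is inaccurate: you say the factor $V^{-1}$ from $f_{0,i,h}\approx V^{-1/2}\psi_i$ combines with a base integration, but a pointwise kernel has no base integration. That factor is simply absorbed into the normalization together with $C_{\rm Pl}$, as the paper also does.
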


\begin{proof}
In the leading Heisenberg expression used in the proof of Theorem~\ref{leadingsecond}, keep only the leading part of the lowest
cluster.  Put \(h^{1/2}=u_i+O(u_i^3)\) and then set
\(w=\sqrt t\,u_i\).  The eigenvalue factor gives
\(e^{-\lambda_i^{(2)}w^2}\), while the representation factor
\(S_h(p,\gamma_tq)=\rho_h({}_R\!\int_{\gamma_tq}^{p}\omega)\) converges,
under \eqref{heis-local-scaling}, to \(\mathcal W_{a,b,c}(w)\).  On the positive branch the Plancherel factor \(h\,dh\) contributes
\(2w^3t^{-2}dw\) at leading order.  Pairing the negative branch by complex
conjugation gives the factor \(4\operatorname{Re}\) in
\eqref{heis-local-profile}.  The moving-point uniform transfer estimate and the displayed oscillator
majorant justify passage to the limit and termwise summation.
\end{proof}

For fixed lifted points, or for deck transformations with sub-diffusive
Heisenberg displacement, one has \((a,b,c)=(0,0,0)\).  Since
\[
4\int_0^{\infty} e^{-\lambda_i^{(2)}w^2}w^3\,dw
=\frac{2}{(\lambda_i^{(2)})^2},
\]
Theorem~\ref{HeatCLT} gives
\begin{equation}
\mathcal K_{\rm Heis}(0,0,0)
=
2\sum_{i=0}^{\infty}\frac{1}{(\lambda_i^{(2)})^2}
=
\frac{\operatorname{vol}(M)^2}
{16\|\omega_1\|_{L^2(M)}^2\|\omega_2\|_{L^2(M)}^2},
\label{heis-local-at-origin}
\end{equation}
which is exactly the leading coefficient in Theorem~\ref{leadingsecond}.
Thus the fixed-class heat coefficient is the value at the origin of the local
Heisenberg central-limit profile.

\subsection
[Leading and higher-order terms: nilpotent groups]
{Leading term and higher-order corrections
for general torsion-free nilpotent groups}
\label{leadingnilpotent}

We now derive Theorem~\ref{conj-heat} from the exact rational transfer and
the all--order Rockland summability theorem for a general finitely generated
torsion-free nilpotent group \(\Gamma\).  Let \(G\) be the Malcev
completion of \(\Gamma\), and let \(d\) be the polynomial growth
degree of \(\Gamma\).

The proof follows the same structure as in the Heisenberg case.  The
Pytlik-type inversion functional, called the
Pytlik-type functional in the companion foundation paper, expresses the heat
kernel as a normalized trace functional over finite-dimensional rational
representations.  Its geometric picture is the rational finite-dimensional skeleton inside
the ambient Kirillov--Bloch parameter space, while the finite-quotient model
fixes the algebraic normalization of the same functional.  Sunada's pointwise lower bound and positivity localize the normalized heat
observable; Proposition~\ref{rational-spectral-transfer} performs the
high-denominator replacement inside that neighborhood.

The localized rational finite-dimensional fibers then have large-height normal
forms described by the Kirillov models \(\pi_l\) of the Malcev completion \(G\).
Thus the asymptotic analysis is organized by the twisted Laplacians associated
with these smooth normal forms, while the trace identity remains
finite-dimensional and rational.  We choose a measurable Plancherel-regular Kirillov
cross-section
\[
\Sigma_{\rm reg}\subset \mathfrak g^*
\]
for the coadjoint-orbit parameters which occur in the Kirillov--Fujiwara
Plancherel formula.  Choose homogeneous polar coordinates on this cross-section,
written as
\[
 l=\delta_{\sqrt r}\theta,
 \qquad
 0<r<\varepsilon,
 \qquad
 \theta\in\Theta,
\]
where \(\Theta\) is a compact angular cross-section.  By
Proposition~\ref{model-density}, the principal localized action of the selected
geometric rational functional on the continuous model observables is the
homogeneous density
\begin{equation}
J(\theta)r^{d/2-1}\,dr\,d\theta .
\label{model-angular-density}
\end{equation}
The factor \(J(\theta)\) contains the Pfaffian/Jacobian contribution and the
normalization coming from the dimension and multiplicity factors of the
finite-dimensional rational fibers.

As explained in Section~\ref{asymnilpotent}, after conjugation by the section
\[
S_{\pi_l}(p)
=
\pi_l\!\left({}_R\!\int_{p_0}^{p}\omega\right),
\]
the lowest cluster of the twisted Laplacian has an expansion whose first
nonzero term is governed by the canonical hypoelliptic model operator.  For
\(\theta\in\Theta\) we denote this operator by \(\mathcal H_\theta\), and write
\[
\mathcal H_\theta\psi_i(\theta)=\mu_i(\theta)\psi_i(\theta).
\]
The corresponding lowest eigenvalues have the form
\[
\lambda_i(r,\theta)
=
\frac{r}{\operatorname{vol}(M)}\,\mu_i(\theta)
+
\text{higher-order terms}.
\]
Let \(a_{0,i}(p,q;\theta)\) be the leading local amplitude obtained from the
normalized eigenprojection and the Lie-integral gauge.  The leading heat
contribution is therefore
\[
\int_{\Theta}\int_0^\varepsilon
\sum_i a_{0,i}(p,q;\theta)
\exp\!\left(-\frac{t r\mu_i(\theta)}{\operatorname{vol}(M)}\right)
J(\theta)r^{d/2-1}\,dr\,d\theta,
\]
up to the usual lower-order amplitudes and exponentially small terms.  The
ordinary Laplace integral
\[
\int_0^\infty e^{-t r\mu/\operatorname{vol}(M)}r^{d/2-1}\,dr
=
\Gamma(d/2)\operatorname{vol}(M)^{d/2}t^{-d/2}\mu^{-d/2}
\]
gives
\begin{equation}
C_{\rm heat}(p,q)
=
\Gamma(d/2)\operatorname{vol}(M)^{d/2}
\int_{\Theta}J(\theta)
\sum_i a_{0,i}(p,q;\theta)\mu_i(\theta)^{-d/2}\,d\theta .
\label{general-heat-leading}
\end{equation}
This is the coefficient stated in Theorem~\ref{conj-heat}.  If
\(\mathcal H_\theta=\mathcal H\) and
\(a_{0,i}(p,q;\theta)=a_0(p,q)\) are independent of \(i\) and \(\theta\), then
\eqref{general-heat-leading} reduces to
\[
\Gamma(d/2)\operatorname{vol}(M)^{d/2}
 a_0(p,q)J_\Theta\zeta_{\mathcal H}(d/2),
\qquad
J_\Theta=\int_\Theta J(\theta)\,d\theta.
\]
The higher coefficients are obtained by the same recursive perturbation
procedure as in the Heisenberg case, with the one-dimensional harmonic
oscillator replaced by the model operators \(\mathcal H_\theta\) and their
oscillator-basis expansions.

Proposition~\ref{rational-spectral-transfer} justifies the finite-fiber
replacement, and Theorem~\ref{all-order-model-summability} justifies the
coefficient-weighted sums and interchanges at every prescribed order.  This
proves Theorem~\ref{conj-heat}.

\section
[Prime geodesics: nilpotent extensions]
{Asymptotics of prime closed geodesics
for nilpotent extensions of compact Riemann surfaces}
\label{Asymptoticsclosedgeodesics}

In this section we derive Theorem~\ref{theorem-geod}.  The argument combines
a twisted Selberg trace formula, following Phillips' method~\cite{Phillips},
with the same rational spectral transfer and all--order Rockland summability
used in the heat problem.  The normalization, central extraction, smoothing,
iterate estimate, and fixed-class partial summation require no additional
finitely additive hypothesis.

The structure is parallel to the heat-kernel case.  The
representation-theoretic decomposition localizes the spectral side
near the trivial representation, while the asymptotic contribution is
controlled by the same lowest eigenvalue expansion which appeared in
the heat-kernel analysis.

\subsection
[Selberg trace formula and central conjugacy classes]
{Selberg trace formula and central conjugacy classes}
\label{subsec:trace-central}

Let \(M\) be a compact Riemann surface of genus \(g\) with constant
curvature \(-1\), and let
\[
\Phi:\pi_1(M)\longrightarrow \Gamma
\]
be a surjective homomorphism onto a finitely generated torsion-free
nilpotent group \(\Gamma\).  We shall count prime closed geodesics
whose image under \(\Phi\) lies in a prescribed conjugacy class of
\(\Gamma\).

Let \(\pi_{\rm fin}\) be a finite-dimensional irreducible unitary
representation of \(\Gamma\), and let \(\Delta_{\pi_{\rm fin}}\) be
the Laplacian acting on sections of the flat bundle
\(E_{\pi_{\rm fin}}\).  We write
\[
\lambda_j(\pi_{\rm fin})
=
\frac14+r_j(\pi_{\rm fin})^2
\]
for the corresponding spectral parameters.  For an even compactly
supported \(C^\infty\) function \(h\) on \(\mathbb R\), the twisted
Selberg trace formula gives
\begin{align}
\sum_{j=0}^{\infty}\widehat h(r_j(\pi_{\rm fin}))
&=
2(g-1)
\dim(\pi_{\rm fin})
\int_{-\infty}^{\infty}
r\tanh(\pi r)\widehat h(r)\,dr
\notag\\
&\quad
+
\sum_{\gamma:\,{\rm primitive}}
\sum_{m=1}^{\infty}
\frac{
{\rm Tr}\big(\pi_{\rm fin}(\Phi(\gamma^m))\big)
\,\ell(\gamma)}
{2\sinh(m\ell(\gamma)/2)}
h(m\ell(\gamma)).
\label{Selbergtrace}
\end{align}
Here the sum over \(\gamma\) runs over primitive conjugacy classes of
\(\pi_1(M)\), identified with prime closed geodesics on \(M\), and
\(\ell(\gamma)\) denotes the length of \(\gamma\).
The displayed formula uses the standard hyperbolic denominator
\(2\sinh(m\ell(\gamma)/2)\).  If a different Selberg normalization is chosen,
the resulting harmless scalar is absorbed into the constant \(A_{\rm Sel}\)
below.

It is convenient to use the normalized trace
\[
{\rm tr}_{\pi_{\rm fin}}
=
\frac{1}{\dim(\pi_{\rm fin})}{\rm Tr}.
\]
Let \(\alpha\in\Gamma\) be central.  Since \(\pi_{\rm fin}(\alpha)\)
is a scalar operator by Schur's lemma, we have
\[
{\rm tr}_{\pi_{\rm fin}}(\alpha^{-1})
\,{\rm tr}_{\pi_{\rm fin}}(\beta)
=
{\rm tr}_{\pi_{\rm fin}}(\alpha^{-1}\beta)
\]
for every \(\beta\in\Gamma\).  Therefore the finite-dimensional
Fourier inversion formula may be modified as
\begin{equation}
f(\alpha)
=
\int_{\widehat X}
{\rm tr}_{\pi_{\rm fin},x}
\big(\pi_{{\rm fin},x}(\alpha^{-1})\pi_{{\rm fin},x}(f)\big)
\,d\mu(x)
=
\int_{\widehat X}
{\rm tr}_{\pi_{\rm fin},x}(\alpha^{-1})
{\rm tr}_{\pi_{\rm fin},x}(f)
\,d\mu(x),
\label{Fourier2}
\end{equation}
where \(d\mu\) is the Pytlik-type finitely additive Plancherel
functional on the finite-dimensional dual, with the same geometric
rational-skeleton convention and finite-quotient normalization as above.

Dividing~\eqref{Selbergtrace} by \(\dim(\pi_{\rm fin})\) and multiplying by
\({\rm tr}_{\pi_{\rm fin}}(\alpha^{-1})\) is exact.  For a compactly supported
test function, the geometric sum is finite and the spectral sum is absolutely
convergent after dimension normalization.  Applying the bounded positive
functional to this scalar identity therefore gives the dimension-normalized
averaged trace formula
\begingroup\small
\begin{align}
\label{trace}
\begin{split}
&\int_{\widehat X}
\frac{{\rm tr}_{\pi_{{\rm fin},x}}(\alpha^{-1})}
     {\dim(\pi_{{\rm fin},x})}
\left(
\sum_{j=0}^{\infty}
\widehat h(r_j(\pi_{{\rm fin},x}))
\right)
\,d\mu(x)
\\
&\qquad=
2(g-1)\delta_{\alpha,e}
\int_{-\infty}^{\infty}
r\tanh(\pi r)\widehat h(r)\,dr
\\
&\qquad\quad
+
\sum_{\gamma:\,{\rm primitive}}
\sum_{\substack{m\geq1\\ \Phi(\gamma^m)=\alpha}}
\frac{\ell(\gamma)}
{2\sinh(m\ell(\gamma)/2)}
h(m\ell(\gamma)).
\end{split}
\end{align}
\endgroup
Thus the geometric side isolates the prime geodesics whose
\(\Gamma\)-class is \(\alpha\).

\subsection{Choice of test function and localization}
\label{subsec:test-localization}

We choose the same test functions as in Phillips' argument.  Let
\(k\) be an even nonnegative compactly supported \(C^\infty\) function
on \(\mathbb R\) satisfying
\[
\int_{-\infty}^{\infty}k(s)\,ds=1.
\]
Put
\[
k_\varepsilon(s)=\varepsilon^{-1}k(s/\varepsilon),
\qquad
h_T=\chi_{[-T,T]}\ast k_\varepsilon .
\]
Then
\[
\widehat h_T(r)
=
\frac{2\sin(Tr)}{r}\widehat k(\varepsilon r),
\]
and \(\widehat k\) decays rapidly.

Sunada's inequality and positivity of the normalized functional give an
exponentially smaller averaged contribution away from a Kazhdan-neighborhood
of the trivial representation.  More precisely, there exists
\(\nu<1/2\) such that, outside a sufficiently small neighborhood \(U\)
of the trivial representation, all relevant spectral parameters
satisfy
\[
{\rm Im}\,r_j(\pi_{{\rm fin},x})\leq \nu .
\]
Hence the leading term comes only from the lowest spectral cluster
near the trivial representation.

In this neighborhood, the lowest eigenvalues of the twisted
Laplacian have the expansion
\[
\lambda_{0,i}(y)
=
q_i(y)+O(|y|^3),
\]
where \(y\) denotes local rational Plancherel parameters and \(q_i\)
is the positive quadratic form determined by the hypoelliptic model
operator.  When the localized parity symmetry is present, the cubic term
vanishes and the remainder improves to \(O(|y|^4)\).  Since
\[
\lambda_{0,i}(y)
=
\frac14+r_{0,i}(y)^2,
\]
we have
\begin{equation}
r_{0,i}(y)
=
\frac{\iu}{2}
\left(
1-2q_i(y)+O(|y|^3)
\right).
\label{asymprzero}
\end{equation}

\subsection{Analysis of the two sides of the Selberg trace formula}
\label{subsec:trace-analysis}

Let \(L(T)\) and \(R(T)\) denote the left- and right-hand sides of
the averaged trace formula~\eqref{trace} with the test function
\(h_T\).

The spectral side is dominated by the lowest branch.  Using
\eqref{asymprzero}, one obtains
\begin{align}
L(T)
&=
e^{T/2}
\int_U
\sum_i
\exp\!\left(
(-\iu r_{0,i}(y)-1/2)T
\right)
\frac{
{\rm tr}_{\pi_{{\rm fin},y}}(\alpha^{-1})
}
{-\iu r_{0,i}(y)}
\,d\mu(y)
\notag\\
&\quad
+
O\!\left(
\varepsilon e^{T/2}
+
\frac{T}{\varepsilon^2}
+
e^{\nu T}
\right).
\label{lefthandside}
\end{align}
The displayed error is the standard Phillips--Sarnak smoothing error; its
terms represent transform decay and the spectral contribution outside \(U\).

On the geometric side, the standard estimate for iterates and the same
smoothing convention give
\begin{equation}
R(T)
=
\sum_{\substack{\gamma:\,{\rm primitive}\\
\Phi(\gamma)=\alpha\\
\ell(\gamma)\leq T}}
\frac{\ell(\gamma)}
{2\sinh(\ell(\gamma)/2)}
+
O\!\left(
\frac{T^2}{\varepsilon^4}
+
\sqrt{\varepsilon}\,e^{T/2}
\right).
\label{righthandside}
\end{equation}

Choosing \(\varepsilon=e^{-\delta T}\) with \(\delta>0\) sufficiently
small and comparing~\eqref{lefthandside} and~\eqref{righthandside},
we obtain
\begin{align}
\sum_{\substack{\gamma:\,{\rm primitive}\\
\Phi(\gamma)=\alpha\\
\ell(\gamma)\leq T}}
\frac{\ell(\gamma)}
{2\sinh(\ell(\gamma)/2)}
&=
e^{T/2}
\int_U
\sum_i
\exp\!\left(
(-\iu r_{0,i}(y)-1/2)T
\right)
\frac{
{\rm tr}_{\pi_{{\rm fin},y}}(\alpha^{-1})
}
{-\iu r_{0,i}(y)}
\,d\mu(y)
\notag\\
&\quad
+
O(e^{\nu_1T})
\label{LR}
\end{align}
for some \(\nu_1<1/2\).

The integral on the right is evaluated by the same finite-order Laplace
method as in the heat-kernel case.  Propositions~\ref{model-density} and
\ref{rational-spectral-transfer}, together with
Theorem~\ref{all-order-model-summability}, give the weighted spectral expansion
\begin{equation}
 L(T)
 =
 e^{T/2}T^{-d/2}
 \left(
 \sum_{j=0}^{N}A_jT^{-j/2}+o(T^{-N/2})
 \right).
\label{finite-order-weighted-geodesic}
\end{equation}
The leading coefficient contains the same spectral-zeta integral that appears
in the heat-kernel asymptotics.  If the localized parity symmetry removes the
odd half-orders, only integral powers of \(T^{-1}\) occur.

Finally, the standard partial-summation and iterate estimate converts
\eqref{finite-order-weighted-geodesic} into
\begin{equation}
 \pi(T,\Phi,\alpha)
 =
 \frac{e^T}{T^{1+d/2}}
 \left(
 \sum_{j=0}^{N}C_j^{\rm geo}(\alpha)T^{-j/2}
 +o(T^{-N/2})
 \right).
\label{finite-order-unweighted-geodesic}
\end{equation}
This proves Theorem~\ref{theorem-geod}.

\subsection
[Heisenberg extensions: detailed asymptotics]
{More detail on the Heisenberg extension}
\label{AsymptoticsHeisenberggeodesics}

We now specialize to
\[
\Gamma={\rm Heis}_3(\mathbb Z).
\]
The purpose of this subsection is to record how the subleading terms
are obtained from the explicit expansion of the lowest spectral
branch.

For the Heisenberg extension, the lowest eigenvalues admit the
expansion
\begin{equation}
\lambda_{0,i}(x)
=
\lambda_i^{(2)}x^2
+
\lambda_i^{(4)}x^4
+
O(x^6),
\label{geod-lambda-heis}
\end{equation}
up to the normalization convention for the local parameter \(x\).
Accordingly,
\begin{align}
r_{0,i}(x)
&=
\frac{\iu}{2}
\left(
1-2\lambda_{0,i}(x)+O(\lambda_{0,i}(x)^2)
\right)
\notag\\
&=
\frac{\iu}{2}
\left(
1-2\lambda_i^{(2)}x^2
-
2\bigl(\lambda_i^{(4)}+(\lambda_i^{(2)})^2\bigr)x^4
+
O(x^6)
\right).
\label{higherasymprzero}
\end{align}
Substitution into~\eqref{lefthandside}, followed by rational spectral
transfer, all--order oscillator summability, and the standard fixed-class
Selberg steps, gives the expansion to every prescribed finite order.  The
leading coefficient, with the Selberg normalization used in
Theorem~\ref{theorem-geod}, is
\begin{equation}
C_{\rm Heis}
=
A_{\rm Sel}\,\operatorname{vol}(M)^2\,
\zeta_{\mathcal H}(2).
\label{leadingcoefficient}
\end{equation}
Since
\[
\zeta_{\mathcal H}(2)=\frac{1}{32A^2B^2}
\]
in the Heisenberg model, and since
\[
{\rm vol}(M)=4\pi(g-1)
\]
for a compact hyperbolic surface of curvature \(-1\), this gives the explicit
constant stated in Theorem~\ref{heisenberg-geod} once the horizontal basis and
Selberg normalization are fixed.

The first correction term is obtained by inserting the fourth
coefficient \(\lambda_i^{(4)}\), computed in
Section~\ref{concrete}, into the expansion of
\(r_{0,i}(x)\) and then summing over the harmonic oscillator index
\(i\).  Thus the coefficient \(c_1\) is not a single contribution
from one eigenvalue branch; it is the sum of the corresponding
branchwise corrections.  Schematically,
\begin{equation}
c_1
=
\sum_{i\geq0}
{\mathcal C}_i
\left(
\lambda_i^{(2)},\lambda_i^{(4)},
\omega_1,\omega_2,\omega_{12},G_M
\right),
\label{subleadingcoefficient}
\end{equation}
where \(G_M\) is the Green operator of \(\Delta_M\), and the explicit
branchwise expression is obtained from~\eqref{lambda4-matrix}.  In
particular, \(c_1\) is determined by the harmonic components
\(\omega_1,\omega_2\), the coexact central component
\(\omega_{12}\), and the Green operator on \(M\).

\subsection{The first correction in the Heisenberg central geodesic theorem}
\label{subsec:heis-geod-second-term}

We also record the first inverse-length correction in the Heisenberg central
geodesic theorem.  This is the closed-geodesic analogue of
Theorem~\ref{leadingsecond}.  The only new point, compared with the heat
kernel calculation, is the relation between the lowest eigenvalue and the
Selberg spectral parameter.

With the local parameter normalized as in the Phillips-Sarnak trace formula,
we write
\begin{equation}
\lambda_{0,i}(x)
=
\frac{\lambda_i^{(2)}}{2}x^2+
\frac{\lambda_i^{(4)}}{24}x^4+O(x^6).
\label{asymplambda-heis-geod-2}
\end{equation}
Since
\[
\lambda_{0,i}(x)=\frac14+r_{0,i}(x)^2,
\]
the branch with \(r_{0,i}(0)=\iu/2\) satisfies
\begin{equation}
r_{0,i}(x)
=
\frac{\iu}{2}
\left(
1-\lambda_i^{(2)}x^2-
\left(\frac{\lambda_i^{(4)}}{12}
+\frac{(\lambda_i^{(2)})^2}{2}\right)x^4+O(x^6)
\right).
\label{higherasymprzero-heis-geod-2}
\end{equation}
Substitution into the spectral side of the averaged Selberg trace formula,
followed by the ordinary Laplace calculation and the standard partial
summation step, gives
\begin{equation}
\pi(T,\Phi,\alpha)
\sim
\frac{C_{\rm geo}\,e^T}{T^3}
\left(1+\frac{c_{1,\rm geo}(\alpha)}{T}+O(T^{-2})\right)
\label{heis-geod-two-term}
\end{equation}
for central \(\alpha\in {\rm Heis}_3(\mathbb Z)\).  The leading coefficient is
\begin{equation}
C_{\rm geo}
=
A_{\rm Sel}\,\operatorname{vol}(M)^2\,
\zeta_{\mathcal H}(2),
\label{heis-geod-C}
\end{equation}
with the same \(A_{\rm Sel}\) convention as in Theorem~\ref{theorem-geod}.  With
our displayed Selberg trace formula \eqref{Selbergtrace}, \(A_{\rm Sel}=1\);
under another convention it is the explicit scalar converting that convention
to the one used here.  For a compact
hyperbolic surface of genus \(g\) and a fixed standard normalization of the
horizontal basis, this specializes to the corresponding numerical constant.
The first correction is obtained by inserting the corrected quartic coefficient
in~\eqref{higherasymprzero-heis-geod-2}, together with the next amplitude,
smoothing, and weighted-to-unweighted terms.  In particular, the branchwise
spectral contribution depends on the combination
\[
\frac{\lambda_i^{(4)}}{12}
+\frac{(\lambda_i^{(2)})^2}{2},
\]
not on \(\lambda_i^{(4)}\) alone.
Proposition~\ref{rational-spectral-transfer} and the explicit oscillator
bounds give the absolutely convergent normalized sum of these branchwise
contributions, weighted by the central character factor and the corresponding
Selberg amplitudes.  Formula~\eqref{lambda4-matrix}, with its
\(\operatorname{vol}(M)^{-1/2}\) normalization, expresses the genuinely
nilpotent part in terms of \(\omega_1,\omega_2,\omega_{12}\), and the Green
operator \(G_M\).  The smoothing and weighted-to-unweighted corrections are
the standard fixed-class Selberg terms and are kept explicit rather than
compressed into an unspecified bridge constant.  In particular, the same
Green-operator coefficient functions and
the same coexact central term appearing through \(L^{(4)}\) also occur in the
first correction to the central Heisenberg geodesic asymptotic.

\subsection{Non-central conjugacy classes in the Heisenberg group}
\label{subsec:noncentral-heisenberg}

We next comment on non-central conjugacy classes in
\({\rm Heis}_3(\mathbb Z)\).  Let
\[
\gamma=u^a v^b w^c
\]
in terms of the standard generators.  A direct computation gives
\[
u\gamma u^{-1}=\gamma w^b,
\qquad
v\gamma v^{-1}=\gamma w^{-a},
\qquad
w\gamma w^{-1}=\gamma .
\]
Hence, if
\[
r=\gcd(a,b),
\]
then the conjugacy class of \(\gamma\) is
\[
[\gamma]
=
\{\gamma w^{rn}\mid n\in\mathbb Z\}.
\]
Thus a non-central conjugacy class projects to a nontrivial element
of the abelianization
\[
{\rm Heis}_3(\mathbb Z)/
[{\rm Heis}_3(\mathbb Z),{\rm Heis}_3(\mathbb Z)]
\simeq
\mathbb Z^2,
\]
together with an arithmetic progression in the central direction.

For this reason, the leading distribution of non-central classes is
governed primarily by the abelian quotient.  The genuinely nilpotent
phenomena studied above are concentrated in central conjugacy
classes.  A treatment of non-central classes would have to combine the abelian
displacement analysis of Lalley~\cite{Lalley}, Babillot--Ledrappier~\cite{Babillot},
and Anantharaman~\cite{Anantharaman2} with uniform control of the remaining
central progression.  That problem is outside the scope of the present
central-class theorem.

\subsection
[Local CLT for closed geodesics]
{A local central limit theorem for closed geodesics
in Heisenberg extensions}
\label{subsec:geodesic-clt}

We finally record the local central-scaling profile which appears on the
Selberg side in the Heisenberg case.  In abelian extensions, local central
limit theorems for prime closed geodesics were obtained by
Lalley~\cite{Lalley} and refined by Babillot--Ledrappier~\cite{Babillot} and
Anantharaman~\cite{Anantharaman2}.  The statement below is the corresponding
Heisenberg model profile.  Unlike the fixed central class theorem, a moving
class requires uniformity when \(m_T/T\) varies.  This extra moving-parameter
uniformity is stated explicitly below; the smoothing, iterate, and
weighted-to-unweighted steps are otherwise the same as in the fixed-class
argument.

Let \(w\) denote the standard central generator of
\({\rm Heis}_3(\mathbb Z)\), and let
\[
   \alpha_T=w^{m_T},
   \qquad
   \frac{m_T}{T}\longrightarrow c\in\mathbb R .
\]
The scaling by \(T\) is the natural central scaling: the center has degree two
under the Heisenberg dilation.  On a fiber with central parameter \(h\), the
central character contributes
\[
   \rho_h(\alpha_T^{-1})=\exp(-2\pi\iu\,m_T h).
\]
After the lowest-branch change of variables \(h=u_i^2\) and
\(\xi=\sqrt T\,u_i\), this factor converges to
\(\exp(-2\pi\iu\,c\xi^2)\).

Define the central profile
\begin{equation}
\mathcal G_{\rm Heis}(c)
=
4\operatorname{Re}
\sum_{i=0}^{\infty}
\int_0^{\infty}
 e^{-\lambda_i^{(2)}\xi^2}
 e^{-2\pi\iu\,c\xi^2}
 \xi^3\,d\xi .
\label{geod-local-profile-integral}
\end{equation}
Equivalently,
\begin{equation}
\mathcal G_{\rm Heis}(c)
=
2\operatorname{Re}
\sum_{i=0}^{\infty}
\frac{1}{(\lambda_i^{(2)}+2\pi\iu\,c)^2}.
\label{geod-local-profile-zeta}
\end{equation}
In particular,
\(\mathcal G_{\rm Heis}(0)=2\sum_i(\lambda_i^{(2)})^{-2}\), the fixed
central-class heat coefficient before the weighted-to-unweighted geodesic
conversion.

Let
\[
L(T,\Phi,w^{m_T})
=
\sum_{\substack{\gamma:\,{\rm primitive}\\
\Phi(\gamma)=w^{m_T}\\
\ell(\gamma)\leq T}}
\frac{\ell(\gamma)}{2\sinh(\ell(\gamma)/2)}
\]
be the weighted geometric sum which occurs on the geometric side of the
normalized Selberg trace formula.

\begin{proposition}[Local central profile on the weighted Selberg side]
\label{GeodesicCLT}
With the notation above, the model contribution of the moving central class
\(w^{m_T}\) satisfies
\begin{equation}
T^2e^{-T/2}L(T,\Phi,w^{m_T})
\longrightarrow
A_{\rm Sel}\,\mathcal G_{\rm Heis}(c),
\label{WeightedGeodesicCLT-limit}
\end{equation}
provided Proposition~\ref{rational-spectral-transfer} and the oscillator
majorant hold uniformly for every sequence \(m_T/T\to c\) under consideration,
including the oscillatory central-character factor.  This moving-class
uniformity is additional to the fixed-central-class theorem.  The profile
\(\mathcal G_{\rm Heis}(c)\) is the oscillatory
harmonic-oscillator sum defined in \eqref{geod-local-profile-integral},
equivalently in \eqref{geod-local-profile-zeta}.  The constant
\(A_{\rm Sel}\) has the same normalization as in Theorem~\ref{theorem-geod}.
Consequently, under the corresponding weighted-to-unweighted Tauberian
conversion, the prime counting function has the local central profile
\begin{equation}
\frac{T^3}{e^T}\,\pi(T,\Phi,w^{m_T})
\longrightarrow
\frac12 A_{\rm Sel}\,\mathcal G_{\rm Heis}(c).
\label{GeodesicCLT-limit}
\end{equation}
\end{proposition}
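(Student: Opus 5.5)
The plan is to repeat the fixed-central-class argument of Subsection~\ref{subsec:trace-analysis} with \(\alpha\) replaced by the moving class \(\alpha_T=w^{m_T}\). I would track the oscillatory central character through the Laplace step instead of evaluating it at \(h=0\).

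\emph{Exact normalized identity.} Start from the dimension-normalized averaged trace formula \eqref{trace} with the test function \(h_T\). Since \(w^{m_T}\) is central, Schur's lemma and \eqref{Fourier2} make the extraction exact for every \(T\). The geometric side is then controlled by \eqref{righthandside}. The smoothing and iterate errors there do not depend on the class, so they are uniform in \(m_T\).

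\emph{Localization and the spectral side.} Sunada's inequality localizes the spectral side to a Kazhdan neighbourhood \(U\) at cost \(O(e^{\nu T})\), because \(|{\rm tr}(\alpha_T^{-1})|\le1\). Inside \(U\), the exact rational observable carries the factor \(\rho_h(\alpha_T^{-1})=e^{-2\pi\iu m_Th}\). By the assumed moving-class uniformity of Proposition~\ref{rational-spectral-transfer}, together with Lemma~\ref{high-height-transfer-lemma}, this observable may be replaced by its Schr\"odinger normal form. Proposition~\ref{model-density} then evaluates it with the density \(C_{\rm Pl}|h|\,dh\). Inserting \eqref{asymprzero} on the lowest branch gives, for \(h>0\), the integrand
\[
e^{T/2}\sum_i e^{-T\lambda_i^{(2)}h(1+O(h^{1/2}))}e^{-2\pi\iu m_Th}\,\frac{h\,dh}{1/2+O(h)} .
\]
The \(h<0\) branch is its complex conjugate.

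\emph{Scaling and limit.} Substitute \(h=u_i^2\) and \(\xi=\sqrt T\,u_i\). Then \(h\,dh=2T^{-2}\xi^3\,d\xi\), the phase becomes \(e^{-2\pi\iu(m_T/T)\xi^2}\to e^{-2\pi\iu c\xi^2}\), and the higher-order corrections are \(O(T^{-1/2})\) pointwise. The factor \(1/(-\iu r_{0,i})\to2\) together with the two central signs produces the \(4\operatorname{Re}\) in \eqref{geod-local-profile-integral}, with \(A_{\rm Sel}\) and \(C_{\rm Pl}\) absorbed by normalization. To pass to the limit under the \(\xi\)-integral and the \(i\)-sum, I would use dominated convergence with the oscillator majorant \(\sum_i\int e^{-\lambda_i^{(2)}\xi^2/2}\xi^3\,d\xi\asymp\sum_i(\lambda_i^{(2)})^{-2}<\infty\); the unimodular phase does not affect this bound. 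Uniformity in \(i\) of the lowest-branch remainder follows from the trace-level form of Theorem~\ref{all-order-model-summability}. Closed form \eqref{geod-local-profile-zeta} then follows from \(\int_0^\infty e^{-a\xi^2}\xi^3\,d\xi=1/(2a^2)\) with \(\operatorname{Re}a>0\).

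\emph{Weighted-to-unweighted conversion.} This yields \eqref{WeightedGeodesicCLT-limit}. For \eqref{GeodesicCLT-limit} I would apply partial summation to \(\ell e^{-\ell/2}\). The weighted sum behaves like \(e^{T/2}T^{-2}G\), and since \(dL(\ell)\approx\tfrac12e^{\ell/2}\ell^{-2}G\,d\ell\) to leading order, dividing by the weight \(\ell e^{-\ell/2}\) and integrating gives \(\tfrac12 e^TT^{-3}G\). Non-primitive iterates contribute \(O(e^{T/2})\). This step needs the asymptotic for the weighted sum at all \(\ell\le T\) with class \(w^{m_T}\) fixed, that is, with varying ratio \(m_T/\ell\). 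The pointwise limit alone is not enough, because the profile is evaluated at \(c\,T/\ell\).

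\emph{Main obstacle.} The delicate point is this final Tauberian step. I would first try to prove the weighted limit locally uniformly in the ratio \(m/\ell\) on compact sets, using continuity of \(\mathcal G_{\rm Heis}\) and the assumed uniform transfer. Lengths \(\ell\le T-\log^2T\) are negligible at the \(e^T\) scale, and on the remaining window \(T/\ell\to1\), so only a neighbourhood of \(c\) matters.
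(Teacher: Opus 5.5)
Your argument follows the paper's proof: the fixed-class Laplace computation with the extra central-character factor $e^{-2\pi\iu m_Th}$, the rescaling $\xi=\sqrt T\,u_i$ (under which that factor tends to $e^{-2\pi\iu c\xi^2}$), pairing of the two central signs, the Gaussian moment, and the factor $\tfrac12$ from the weighted-to-unweighted conversion.

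The one place you go beyond the paper is the last step, and there you are right. The paper only cites ``the same weighted-to-unweighted conversion used in the fixed-class prime geodesic theorem.'' For a moving class, however, partial summation needs $L(\ell,\Phi,w^{m_T})$ for $\ell\le T$ at the fixed exponent $m_T$, so the limit along $T$ alone is not enough. Your window argument closes this gap, and it needs only the hypothesis at the same $c$.
\begin{itemize}
\item For $\ell\in[T-\log^2T,T]$ one still has $m_T/\ell\to c$. A violating sequence $(\ell_n,m_{T_n})$ is therefore itself an admissible family, so the weighted limit holds uniformly on the window. Continuity of $\mathcal G_{\rm Heis}$ is not needed for this.
\item The lengths $\ell\le T-\log^2T$ contribute $O(e^{T-\log^2T})=o(e^TT^{-3})$, by the unrestricted prime geodesic theorem.
\item Integrating by parts on the window then produces the factor $\tfrac12$.
\end{itemize}

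There is one bookkeeping slip. With your own factors, $h\,dh=2T^{-2}\xi^3\,d\xi$ and $1/(-\iu r_{0,i})\to2$, the two branches together give
$8\operatorname{Re}\sum_i\int_0^\infty e^{-(\lambda_i^{(2)}+2\pi\iu c)\xi^2}\xi^3\,d\xi=2\,\mathcal G_{\rm Heis}(c)$ (times $C_{\rm Pl}$), not $4\operatorname{Re}$. The paper's proof gets $4\operatorname{Re}$ from the Plancherel factor and the pairing alone. It does not track the factor $1/(-\iu r_{0,i})$ that appears in its own \eqref{lefthandside}. State explicitly that this extra $2$ is part of what the normalization $A_{\rm Sel}$ must absorb, rather than counting it as one of the two factors in $4\operatorname{Re}$.
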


\begin{proof}
The calculation is the fixed central class Laplace calculation with one
additional central-character factor.  The spectral side of the normalized
Selberg formula contains the lowest branches \(r_{0,i}(x)\) and the normalized
trace of \(\rho_h(w^{-m_T})\).  In the trivial-neighborhood scaling, the
exponential term gives \(e^{-\lambda_i^{(2)}\xi^2}\), while the central character
gives \(e^{-2\pi\iu c\xi^2}\).  On the positive branch the Plancherel factor
contributes \(2\xi^3d\xi\); pairing the negative branch by complex conjugation
gives the factor \(4\operatorname{Re}\) in
\eqref{geod-local-profile-integral}.  The elementary integral
\[
4\int_0^\infty
 e^{-(\lambda_i^{(2)}+2\pi\iu c)\xi^2}\xi^3d\xi
=2(\lambda_i^{(2)}+2\pi\iu c)^{-2}
\]
gives \eqref{geod-local-profile-zeta}.  The final factor \(1/2\) in
\eqref{GeodesicCLT-limit} is the same weighted-to-unweighted conversion used
in the fixed-class prime geodesic theorem.
\end{proof}

Thus the fixed-class nilpotent Chebotarev constant is the value at the origin
of a central local-limit profile, and the same harmonic oscillator which
controls the heat kernel controls central geodesic fluctuations.

\section*{Appendices}
\addcontentsline{toc}{section}{Appendices}

\appendix

\section{Spectral computations and special functions}
\label{AppendixC}

This appendix collects the analytic tools needed to evaluate, or at
least represent explicitly, the spectral zeta values which occur as
leading constants in the finite-order asymptotic formulas above.

Much of the special-function material in this appendix is adapted, with
changes of notation and normalization, from Appendix~C of
Voros~\cite{Voros}.  In particular, the zero-energy resolvent construction,
its reduction to modified Bessel functions, and the cyclic resolvent formulas
for spectral zeta values follow that source.  The Heisenberg normalization,
the heat-kernel contour formula, and the interpretation of these formulas for
the present covering problems are included here to connect Voros's
calculations with the applications in the main text.

In the long-time heat-kernel asymptotics and in the counting formulas
for prime closed geodesics on nilpotent coverings, the leading
constant is expressed in terms of the spectral zeta function of a
canonical hypoelliptic operator associated with the underlying
nilpotent group.  The value at which this zeta function is evaluated
is determined by the polynomial growth degree of the group.

For general nilpotent groups, the canonical hypoelliptic operators are
multivariable differential operators with polynomial coefficients.
They reflect the stratification and the step of the corresponding Lie
algebra.  Closed-form expressions for their eigenvalues are not
available in general.  Explicit evaluations are therefore exceptional.

We illustrate this point by comparing two basic examples.  For the
three-dimensional Heisenberg group, the model operator reduces to the
harmonic oscillator.  For the Engel group, which is the next simplest
non-Heisenberg nilpotent example, the same construction naturally
leads to a quartic oscillator.  The latter already requires
resolvent-based integral representations.

In the one-dimensional models considered here, the reduced operator
has the form
\[
\widehat H_M
=
-\frac{d^2}{dq^2}+q^{2M},
\]
with
\[
M=1
\quad\text{for the Heisenberg model},
\qquad
M=2
\quad\text{for the Engel model}.
\]

\subsection{The Heisenberg case}

For \(M=1\), the operator
\[
\widehat H_1
=
-\frac{d^2}{dq^2}+q^2
\]
is the harmonic oscillator.  With the normalization
\[
{\rm Spec}(\widehat H_1)=\{2n+1\mid n=0,1,2,\ldots\},
\]
one obtains
\[
\zeta_{\widehat H_1}(2)
=
\sum_{n=0}^{\infty}\frac{1}{(2n+1)^2}.
\]
Thus
\[
\zeta_{\widehat H_1}(2)
=
\sum_{n=1}^{\infty}\frac{1}{n^2}
-
\sum_{n=1}^{\infty}\frac{1}{(2n)^2}
=
\frac34\zeta(2)
=
\frac{\pi^2}{8}.
\]
In the normalization used in the main text, the harmonic oscillator
contains additional scaling factors coming from
\(\|\omega_1\|_{L^2(M)}\), \(\|\omega_2\|_{L^2(M)}\), and \(2\pi\).
After these factors are included, this gives the coefficient appearing
in the Heisenberg heat-kernel and prime-geodesic formulas.

\subsection{The Engel case and the quartic oscillator}

For the Engel group, the relevant one-dimensional model is the quartic
oscillator
\[
\widehat H_2
=
-\frac{d^2}{dq^2}+q^4 .
\]
Unlike the harmonic oscillator, this operator does not admit a
closed-form description of its individual eigenvalues.  The spectral
zeta function
\[
\zeta_{\widehat H_2}(s)
=
\sum_{k=1}^{\infty}\lambda_k^{-s},
\qquad
{\rm Spec}(\widehat H_2)=\{\lambda_k\}_{k\geq1},
\]
is therefore not evaluated by summing an explicit eigenvalue formula.

In the Engel application, the relevant value is
\[
\zeta_{\widehat H_2}(7/2).
\]
It is more natural to express this value by means of the resolvent of
\(\widehat H_2\).  We recall the construction in a slightly more general
form.  The resolvent and Bessel-function calculations in the next
subsections follow Appendix~C of Voros~\cite{Voros}, with the sign convention
adjusted to \(R(E)=(\widehat H_M-E)^{-1}\).

\subsection[Resolvent kernel for anharmonic oscillators]{Resolvent kernel for \( -d^2/dq^2+q^{2M} \)}

Let
\[
\widehat H_M
=
-\frac{d^2}{dq^2}+q^{2M}
\]
on \(L^2(\mathbb R)\), with \(M\geq1\).  Its spectrum is purely
discrete, and the resolvent
\[
R(E)
=
(\widehat H_M-E)^{-1}
\]
has an integral kernel
\[
R(E;q,q')
=
W(E)^{-1}
\psi_-(E;q_<)\psi_+(E;q_>),
\]
where
\[
q_< = \min(q,q'),
\qquad
q_> = \max(q,q'),
\]
\(\psi_+\) and \(\psi_-\) are solutions of
\[
(\widehat H_M-E)\psi=0
\]
which are recessive at \(+\infty\) and \(-\infty\), respectively.  We use the
Wronskian convention
\[
W(E)=\psi_+(E;q)\psi_-'(E;q)-\psi_+'(E;q)\psi_-(E;q),
\]
which is independent of \(q\) and makes the displayed resolvent-kernel sign
consistent with \(R(E)=(\widehat H_M-E)^{-1}\).

At \(E=0\), the equation
\[
-\psi''(q)+q^{2M}\psi(q)=0
\]
can be reduced to a modified Bessel equation.  For \(q>0\), set
\[
z=\frac{q^{M+1}}{M+1},
\qquad
\mu=\frac{1}{2M+2}.
\]
Then a fundamental system is given by
\[
q^{1/2}I_{\mu}(z),
\qquad
q^{1/2}K_{\mu}(z),
\]
up to normalization constants.  The recessive solution at \(+\infty\)
is proportional to
\[
q^{1/2}K_{\mu}\!\left(\frac{q^{M+1}}{M+1}\right).
\]
The corresponding recessive expression on the negative half-line is
proportional to
\[
|q|^{1/2}K_{\mu}\!\left(\frac{|q|^{M+1}}{M+1}\right),
\]
with the parity determined by the chosen global solution.

For reference, we recall the definitions of the Bessel functions used
above; see Erd\'elyi--Magnus--Oberhettinger--Tricomi~\cite{Erdelyi}.  Bessel's equation is
\[
z^2w''+zw'+(z^2-\mu^2)w=0.
\]
For \(\mu\notin\mathbb Z\), two independent solutions are
\(J_\mu(z)\) and \(J_{-\mu}(z)\), where
\[
J_\mu(z)
=
\sum_{m=0}^{\infty}
\frac{(-1)^m}{m!\Gamma(m+\mu+1)}
\left(\frac z2\right)^{2m+\mu}.
\]
Replacing \(z\) by \(iz\) gives the modified Bessel equation
\[
z^2w''+zw'-(z^2+\mu^2)w=0.
\]
The modified Bessel functions are
\[
I_\mu(z)
=
e^{-\frac12 i\mu\pi}
J_\mu(e^{\frac12 i\pi}z),
\]
and
\[
K_\mu(z)
=
\frac{\pi}{2\sin(\pi\mu)}
\left(
I_{-\mu}(z)-I_\mu(z)
\right).
\]

\subsection{Engel heat kernel from the resolvent}
\label{subsec:engel-heat-resolvent}

For the Engel model one has \(M=2\), hence
\[
\widehat H_2=-\frac{d^2}{dq^2}+q^4 .
\]
Let \(K_2(t;q,q')\) denote its heat kernel.  Since \(\widehat H_2\) is a
positive self-adjoint one-dimensional Schr\"odinger operator with compact
resolvent,
\begin{equation}
K_2(t;q,q')=
\sum_{j=0}^{\infty}e^{-t\lambda_j}\phi_j(q)\overline{\phi_j(q')},
\label{engel-heat-spectral}
\end{equation}
where \(\widehat H_2\phi_j=\lambda_j\phi_j\).  Equivalently, by the
Dunford--Taylor functional calculus,
\begin{equation}
K_2(t;q,q')=
-\frac{1}{2\pi\iu}
\int_{\mathcal C}e^{-tE}R(E;q,q')\,dE,
\label{engel-heat-contour}
\end{equation}
where \(\mathcal C\) is a contour surrounding the spectrum and
\(R(E;q,q')\) is the resolvent kernel described above.

Thus the Engel heat kernel is explicit in the following precise sense: it is
represented by the resolvent kernel built from the two recessive solutions of
\((\widehat H_2-E)\psi=0\).  At \(E=0\) these recessive solutions reduce to
modified Bessel functions.  In this case
\[
  z=\frac{q^3}{3},
  \qquad
  \mu=\frac16,
\]
and the recessive solution on the positive half-line is proportional to
\begin{equation}
q^{1/2}K_{1/6}\!\left(\frac{q^3}{3}\right).
\label{engel-bessel-recessive}
\end{equation}
The reflected solution gives the corresponding expression on the negative
half-line.

This is the Engel analogue of the explicit Heisenberg heat-kernel check in
Section~\ref{comparison-to-explicit-formula}.  The difference is that the
Heisenberg harmonic oscillator has an elementary eigenvalue formula, whereas
the Engel quartic oscillator does not.  Therefore the comparison with the
asymptotic coefficient is made at the level of the spectral invariant
\(\zeta_{\widehat H_2}(7/2)\), or equivalently at the level of the resolvent and
heat-kernel representations \eqref{engel-heat-spectral}--\eqref{engel-heat-contour},
rather than by reducing the coefficient to a single elementary number.

\subsection{Resolvent representation of spectral zeta values}

Let
\[
R=\widehat H_M^{-1}
\]
be the resolvent at zero.  For positive integers \(n\), the spectral
zeta values can be written as
\[
\zeta_{\widehat H_M}(n)
=
{\rm Tr}(\widehat H_M^{-n})
=
{\rm Tr}(R^n).
\]
If \(R(q,q')\) denotes the kernel of \(R\), then
\[
\zeta_{\widehat H_M}(n)
=
\int_{\mathbb R^n}
R(q_1,q_2)R(q_2,q_3)\cdots R(q_n,q_1)
\,dq_1\cdots dq_n .
\]
Thus integer spectral zeta values are represented by cyclic integrals
of the resolvent kernel.

For the Engel group, the value required in the main text is
\[
\zeta_{\widehat H_2}(7/2)
=
{\rm Tr}(\widehat H_2^{-7/2})
=
{\rm Tr}(R^{7/2}).
\]
This can be written as
\[
\zeta_{\widehat H_2}(7/2)
=
\int_{\mathbb R^4}
R(q_1,q_2)R(q_2,q_3)R(q_3,q_4)B(q_4,q_1)
\,dq_1\,dq_2\,dq_3\,dq_4,
\]
where \(B(q,q')\) is the integral kernel of
\[
R^{1/2}=\widehat H_2^{-1/2}.
\]
The operator \(R^{1/2}\) is defined by the spectral functional
calculus.  Equivalently, one may use the heat-kernel representation
\[
\widehat H_2^{-1/2}
=
\frac{1}{\sqrt{\pi}}
\int_0^\infty
t^{-1/2}e^{-t\widehat H_2}\,dt,
\]
or any standard convergent functional-calculus expansion for the
positive operator \(R\).  Equivalently, if \(K_M(t;q,q')\) denotes the heat
kernel of \(\widehat H_M\), then the Dunford--Taylor formula gives
\[
K_M(t;q,q')=
-\frac{1}{2\pi i}\int_{\mathcal C}e^{-tE}R(E;q,q')\,dE,
\]
where \(\mathcal C\) is a contour enclosing the positive spectrum in the usual
sectorial functional calculus.  Thus the Engel heat kernel is represented by
the same resolvent kernel whose zero-energy solutions are expressed in terms of
modified Bessel functions.  This is the Engel analogue of the explicit
Heisenberg heat-kernel check.

Consequently, even for the comparatively simple Engel group, the
constant appearing in the asymptotic formulas does not reduce to a
closed expression in elementary eigenvalue data.  Its natural form is
a resolvent-based integral representation.  This is the analytic
framework used for the Engel computations in the main text.

\section{Harmonic theory of Chen's iterated integrals:
homology connections}
\label{Homologyconnection}

This appendix gives a conceptual framework for the iterated-integral
constructions used in the main text.  The Lie-integral approach is
effective for low-step examples, such as the Heisenberg group.  For
general nilpotent groups, however, Chen's theory of iterated integrals
and homology connections gives a more invariant and systematic
language.

The material in this appendix is not needed for following the main
arguments line by line.  Its purpose is to explain the homological
structure behind the flat formal connections and generalized
Abel--Jacobi maps which appear in Sections~\ref{harmoniclietheory}
and~\ref{asymnilpotent}.  We follow the approach of K.-T.~Chen~\cite{Chen,Chen1} and
K.~Kohno~\cite{Kohno}, together with Sullivan's homological viewpoint~\cite{Sullivan}.

\subsection{Iterated integrals on path spaces}
\label{generaliteratedintegrals}

Let \(M\) be a smooth manifold.  For \(x_0,x_1\in M\), let
\[
P(M;x_0,x_1)
=
\{
\gamma:[0,1]\to M
\mid
\gamma \text{ is piecewise smooth},
\ \gamma(0)=x_0,\ \gamma(1)=x_1
\}
\]
be the space of paths from \(x_0\) to \(x_1\).  We also write
\[
PM
=
\{
\gamma:[0,1]\to M
\mid
\gamma \text{ is piecewise smooth}
\}
\]
for the free path space.

Let
\[
{\rm ev}: [0,1]\times PM\longrightarrow M,
\qquad
{\rm ev}(t,\gamma)=\gamma(t),
\]
be the evaluation map.  If \(\omega\in\mathcal A^p(M)\), then
\({\rm ev}^*\omega\) is a \(p\)-form on \([0,1]\times PM\).  By
fiber integration over \([0,1]\), it defines a \((p-1)\)-form on
\(PM\):
\[
\int\omega
:=
\int_{[0,1]}{\rm ev}^*\omega .
\]
More explicitly, if \(\phi:U\to PM\) is a finite-dimensional chart
and
\[
{\rm ev}_U:[0,1]\times U\longrightarrow M,
\qquad
{\rm ev}_U(t,u)=\phi(u)(t),
\]
then we decompose
\[
{\rm ev}_U^*\omega
=
dt\wedge\alpha+\beta .
\]
The local representative of \(\int\omega\) on \(U\) is
\[
\int_0^1\alpha(t,u)\,dt .
\]
These local forms patch together and give a well-defined form on the
path space.

Now let
\[
\omega_1,\ldots,\omega_k
\]
be differential forms on \(M\), with
\(\deg \omega_j=p_j\).  Let
\[
\Delta_k
=
\{0\leq t_1\leq\cdots\leq t_k\leq1\}
\]
be the standard simplex, and define
\[
{\rm ev}_k:\Delta_k\times PM\longrightarrow M^k
\]
by
\[
{\rm ev}_k(t_1,\ldots,t_k;\gamma)
=
(\gamma(t_1),\ldots,\gamma(t_k)).
\]
The iterated integral is
\[
\int \omega_1\cdots\omega_k
:=
\int_{\Delta_k}
{\rm ev}_k^*
(\omega_1\otimes\cdots\otimes\omega_k).
\]
It is a differential form on \(PM\) of degree
\[
p_1+\cdots+p_k-k.
\]
When all \(\omega_j\) are \(1\)-forms, this is a function on the path
space and coincides with the usual Chen iterated integral.

In particular, if
\[
\omega_j(\gamma'(t))=f_j(t),
\]
then
\[
\int_\gamma \omega_1\cdots\omega_k
=
\int_{0\leq t_1\leq\cdots\leq t_k\leq1}
f_k(t_k)\cdots f_1(t_1)\,
dt_1\cdots dt_k ,
\]
with the ordering convention used in the main text.

\subsection{Formal power series connections}

We next recall Chen's formal power series connections.  Let
\(H_+(M,\mathbb R)\) denote the positive-degree homology
\[
H_+(M,\mathbb R)
=
\bigoplus_{q>0}H_q(M,\mathbb R).
\]
Choose a homogeneous basis
\[
z_1,\ldots,z_m
\]
with \(z_j\in H_{q_j}(M,\mathbb R)\).  Let
\[
X_1,\ldots,X_m
\]
be noncommuting variables dual to this basis, and assign degree
\[
\deg X_j=q_j-1.
\]
Let
\[
\mathbb R\langle X_1,\ldots,X_m\rangle
\]
be the tensor algebra generated by the \(X_j\), and let \(J\) be its
augmentation ideal.  We write
\[
\mathbb R\langle\!\langle X_1,\ldots,X_m\rangle\!\rangle
=
\varprojlim_k
\mathbb R\langle X_1,\ldots,X_m\rangle/J^k
\]
for the completed noncommutative formal power series algebra, and
\(\widehat J\) for its completed augmentation ideal.

Let \(\mathcal A^*(M)\) be the de Rham complex.  A formal power
series connection is an element
\[
\omega
\in
\mathcal A^*(M)
\widehat\otimes
\mathbb R\langle\!\langle X_1,\ldots,X_m\rangle\!\rangle
\]
of the form
\begin{equation}
\omega
=
\sum_i\omega_iX_i
+
\sum_{i,j}\omega_{ij}X_iX_j
+
\sum_{i,j,k}\omega_{ijk}X_iX_jX_k
+\cdots .
\label{powerseriesconnection}
\end{equation}
Here the coefficients \(\omega_{i_1\cdots i_r}\) are differential
forms on \(M\).

We extend the exterior derivative by
\[
d(\eta Z)=(d\eta)Z,
\]
where \(\eta\in\mathcal A^*(M)\) and
\(Z\in\mathbb R\langle\!\langle X_1,\ldots,X_m\rangle\!\rangle\).
The wedge product is extended by
\[
(\eta_1Z_1)\wedge(\eta_2Z_2)
=
(\eta_1\wedge\eta_2)Z_1Z_2.
\]
Likewise, iterated integrals are extended by
\[
\int(\eta_1Z_1)(\eta_2Z_2)
=
\left(\int\eta_1\eta_2\right)Z_1Z_2
\]
and linearity.

Let \(\varepsilon\) denote the sign involution defined on homogeneous
forms by
\[
\varepsilon(\eta Z)
=
(-1)^{\deg\eta}\eta Z.
\]
The curvature of a formal power series connection is
\begin{equation}
\kappa
=
d\omega-\varepsilon(\omega)\wedge\omega .
\label{curvatureformal}
\end{equation}
This is Chen's convention for the curvature of a formal connection.

\subsection{Homology connections and Hodge decomposition}

Let
\[
\delta:
\mathbb R\langle\!\langle X_1,\ldots,X_m\rangle\!\rangle
\longrightarrow
\mathbb R\langle\!\langle X_1,\ldots,X_m\rangle\!\rangle
\]
be a derivation of degree \(-1\).  Thus
\[
\delta(uv)=(\delta u)v+(-1)^{\deg u}u(\delta v),
\]
and
\[
\delta X_j\in\widehat J^2.
\]
We extend \(\delta\) to formal differential forms by
\[
\delta(\eta Z)=\eta\,\delta Z .
\]

A pair \((\omega,\delta)\) is called a homology connection if it
satisfies the flatness equation
\begin{equation}
\delta\omega+\kappa=0,
\label{flatcondition}
\end{equation}
together with the degree condition
\[
\deg \omega_{i_1\cdots i_r}
=
\deg(X_{i_1}\cdots X_{i_r})+1.
\]
In other words,
\[
\deg \omega_{i_1\cdots i_r}
=
q_{i_1}+\cdots+q_{i_r}-r+1.
\]

Now suppose that \(M\) is a compact Riemannian manifold.  The Hodge
decomposition gives
\[
\mathcal A^*(M)
=
\mathcal H\oplus B_d\oplus B_{d^*},
\label{decomposition1}
\]
where \(\mathcal H\) is the space of harmonic forms, and \(B_d\),
\(B_{d^*}\) are the images of \(d\) and \(d^*\), respectively.  Put
\[
\mathcal B:=B_{d^*}.
\]
Then
\[
\mathcal A^*(M)
=
\mathcal H\oplus d\mathcal B\oplus\mathcal B .
\label{decomposition2}
\]

The following result is the homological analogue of choosing harmonic
and coexact representatives in Section~\ref{harmoniclietheory}.

\begin{proposition}[Chen--Kohno]
Assume that a decomposition
\[
\mathcal A^*(M)
=
\mathcal H\oplus d\mathcal B\oplus\mathcal B
\]
satisfies the following conditions:
\begin{description}
\item[\rm{(1)}]
Every element of \(\mathcal H\) is closed, and the inclusion
\[
\mathcal H\hookrightarrow \mathcal A^*(M)
\]
induces an isomorphism
\[
\mathcal H\simeq H_{\rm DR}^*(M).
\]

\item[\rm{(2)}]
The space \(\mathcal B\) is graded, and the only closed form in
\(\mathcal B\) is \(0\).
\end{description}
Then there exists a unique homology connection \((\omega,\delta)\)
such that
\[
\omega_i\in\mathcal H
\quad (1\leq i\leq m),
\]
and
\[
\omega_{i_1\cdots i_r}\in\mathcal B
\quad (r\geq2).
\]
\end{proposition}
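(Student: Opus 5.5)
The plan is to build the connection degree by degree in word length $r$, solving the flatness equation $\delta\omega+\kappa=0$ by an induction that alternates between fixing the coefficients of $\delta$ and fixing the coexact coefficients $\omega_{i_1\cdots i_r}$.

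\textbf{Setup.} Write $\omega=\sum_{r\ge1}\omega^{(r)}$, where $\omega^{(r)}$ collects the words of length $r$. Likewise write $\delta X_j=\sum_{r\ge2}\delta^{(r)}X_j$, with $\delta^{(r)}X_j\in J^r/J^{r+1}$. At length one we put $\omega^{(1)}=\sum_i\omega_iX_i$. Here $\{\omega_i\}\subset\mathcal H$ is the basis dual to $\{z_i\}$, which exists by condition (1) and is unique once the dual basis is fixed. The length-one part of $\delta\omega+\kappa$ is $\sum_i d\omega_i X_i=0$, since harmonic forms are closed. The degree condition forces $\deg\omega_i=q_i$, which is consistent.

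\textbf{Inductive step.} Suppose $\omega^{(s)}$ for $s<r$ and $\delta^{(s)}$ for $s<r$ have been constructed so that $\delta\omega+\kappa\equiv0$ modulo $\widehat J^{\,r}$. The length-$r$ component of $\delta\omega+\kappa$ has the form
\[
d\omega^{(r)}-\sum_i\omega_i\,\delta^{(r)}X_i\;+\;\Psi^{(r)}.
\]
Here $\Psi^{(r)}$ is built only from lower-order data: it consists of the terms $-\varepsilon(\omega^{(s)})\wedge\omega^{(r-s)}$ and of $\delta^{(s)}$ applied to $\omega^{(r-s+1)}$ with $s<r$. (Because $\delta$ raises word length by at least one, the piece $\delta^{(r)}$ meets only $\omega^{(1)}$.)

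The key point is that $\Psi^{(r)}$ is closed. I plan to prove this by the standard Bianchi argument: $d\kappa=\varepsilon(\kappa)\wedge\omega-\varepsilon(\omega)\wedge\kappa$, and $d$ commutes with $\delta$ up to sign. The inductive hypothesis $\delta\omega+\kappa\in\widehat J^{\,r}$, together with $\delta^2=0$ in the lower degrees, then makes $d\Psi^{(r)}$ vanish modulo $\widehat J^{\,r+1}$.

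This $\delta^2=0$ requirement is the step I expect to be the main obstacle. One must check that the $\delta^{(r)}$ just chosen, together with the lower ones, still squares to zero through length $r$. I would deduce this by applying $\delta$ and $d$ to the identity at level $r$ and using that $\mathcal H\cap d\mathcal A^*=0$, which isolates $\delta^2X_i$ as the harmonic part of a $d$-exact expression, hence zero. The sign bookkeeping with $\varepsilon$ has to be carried out carefully here.

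Granting closedness, Hodge decomposition splits $\Psi^{(r)}=h^{(r)}+d\beta^{(r)}$ with $h^{(r)}\in\mathcal H$ and $\beta^{(r)}\in\mathcal B$. The harmonic part can only be cancelled by the $\delta$ term, because $d\omega^{(r)}\in d\mathcal B$ and $\mathcal H\cap d\mathcal B=0$. So we are forced to take $\sum_i\omega_i\,\delta^{(r)}X_i=h^{(r)}$. Since $\{\omega_i\}$ is a basis of $\mathcal H$, this determines each $\delta^{(r)}X_i$ uniquely, and it has the correct degree $-1$ by degree counting. The exact part is then cancelled by $\omega^{(r)}=-\beta^{(r)}$. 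This element lies in $\mathcal B$, and it is the unique such choice: the difference of two solutions would be a closed form in $\mathcal B$, hence zero by condition (2). The degree condition on $\omega_{i_1\cdots i_r}$ is inherited from the homogeneity of $\Psi^{(r)}$.

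\textbf{Conclusion.} The induction produces an element of the completed algebra satisfying the flatness equation. Uniqueness follows because every choice made above was forced, so the homology connection is unique with $\omega_i\in\mathcal H$ and $\omega_{i_1\cdots i_r}\in\mathcal B$ for $r\ge2$.
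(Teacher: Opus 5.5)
The paper gives no proof of this proposition. It is quoted from Chen and Kohno, and the cobar-construction sketch later in the appendix concerns the holonomy isomorphism, not this statement. Your word-length induction is correct and is the standard Chen construction behind the cited result: show that the obstruction $\Psi^{(r)}$ is closed, Hodge-split it, absorb the harmonic part into $\delta^{(r)}$ and the exact part into $\omega^{(r)}\in\mathcal B$. Three points should be tightened.

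\textbf{Order of the $\delta^2$ step.} As phrased, you check $\delta^2=0$ ``through length $r$'' after choosing $\delta^{(r)}$, which reads as circular. It is not, once placed before $\delta^{(r)}$ and $\omega^{(r)}$ are chosen. The component $(\delta^2X_i)^{(r)}=\sum_{s+t=r+1,\ s,t\geq2}\delta^{(s)}\delta^{(t)}X_i$ involves only $\delta^{(2)},\dots,\delta^{(r-1)}$. Assume $\Omega=\delta\omega+\kappa\equiv0$ and $(\delta^2X_j)^{(l)}=0$ below length $r$. Then the Bianchi identity for $\Omega$ gives
\[
d\Psi^{(r)}=\pm\sum_i\omega_i\,(\delta^2X_i)^{(r)} .
\]
The left side is exact and the right side lies in $\mathcal H$, so both vanish by injectivity of $\mathcal H\to H^*_{\rm DR}(M)$. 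Closedness of $\Psi^{(r)}$ and $(\delta^2X_i)^{(r)}=0$ therefore come out together. This is exactly your harmonic-versus-exact mechanism, used at the correct stage.

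\textbf{Basis of $\mathcal H^{+}$, not $\mathcal H$.} The forms $\{\omega_i\}$ span only the positive-degree part $\mathcal H^{+}$. What guarantees that the harmonic part $h^{(r)}$ lies in their span is the degree count: the coefficient of $X_J$ in $\Psi^{(r)}$ has form degree $\deg X_J+2\geq2$, so $h^{(r)}$ has no degree-zero component.

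\textbf{Normalization for uniqueness.} Your requirement that $[\omega_i]$ be the basis dual to $\{z_i\}$ is genuinely needed. Without it, $\omega=0$ with any $\delta$ already satisfies $\delta\omega+\kappa=0$. This normalization is only implicit in the paper's definition of a homology connection, so it should be stated explicitly.
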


This proposition generalizes Condition~\ref{harmoniccoexact}.  In the
Heisenberg case, the first-order components are harmonic, while the
central component is chosen to be coexact.

\subsection{Holonomy representation of the fundamental group}

Let \((\omega,\delta)\) be a homology connection, and let
\(\omega_0\) denote its degree-zero part.  This subsection uses Chen's
standard left-ordered convention, so the ordinary curvature below carries the
plus sign.  Then \(\omega_0\) is a
\(1\)-form on \(M\) with coefficients in the completed
noncommutative formal power series algebra.  Its usual curvature is
\[
\kappa_0
=
d\omega_0+\omega_0\wedge\omega_0.
\]

The formal parallel transport along a loop is given by the iterated
integral series
\[
T
=
1+\sum_{r=1}^{\infty}
\int \underbrace{\omega_0\cdots\omega_0}_{r}.
\]
For the based loop space \(\Omega M\), this defines a holonomy map
\begin{align}
\Theta:
C_*(\Omega M)\otimes\mathbb R
&\longrightarrow
\mathbb R\langle\!\langle X_1,\ldots,X_m\rangle\!\rangle,
\notag\\
\Theta(c)
&=
\langle T,c\rangle .
\label{holonomymap}
\end{align}
Chen's fundamental observation is that \(\Theta\) is a chain map.

The zeroth homology \(H_0(\Omega M;\mathbb R)\) has a product induced
by concatenation of loops, and is naturally identified with the group
algebra
\[
\mathbb R\pi_1(M,x_0).
\]
Let
\[
\mathcal N
=
{\rm Im}\left(
\delta:
\mathbb R\langle\!\langle X_1,\ldots,X_m\rangle\!\rangle_1
\to
\mathbb R\langle\!\langle X_1,\ldots,X_m\rangle\!\rangle_0
\right).
\]
Then \(\mathcal N\) is an ideal, and
\[
H_0\left(
\mathbb R\langle\!\langle X_1,\ldots,X_m\rangle\!\rangle,
\delta
\right)
=
\mathbb R\langle\!\langle X_1,\ldots,X_m\rangle\!\rangle/\mathcal N .
\]
Consequently, \(\Theta\) induces a homomorphism
\[
\Theta_0:
\mathbb R\pi_1(M,x_0)
\longrightarrow
\mathbb R\langle\!\langle X_1,\ldots,X_m\rangle\!\rangle/\mathcal N.
\]

Let \(J\) be the augmentation ideal of
\(\mathbb R\pi_1(M,x_0)\), and let \(\widehat J\) be the completed
augmentation ideal of
\(\mathbb R\langle\!\langle X_1,\ldots,X_m\rangle\!\rangle\).
Then Chen's theorem, in Kohno's formulation, says the following.

\begin{theorem}[Chen--Kohno {\rm \cite[Theorem 5.3.1]{Kohno}}]
\label{chenpi1derham}
For every \(k\geq0\), the holonomy homomorphism \(\Theta_0\) induces
an isomorphism of \(\mathbb R\)-algebras
\[
\mathbb R\pi_1(M,x_0)/J^{k+1}
\cong
\mathbb R\langle\!\langle X_1,\ldots,X_m\rangle\!\rangle/
(\mathcal N+\widehat J^{k+1}).
\]
\end{theorem}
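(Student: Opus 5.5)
The plan follows Chen's classical argument, as in Kohno: compare the $J$-adic filtration on $\mathbb R\pi_1(M,x_0)$ with the $\widehat J$-adic filtration on the target, and reduce to a comparison of associated graded pieces.

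\emph{Step 1: $\Theta_0$ is a filtered algebra homomorphism.} Since $\Theta$ is a chain map, $\Theta_0$ is well defined into $\mathbb R\langle\!\langle X\rangle\!\rangle/\mathcal N$. It is multiplicative because of Chen's concatenation identity $T_{\alpha\beta}=T_\alpha T_\beta$, which is the left-ordered analogue of the product formula for Lie integrals recalled in Section~\ref{Lieintegral}. For any loop $\gamma$ we have $T_\gamma-1\in\widehat J$, so $\Theta_0(\gamma-1)\in\widehat J$. Multiplicativity then gives $\Theta_0(J^{k+1})\subset\widehat J^{k+1}$. Hence $\Theta_0$ descends to a homomorphism
\[
\Theta_0^{(k)}:\mathbb R\pi_1/J^{k+1}\to\mathbb R\langle\!\langle X\rangle\!\rangle/(\mathcal N+\widehat J^{k+1}).
\]

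\emph{Step 2: dualize.} Both sides are finite-dimensional. For the left side this uses that $\pi_1(M)$ is finitely generated. For the right side it uses that only degree-zero words survive modulo $\mathcal N$ in degree $0$, and these are finite in number modulo $\widehat J^{k+1}$. So it suffices to prove the dual statement. Pairing with iterated integrals identifies $\operatorname{Hom}(\mathbb R\pi_1/J^{k+1},\mathbb R)$ with the space of homotopy functionals of length $\le k$. The dual of the right side is the degree-$0$ cohomology of the length-$\le k$ part of the reduced bar construction on $\mathcal A^*(M)$. Chen's $\pi_1$-de Rham theorem is exactly the statement that $H^0$ of the length-$\le k$ filtered bar complex equals $(\mathbb R\pi_1/J^{k+1})^*$. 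To use it, I would build the homology connection from the Hodge data $\mathcal H\oplus d\mathcal B\oplus\mathcal B$ of the Chen--Kohno proposition. The coefficients $\omega_{i_1\cdots i_r}$ then identify the bar complex with the dual of $(\mathbb R\langle\!\langle X\rangle\!\rangle,\delta)$, and the pairing $\langle T,c\rangle$ is the iterated-integral pairing.

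\emph{Step 3: induction on $k$ via associated graded.} For $k=0$ both sides are $\mathbb R$. For $k=1$ the map $J/J^2\cong H_1(M;\mathbb R)$ is sent by $\gamma-1\mapsto\sum_i(\int_\gamma\omega_i)X_i$ onto the span of degree-zero $X_i$. This is an isomorphism by de Rham's theorem and condition (1) of the Chen--Kohno proposition. For general $k$, the graded piece $J^k/J^{k+1}$ is generated by products of degree-one elements, and so is $\widehat J^k/(\widehat J^{k+1}+\mathcal N\cap\widehat J^k)$. Surjectivity of $\operatorname{gr}_k\Theta_0$ follows at once. The five lemma applied to
\[
0\to\operatorname{gr}_k\to\cdot/J^{k+1}\to\cdot/J^k\to0
\]
then reduces the claim to injectivity of $\operatorname{gr}_k\Theta_0$, or equivalently to equality of dimensions.

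\emph{Main obstacle: injectivity.} Showing that the graded relations match is the hard step. The relations in $J^k/J^{k+1}$ come from the quadratic dual of the cup product $H^1\otimes H^1\to H^2$ and from higher Massey products. On the other side they are generated by $\delta X_j\in\widehat J^2$ for degree-one generators $X_j$ dual to $H_2$. I would argue dually: a closed length-$\le k$ bar element of degree $0$ yields a homotopy functional, by Chen's theorem that closed iterated integrals are homotopy invariant, and this functional vanishes on $J^{k+1}$. Conversely, every functional on $\mathbb R\pi_1/J^{k+1}$ arises this way. The second assertion requires solving $d\eta=$ (given wedge products) inductively in $\mathcal B$. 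Its solvability is exactly flatness (\ref{flatcondition}) together with condition (2), which guarantees uniqueness of the coexact solution. This is the same mechanism as solving $d\omega_{12}=\omega_1\wedge\omega_2$ in Condition~\ref{harmoniccoexact}, now applied at every length.
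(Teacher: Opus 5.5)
\textbf{Verdict: there is a genuine gap.} The hard half of the theorem, injectivity, is never actually proved.

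Your Steps 1 and 3 handle the easy half correctly. $\Theta_0$ is a filtered algebra map and $\mathcal N\subset\widehat J^2$. Both associated graded algebras are generated in filtration degree one, and $\operatorname{gr}_1\Theta_0$ is an isomorphism by condition (1). Hence every $\operatorname{gr}_k\Theta_0$ is onto.

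The content of the theorem is injectivity: the relations in $J^k/J^{k+1}$ must be exactly those generated by $\delta$. The proposal does not establish this, for three reasons.

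\emph{Circularity in Step 2.} You obtain injectivity by quoting Chen's $\pi_1$-de Rham theorem for the length-filtered bar complex. Modulo a comparison you have not proved, that theorem is the dual form of the very statement at hand. So the argument is circular unless you import Chen's theorem together with an independent proof of it.

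\emph{The comparison is only asserted.} Pairing with the $\omega_{i_1\cdots i_r}$ does not ``identify'' the bar complex of $\mathcal A^*(M)$ with the dual of $(\mathbb R\langle\!\langle X\rangle\!\rangle,\delta)$. At best it gives a chain map, by \eqref{flatcondition}. One must then show this map is an isomorphism on $H^0$ of the length-$\le k$ truncations, via an $E_1$-comparison for the length filtration using condition (1). You do not do this.

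\emph{The last-paragraph mechanism fails.} Flatness and condition (2) concern forms on $M$ only. They give existence and uniqueness of the coexact solutions that build the connection. Condition (2) gives uniqueness, not solvability, and the non-exact harmonic part of each curvature component is exactly what gets pushed into $\delta$. Nothing there involves $\pi_1$, so it cannot show that an arbitrary functional on $\mathbb R\pi_1/J^{k+1}$ is realized by a closed iterated integral of the connection coefficients.

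\emph{What is missing.} You need a chain-level bridge between $\pi_1(M,x_0)$ and $M$ that is compatible with both filtrations. Your Step 3 compares associated graded pieces of $H_0$, which are $E^\infty$ terms, and injectivity cannot be read off there.

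The paper supplies this bridge with Adams' cobar construction:
\begin{enumerate}
\item $\mu:F(C_*)\to C_*(\Omega M)$ is a chain map whose degree-zero homology is $\mathbb R\pi_1(M,x_0)$, and the word-length filtration induces the $J$-adic one.
\item $\Theta\circ\mu$ carries word length into powers of $\widehat J$.
\item At $E^1$ both sides are tensor algebras on the desuspended positive-degree homology of $M$. On the formal side the $E^0$ differential vanishes because $\delta X_j\in\widehat J^2$.
\item The $E^1$ map is pairing with the harmonic $\omega_i$, which is an isomorphism by condition (1). The isomorphism persists to $E^\infty$ and gives the statement for every truncation.
\end{enumerate}

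If you want to keep your dual route, you need both of the following: Chen's bar-construction theorem from an independent source, and the $E_1$-comparison described above. Once you have both, Step 3 is superfluous.
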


Thus the completed group algebra of the fundamental group is described
by iterated integrals and the differential \(\delta\) of the homology
connection.  This is the formal homological version of the
\(\pi_1\)-de Rham theorem used in the main text.

\subsection{Sketch of the proof via the cobar construction}

We recall the main idea of the proof, since Kohno's exposition is not
widely available in English.  The argument goes back to Chen and is
closely related to Adams' cobar construction.

Let \(C_*(M)_{x_0}\) be the chain complex generated by singular
simplices whose vertices all map to the base point \(x_0\).  Let
\(F(C_*)\) be the free associative unital algebra generated by the
positive-dimensional simplices.  If \(\sigma\) is a simplex, we
write \([\sigma]\) for the corresponding generator and define
\[
\deg[\sigma]=\dim\sigma-1.
\]
The algebra \(F(C_*)\) is filtered by word length:
\[
F(C_*)=F_0(C_*)\supset F_{-1}(C_*)\supset F_{-2}(C_*)\supset\cdots,
\]
where \(F_{-k}(C_*)\) is generated by products of at least \(k\)
positive-dimensional simplices.

There is a natural chain map
\[
\mu:F(C_*)\longrightarrow C_*(\Omega M)
\]
constructed as follows.  For a simplex
\[
\sigma:\Delta_q\to M
\]
with all vertices mapped to \(x_0\), choose a cubical family
\[
\theta_q:I^{q-1}\to \mathcal P(\Delta_q;v_0,v_q)
\]
of paths in the simplex from the first to the last vertex, compatible
with faces.  Then
\[
\mu([\sigma])
=
P(\sigma)\circ\theta_q
\]
for \(q>1\), while for \(q=1\) one subtracts the constant loop at
\(x_0\).  The boundary on \(F(C_*)\) is chosen so that \(\mu\) becomes
a chain map.  This is Adams' cobar model for the based loop space.

The composition
\[
\Theta\circ\mu:
F(C_*)\otimes\mathbb R
\longrightarrow
\mathbb R\langle\!\langle X_1,\ldots,X_m\rangle\!\rangle
\]
preserves the filtrations by word length and by powers of the
augmentation ideal.  The induced map on the associated spectral
sequences is an isomorphism from the \(E^1\)-term onward.  Passing to
\(E^\infty\), one obtains the isomorphism in
Theorem~\ref{chenpi1derham}.

\subsection{Example: the Heisenberg nilmanifold}

Let
\[
G={\rm Heis}_3(\mathbb R),\qquad
\Gamma={\rm Heis}_3(\mathbb Z),\qquad
M=\Gamma\backslash G.
\]
Then
\[
H_q(M,\mathbb Z)=
\begin{cases}
\mathbb Z,&q=0,3,\\
\mathbb Z\oplus\mathbb Z,&q=1,2,\\
0,&\text{otherwise}.
\end{cases}
\]
For Chen's standard left convention in this appendix, the left quotient
\(\Gamma\backslash G\) is used; the main text instead uses the right quotient
\(G/\Gamma\).  Take
\[
\omega_1=dx,\qquad \omega_2=dy,\qquad
\omega_{12}=dz-x\,dy.
\]
Then
\begin{equation}
d\omega_{12}=-\omega_1\wedge\omega_2.
\label{vanishingtwo}
\end{equation}
For the right convention of the main text, the central component is instead
\(\Omega_{12}=dz-y\,dx\), and
\(d\Omega_{12}=\Omega_1\wedge\Omega_2\).  Both formulas are correct in their
respective ordering conventions.  In particular, the class of
\(\omega_1\wedge\omega_2\) vanishes in \(H^2_{\rm DR}(M)\).

The second cohomology is generated by
\[
\omega_1\wedge\omega_{12},
\qquad
\omega_2\wedge\omega_{12}.
\]
Let \(X_1,X_2\) be the variables dual to
\(\omega_1,\omega_2\), and let \(Y_1,Y_2\) be the variables dual to
\(\omega_1\wedge\omega_{12}\) and
\(\omega_2\wedge\omega_{12}\), respectively.

The homology connection has the form
\[
\omega
=
\omega_1X_1
+
\omega_2X_2
+
\omega_1\wedge\omega_{12}Y_1
+
\omega_2\wedge\omega_{12}Y_2
+
\omega_{12}[X_1,X_2].
\]
Indeed, the relation~\eqref{vanishingtwo} gives the quadratic
correction
\[
\omega_{12}[X_1,X_2],
\]
and the flatness equation determines
\[
\delta X_1=\delta X_2=0,
\]
\[
\delta Y_1=[[X_1,X_2],X_1],
\qquad
\delta Y_2=[[X_1,X_2],X_2].
\]
Consequently, the completed group algebra of
\(\pi_1(M)=\Gamma\) is described as
\[
\mathbb R\widehat{\pi}_1(M)
\simeq
\mathbb R\langle\!\langle X_1,X_2\rangle\!\rangle/\mathcal N,
\]
where \(\mathcal N\) is the ideal generated by
\[
[[X_1,X_2],X_1],
\qquad
[[X_1,X_2],X_2].
\]

This example shows explicitly how the higher-order components of a
homology connection encode the nilpotent commutator relations.  For a
general stratified nilpotent group, the \(k\)-th order component of
the homology connection is similarly expressed as a finite sum of
forms multiplied by iterated Lie brackets of order \(k\).

\section{The Harper operator as a model example}
\label{Anotherproofwilkinson}

This appendix explains how the exact finite-dimensionalization used in
the main text appears in the classical analysis of the Harper operator.
The purpose is not to give a new theory of the Hofstadter butterfly,
but to show, in the simplest nonabelian example, how the passage
between finite-dimensional magnetic matrices and the Schr\"odinger
representation can be made before taking any semiclassical limit.
This is the analytic counterpart of the Heisenberg finite-dimensionalization
in Step H-1 and the restriction formula in Step H-2 of the companion
foundation paper.

The example is useful for comparison with the arguments in
Sections~\ref{reductionpathintegral}, \ref{sectionasym}, and
\ref{asymnilpotent}.  In the heat-kernel problem, the exact
decomposition is followed by perturbation theory of twisted
operators.  For the Harper operator, the same mechanism leads to the
Wilkinson expansion.

\subsection{The Harper operator and the Heisenberg group}
\label{Hofstadterbutterfly}

The Harper operator
\[
H_\theta:\ell^2(\mathbb Z^2)\longrightarrow \ell^2(\mathbb Z^2)
\]
is defined by
\begin{equation}
(H_\theta u)(m,n)
=
u(m+1,n)+u(m-1,n)
+
e^{\iu m\theta}u(m,n+1)
+
e^{-\iu m\theta}u(m,n-1).
\label{introharperfirst}
\end{equation}
It is the discrete magnetic Laplacian on the square lattice, up to
the usual shift by \(4I\)~\cite{SunadaMagnetic}, and may be regarded as a lattice model for
the Landau Hamiltonian on the plane.

The spectrum of \(H_\theta\), as a function of the magnetic flux
\(\theta\), gives the Hofstadter butterfly.  For rational flux the
spectrum has a band structure, while for irrational flux it is
related to the Ten Martini Problem, solved by Avila and
Jitomirskaya~\cite{Avila}.

The Hofstadter spectrum is the familiar butterfly-shaped subset of
\([0,2\pi]\times[-4,4]\); no external figure is needed for the arguments below.

The relation with the present work is through the discrete Heisenberg
group
\[
\Gamma={\rm Heis}_3(\mathbb Z).
\]
Let \(u,v\) be the standard generators and let \(w=[u,v]\) be the
central generator.  The Cayley graph of \((\Gamma,\{u,v\})\) is a
\(\Gamma\)-covering of the bouquet of two circles.  The intermediate
abelian covering is the square lattice \(\mathbb Z^2\).

The Cayley graph of \((\mathrm{Heis}_3(\mathbb Z),\{u,v\})\) projects to the
square lattice after quotienting by the center.  This covering relation is the
only graph-theoretic fact used here.

Thus the Harper operator appears as the magnetic operator induced on
the intermediate abelian covering.  For rational flux
\[
\theta=2\pi h,\qquad h=\frac pq,
\]
the corresponding finite-dimensional representation of
\({\rm Heis}_3(\mathbb Z)\) has dimension \(q\).  The operator
\(H_\theta\), or equivalently \(H_\theta-4I\), decomposes into
finite-dimensional magnetic matrices associated with the
representations
\[
\rho_{{\rm fin},x},
\qquad
x=(x_1,x_2,x_3)\in\widehat X.
\]
This is precisely the Heisenberg finite-dimensionalization of Step H-1,
together with the restriction formula of Step H-2, in the companion foundation
paper~\cite{KatsudaFB}.  In the present appendix we use this structure only as
an analytic model for the Harper operator.

We use the following normalization convention in this appendix.  The physical
magnetic phase is \(\theta\), while \(h=\theta/(2\pi)\) is the Heisenberg
central character parameter.  After a harmless unitary dilation of the
Schr\"odinger variable, the two basic unitaries may be written in a symmetric
small-parameter form.  Thus the formulas involving \(\sqrt\theta\) and those
involving \(\sqrt h\) differ only by this fixed dilation and the factor
\(2\pi\).

The twisted graph Laplacian and the Harper operator are related by this
intermediate abelian covering and the rational Heisenberg fiber decomposition.

\subsection{The Wilkinson expansion}

We are interested in the semiclassical regime
\[
\theta\to0.
\]
The classical Wilkinson expansion states that the lower spectral
edges of the Harper operator satisfy
\begin{equation}
E_n
=
-4+(2n+1)\theta+O(\theta^2),
\qquad
n=0,1,2,\ldots .
\label{introeigen1}
\end{equation}
This formula was first obtained by Wilkinson~\cite{Wilkinson} using
WKB methods, and related derivations were given by Rammal and
Bellissard~\cite{Rammal}.  A rigorous semiclassical treatment was
given by Helffer and Sj\"ostrand~\cite{Helffer,Helffer01}.

There is a standard formal derivation.  The operator \(H_\theta\) is
unitarily equivalent, near the bottom of the spectrum, to
\begin{equation}
h_\theta
=
-2\cos\left(\sqrt\theta\,\frac{1}{\iu}\frac{d}{ds}\right)
-
2\cos(\sqrt\theta\,s)
\label{introhtheta}
\end{equation}
on \(L^2(\mathbb R)\).  This expression is closely related to the
Schr\"odinger representation of
\({\rm Heis}_3(\mathbb R)\).

A formal Taylor expansion gives
\begin{equation}
h_\theta
=
-4
+
\theta
\left(
-\frac{d^2}{ds^2}+s^2
\right)
+
O(\theta^2).
\label{introformal}
\end{equation}
The coefficient of \(\theta\) is the harmonic oscillator
\[
\widetilde{\mathcal H}
=
-\frac{d^2}{ds^2}+s^2,
\]
whose eigenvalues are \(2n+1\).  This gives
\eqref{introeigen1} formally.

The point is that \eqref{introformal} is not a harmless operator
Taylor expansion: a bounded operator is being approximated by an
unbounded operator.  Moreover, for rational \(\theta\), the spectrum
of \(H_\theta\) consists of finitely many bands, and one must decide
which point in each band is represented by the expansion
\eqref{introeigen1}.  The edges of the Hofstadter spectrum are not
smooth in \(\theta\) at rational points in general.

The exact finite-dimensionalization gives another way to organize the
rational part of the justification.  The expression~\eqref{introhtheta} comes
from the Schr\"odinger representation \(\rho_h\), while for rational
\(h=p/q\) the corresponding Harper operator is decomposed into
finite-dimensional representations \(\rho_{{\rm fin},x}\).  These rational
finite matrices are exact Bloch fibers of the discrete problem.  The
Schr\"odinger model is then used as the smooth large-denominator normal form in
which the harmonic oscillator calculation is transparent.

This is the same philosophy as in the main text: the decomposition used in the
trace identity is exact on the rational finite-dimensional locus, and
asymptotics enter only afterwards in the perturbative analysis of the fiber
operators.

\subsection{Finite-dimensional magnetic matrices at rational flux}

Let
\[
\theta=2\pi x_1,
\qquad
x_1=\frac1q
\]
for simplicity.  The general coprime case \(x_1=p/q\) is obtained by
a relabeling of the cyclic basis.  For
\[
x=(x_1,x_2,x_3)\in\widehat X,
\]
the corresponding \(q\)-dimensional magnetic matrix is
\[
H_x
=
\rho_{{\rm fin},x}(u)+\rho_{{\rm fin},x}(u)^*
+
\rho_{{\rm fin},x}(v)+\rho_{{\rm fin},x}(v)^* .
\]
Explicitly,
\[
H_x
=
\begin{pmatrix}
c_0 & e^{2\pi\iu x_3/q} & 0 & \cdots & 0 & e^{-2\pi\iu x_3/q}\\
e^{-2\pi\iu x_3/q} & c_1 & e^{2\pi\iu x_3/q} & \cdots & 0 & 0\\
0 & e^{-2\pi\iu x_3/q} & c_2 & \ddots & 0 & 0\\
\vdots & \vdots & \ddots & \ddots & e^{2\pi\iu x_3/q} & \vdots\\
0 & 0 & 0 & e^{-2\pi\iu x_3/q} & c_{q-2} & e^{2\pi\iu x_3/q}\\
e^{2\pi\iu x_3/q} & 0 & 0 & \cdots & e^{-2\pi\iu x_3/q} & c_{q-1}
\end{pmatrix},
\]
where
\[
c_k
=
2\cos\left(2\pi\frac{x_2+k}{q}\right),
\qquad
k=0,\ldots,q-1.
\]
The spectrum of \(H_\theta\) at rational flux is given by
\begin{equation}
\sigma(H_\theta)
=
\bigcup_{x_2,x_3\in[0,1]}
\sigma(H_x).
\label{rational-band-decomp}
\end{equation}
This is the usual band decomposition of the Harper operator, but in
the present setting it is interpreted as the finite-dimensional Bloch
decomposition associated with the Heisenberg lattice.

Let
\[
P_x(E)
=
\det(E{\bf 1}_q-H_x).
\label{characteristic}
\]
With the sign-stable convention in~\eqref{characteristic}, Chambers' identity
has the form
\begin{equation}
P_{(x_1,x_2,x_3)}(E)
=
P_{(x_1,0,0)}(E)+4
-2\cos(2\pi x_2)-2\cos(2\pi x_3).
\label{level}
\end{equation}
Equivalently, \(E\) belongs to the spectrum of a fiber precisely when
\[
P_{(x_1,0,0)}(E)+4
=2\bigl(\cos(2\pi x_2)+\cos(2\pi x_3)\bigr).
\]
If instead one writes \(F_x(E)=\det(H_x-EI_q)\), then
\(F_x=(-1)^qP_x\), and every phase-dependent term acquires the same parity
factor.  Omitting it gives the wrong sign for odd \(q\).

Equation~\eqref{level} describes the motion of the spectral bands as
\((x_2,x_3)\) varies.  The following standard Chambers-formula argument is
included only to indicate where the exact finite-dimensional fibers enter the
non-perturbative band-width analysis; the full exponential estimates belong to
the classical Harper/almost-Mathieu literature~\cite{Bellissard} and are not reproved here.  Once
the leading asymptotic location of the roots \(E_i\) is known from the
Wilkinson expansion, the characteristic polynomial
\[
P_{(x_1,0,0)}(E)
=
(E_1-E)\cdots(E_q-E)
\]
shows that
\[
\left|P'_{(x_1,0,0)}(E_i)\right|
=
\prod_{j\neq i}|E_j-E_i|.
\]
Low-level harmonic-oscillator spacing alone does not control the full product
over all \(q-1\) roots, nor does a derivative at one point give a uniform root
localization.  Exponentially small rational band widths require the uniform
Gauss/Chambers-polynomial estimates from the Harper literature, including
simplicity and derivative control on the relevant interval.  The present
appendix records compatibility with those results; it does not derive them
from finite-dimensionalization.

\subsection{Comparison with the Helffer--Sj\"ostrand method}

We recall, very briefly, how the Wilkinson formula is justified in the
semiclassical approach of Helffer and Sj\"ostrand~\cite{Helffer,Helffer01}.
The formal expansion
\[
h_\theta
=
-4+\theta\left(-\frac{d^2}{ds^2}+s^2\right)+O(\theta^2)
\]
suggests the harmonic oscillator and hence the leading term
\(-4+(2n+1)\theta+O(\theta^2)\).  The difficulty is that a bounded operator is
being compared with an unbounded harmonic oscillator, so the Taylor expansion
is not an operator-norm expansion on \(L^2(\mathbb R)\).

A useful way to understand their argument is the following.  If the common
domain of \(h_\theta\) and the harmonic oscillator were exhausted by an
increasing sequence of finite-dimensional subspaces \(V_k\) invariant under
both operators, then the formal Taylor expansion could be justified by first
restricting to \(V_k\) and then passing to the limit.  Such common invariant
subspaces do not exist.  Helffer and Sj\"ostrand replace them by the spaces
\(\widetilde V_k\) spanned by the harmonic-oscillator eigenfunctions with
eigenvalues up to a fixed cutoff.  These spaces are not invariant under
\(h_\theta\), but semiclassical localization shows that the leakage errors are
\(O(h^\infty)\), with \(h=\theta/(2\pi)\), and therefore do not change the
power-series expansion.

This localization is analogous to the familiar double-well picture.  Low-energy
eigenfunctions are concentrated near the bottoms of the wells; their tails are
not compactly supported, but the interaction outside the relevant well is
exponentially small, or \(O(h^\infty)\), and hence is non-perturbative.  For
the Harper symbol
\[
-2\cos(\sqrt\theta\,\xi)-2\cos(\sqrt\theta\,s),
\]
there are infinitely many wells, and the local model near each bottom is the
harmonic oscillator.  Helffer and Sj\"ostrand make this localization argument
precise.

The representation-theoretic viewpoint used here is different.  It does not
replace the Helffer--Sj\"ostrand proof of the Wilkinson expansion.  Rather, it
explains how, at rational flux, the finite-dimensional magnetic matrices arise
as exact Heisenberg fibers before the semiclassical limit is taken, and how
the rational large-denominator regime has the smooth Schr\"odinger normal form
as its limiting model.  In this picture the two basic unitary operators
\[
e^{\sqrt h\,\frac{1}{\iu}\frac{d}{ds}},
\qquad
e^{2\pi\iu\sqrt h\,s}
\]
can be followed separately before being combined into the harmonic oscillator.
This separation is one of the useful features of the representation-theoretic
method and is the model for the higher nilpotent arguments in the main text.

\subsection{Relation with Landau levels}
\label{meshapproximation}

We finally recall how the Wilkinson expansion approximates the Landau
levels of the magnetic Laplacian on the plane.

Let \(\Delta_B\) be the magnetic Laplacian on \(\mathbb R^2\) with
constant magnetic field of flux \(B\).  For \(B\neq0\), the spectrum
consists of the Landau levels, which coincide with the eigenvalues of
a harmonic oscillator up to normalization.

Approximate \(\mathbb R^2\) by the square lattice
\(\delta\mathbb Z^2\) of mesh length \(\delta\).  The flux through one plaquette
is then
\[
\theta=B\delta^2
\]
(up to the same \(2\pi\)-normalization convention as above).  With our sign
convention \(H_\theta\) is the magnetic adjacency operator, and the discrete
magnetic Laplacian is represented near the bottom of the spectrum by
\[
\Delta_{B,\delta}
\simeq
\delta^{-2}(4I-H_\theta).
\]
The lower-edge Wilkinson formula \eqref{introeigen1} for \(H_\theta\) is
equivalently, by the symmetry of the Harper spectrum, the upper-edge formula
\[
E_n^+
=
4-(2n+1)\theta+O(\theta^2).
\]
Therefore
\[
\lambda_n
=
\frac{4-E_n^+}{\delta^2}
=
(2n+1)B+o(1).
\]
Consequently, once the cited uniform Wilkinson expansion is invoked, the
corresponding low-lying lattice levels converge in this mesh limit to the
Landau levels of the magnetic Laplacian on the plane.

The exponentially small band widths at rational flux are consistent
with the infinite degeneracy of the Landau levels in the continuum
limit.


\address{ 
Research and Education Center for Natural Science \\
Hiyoshi Campus, Keio University \\
4-1-1, Hiyoshi, Kohoku-ku \\
 Yokohama 223-8521, Japan \\
\smallskip \\
Osaka Central Advanced Mathematical Institute (OCAMI) \\
MEXT Joint Usage/Research Center on Mathematics and Theoretical Physics, \\
Osaka Metropolitan University \\
3-3-138 Sugimoto, Sumiyoshi-ku \\
Osaka 558-8585, Japan \\
\smallskip \\
Faculty of Mathematics \\
Kyushu University \\
744 Motooka, Nishi-ku,\\
Fukuoka 819-0395, Japan
}
{katsuda@math.kyushu-u.ac.jp}

\end{document}